\DeclareMathOperator{\sign}{sign}
\DeclareMathOperator{\Ran}{Ran}
\DeclareMathOperator{\Dom}{Dom}
\DeclareMathOperator{\Ker}{Ker}
\DeclareMathOperator{\Tr}{Tr}
\DeclareMathOperator{\spec}{spec}
\DeclareMathOperator*{\slim}{s-lim}
\renewcommand\Im{\hbox{{\rm Im}}\,}
\renewcommand\Re{\hbox{{\rm Re}}\,}
\newcommand{\abs}[1]{\lvert#1\rvert}
\newcommand{\Abs}[1]{\left\lvert#1\right\rvert}
\newcommand{\norm}[1]{\lVert#1\rVert}
\newcommand{\Norm}[1]{\left\lVert#1\right\rVert}
\newcommand{\R}{{\mathbb R}}
\newcommand{\C}{{\mathbb C}}
\newcommand{\calH}{{\mathcal H}}
\newcommand{\calK}{{\mathcal K}}
\newcommand{\calF}{\mathcal{F}}
\newcommand{\calN}{\mathcal{N}}
\newcommand{\calC}{\mathcal{C}}
\numberwithin{equation}{section}
\theoremstyle{plain}
\newtheorem{theorem}{\bf Theorem}[section]
\newtheorem{lemma}[theorem]{\bf Lemma}
\newtheorem{proposition}[theorem]{\bf Proposition}
\newtheorem{assumption}[theorem]{\bf Assumption}
\theoremstyle{definition}
\newtheorem{definition}[theorem]{\bf Definition}
\theoremstyle{remark}
\newtheorem*{remark*}{\bf Remark}
\newcommand{\wt}{\widetilde}
\begin{document}

\title[Discontinuous functions of self-adjoint operators]{Spectral theory of 
discontinuous functions of self-adjoint operators  and scattering theory}

\author{Alexander Pushnitski}
\address{Department of Mathematics,
King's College London, 
Strand, London, WC2R~2LS, U.K.}
\email{alexander.pushnitski@kcl.ac.uk}

\author{Dmitri Yafaev}
\address{Department of Mathematics, University of Rennes-1, 
Campus Beaulieu, 35042, Rennes, France}
\email{yafaev@univ-rennes1.fr}

\dedicatory{To the memory of M.~Sh.~Birman (1928--2009)}

\begin{abstract}
In the smooth scattering theory framework, we consider a pair of self-adjoint operators
$H_0$, $H$ and discuss the spectral projections of these operators corresponding 
to the interval $(-\infty,\lambda)$.
The purpose of the paper is to study the  spectral properties of the 
difference $D(\lambda)$ of these spectral projections. 
We completely describe the absolutely continuous 
spectrum of the operator $D(\lambda)$
in terms of the eigenvalues of the scattering matrix $S(\lambda)$  
for the  operators $H_{0}$ and $H$. We also prove that the singular continuous 
spectrum of the operator $D(\lambda)$ is empty.
\end{abstract}

\subjclass[2000]{Primary 47A40; Secondary 47B25}

\keywords{Scattering matrix, Carleman operator, absolutely continuous spectrum, spectral projections}

\maketitle

\section{Introduction}\label{sec.a}

 
Let $H_0$ and $H$ be self-adjoint operators in a Hilbert space $\calH$  and suppose that the difference 
$V=H-H_0$ is a compact operator.  
If $\varphi:\R\to\R$ is a continuous function which tends to zero at infinity then a well known 
simple argument shows that the difference
\begin{equation}
\varphi(H)-\varphi(H_0)
\label{a1}
\end{equation}
is compact. On the other hand, if $\varphi$ has discontinuities on the 
essential spectrum of $H_0$ and  $H$, then the difference
\eqref{a1} may fail to be compact; see \cite{Krein,KM}.

The simplest example of a function $\varphi$ with a discontinuity
is the characteristic function of a semi-axis. Thus, for a Borel  set $\Lambda\subset\R$ 
we denote by $E_0(\Lambda)$ (resp. $E(\Lambda)$) the spectral projection
of $H_0$ (resp. $H$) corresponding to the set $\Lambda$
and consider the difference
\begin{equation}
D(\lambda)=E((-\infty,\lambda))-E_0((-\infty,\lambda))
\label{a2}
\end{equation}
where $\lambda$ belongs to the absolutely continuous (a.c.) spectrum of $H_0$.

In  \cite{Krein}, M.~G.~Kre\u{\i}n has shown that
under some assumptions of the trace class type on the pair $H_0$ and $H$,
the operator $\varphi(H)-\varphi(H_0)$ belongs to the trace class 
for all sufficiently ``nice'' functions $\varphi$ and 
$$
\Tr(\varphi(H)-\varphi(H_0))=\int_{-\infty}^\infty \varphi'(t) \xi(t) dt,
$$
where the function $\xi(\cdot)=\xi(\cdot; H,H_{0})$ is known as the spectral shift function. 
Formally taking   the characteristic function $\chi_{(-\infty,\lambda)}$ of the interval 
$(-\infty,\lambda)$ for $\varphi$, we obtain the relation
\begin{equation}
\xi(\lambda)=-\text{``$\Tr$''} D(\lambda)
\label{a10}
\end{equation}
where ``$\Tr$'' is the regularized trace.

The relation between the spectral shift function and 
the   scattering matrix $S(\lambda)=S(\lambda;H,H_0)$
for the pair $H_0$, $H$ was found in the paper 
\cite{BK} by M.~Sh.~Birman and M.~G.~Kre\u{\i}n, 
where it was shown that 
\begin{equation}
\det S(\lambda)=e^{-2\pi i\xi(\lambda)}
\label{a11}
\end{equation}
for a.e. $\lambda$ from the core   of the  a.c.  spectrum of $H_0$
(see e.g.  \cite[Section 1.3]{Yafaev} for the discussion of the notion of the core). 
The importance  of \eqref{a10}, \eqref{a11} is in the fact that they give a relation between  
the key object of  \emph{spectral perturbation theory} $D(\lambda)$ 
and  the key object of \emph{scattering theory} $S(\lambda)$.

Our aim here is to discuss the spectral properties of 
$D(\lambda)$. It turns out that  \eqref{a10}, \eqref{a11}
is not the only link between 
$D(\lambda)$ and $S(\lambda)$. 
In fact, 
the spectral properties of 
$D(\lambda)$ can be completely described in terms of 
the eigenvalues $ e^{i\theta_n(\lambda)}$, $n=1,\ldots, N$, 
$N\leq \infty$, distinct from $1$  of the  scattering matrix $S(\lambda)$. We show that the 
a.c. spectrum of $D(\lambda)$ consists of the union of the intervals
\begin{equation}
\bigcup_{n=1}^{N}[-\varkappa_n(\lambda),\varkappa_n(\lambda)],
\qquad
\varkappa_n(\lambda)=\abs{e^{i\theta_n(\lambda)}-1}/2,
\label{a0}
\end{equation}
where each interval has the multiplicity one in the 
spectrum. We also prove that the singular continuous spectrum 
of $D(\lambda)$ is empty, the eigenvalues of $D(\lambda)$ can accumulate
only to $0$ and to the points $\pm\varkappa_n(\lambda)$, and 
all eigenvalues of $D(\lambda)$ distinct from $0$ and
$\pm\varkappa_n(\lambda)$ have finite multiplicity.
In particular, $D(\lambda)$ is compact if and only if $S(\lambda)=I$.
On the other hand,   the a.c. spectrum 
of $D(\lambda)$ covers the interval $[-1,1]$   if and only if 
the spectrum of $S(\lambda)$ contains $-1$. 

The present paper can be considered as a continuation of 
\cite{Push}, where the description \eqref{a0} of the 
a.c. spectrum of $D(\lambda)$ was obtained 
using a combination of assumptions of trace class and smooth scattering theory. 
In contrast to 
\cite{Push}, here we use only the technique of smooth scattering 
theory, which yields stronger results.

Our ``model" operator is constructed in terms of a certain Hankel  
integral operator with kernel \eqref{b0a} which we call the 
``half-Carleman'' operator and of the scattering matrix. 
Using the  explicit diagonalization of the half-Carleman operator, 
given by the Mehler-Fock transform (see Section~\ref{sec.c1}),  
we find a class of operators smooth with respect to the ``half-Carleman'' operator. 
This allows us to develop scattering theory for the pair 
consisting of the model operator and the operator $D(\lambda)^2$.
To a certain extent, we were inspired by J.~S.~Howland's 
papers \cite{Howland} where the smooth version of scattering theory was 
developed via the Mourre commutator method.

There is a close relationship between the properties of the difference 
$\varphi(H)-\varphi(H_0)$ and the theory of Hankel operators. 
This fact was exhibited in the work \cite{Peller1} by V.~Peller. 
The problem discussed in this paper gives another example of this relationship.

When this paper was at the final stage of preparation, 
the authors have learnt that their teacher M.~Sh.~Birman has passed away. Much of the modern spectral and scattering theory is Birman's legacy. We dedicate this work to his memory.


\section{Main results} 

\subsection{Definition of the operator  $H$}\label{sec.a2}
Let $H_0$ be a self-adjoint operator in a Hilbert space $\calH$, and let $V$ be 
a symmetric operator which we consider as a perturbation of $H_0$. 
Our first goal is to correctly define the sum $H=H_0+V$.
Following the approach which goes back at least to \cite{Kato2} and is
developed in more detail in  \cite[Sections 1.9, 1.10]{Yafaev}, 
below we define the operator $H$ in terms of its resolvent.
If $V$ is bounded, then the   operator $H$ we define coincides with the 
operator sum $H_0+V$. 
In the semi-bounded case the   operator $H$ can be defined via its quadratic form.

 We suppose that $V$ is factorized as $V=G^*JG$, 
where $G$ is an operator from $\calH$ to an auxiliary
Hilbert space $\calK$ and $J$ is an operator in $\calK$.  
We assume that 
\begin{equation}
\begin{split}
&J=J^* \text{ is bounded in $\calK$, } 
\\
\Dom \abs{H_0}^{1/2} \subset& \Dom G \quad 
\text{ and } \quad
G(\abs{H_0}+I)^{-1/2}\text { is compact.}
\end{split}
\label{a3}
\end{equation}
In applications such a  factorization
often arises naturally from the structure of the problem.
In any case, one can always take
$\calK=\calH$, $G=\abs{V}^{1/2}$ and $J=\sign(V)$. 

Let us accept

\begin{definition}\label{def1}
A self-adjoint operator $H$ corresponds to the sum $H_{0}+V$ if the following two conditions are satisfied: 

\begin{enumerate}[\rm (i)]

\item
For any regular point $z\in\C\setminus \spec(H )$, its resolvent $R(z) =(H-zI)^{-1}$ admits the representation
\begin{equation}
R(z)= (\abs{H_0}+I)^{-1/2}B (z)(\abs{H_0}+I)^{-1/2}
\label{a9} 
\end{equation}
where the operator $B (z)$ is bounded. 
In particular, $\Dom H\subset \Dom \abs{H_{0}}^{1/2}$. 

\item
One has 
$$
(f_0,Hf)=(H_0f_0,f)+(JGf_0,Gf), 
\quad \forall f_0\in\Dom H_0,
\quad \forall f\in\Dom H.
$$
\end{enumerate}
\end{definition}

Only one self-adjoint operator $H$ can satisfy this definition, 
and under the assumption \eqref{a3} such an operator exists 
and is defined below via its resolvent. 
For $z\in\C\setminus \spec(H_0)$, let us denote
$R_0(z)=(H_0-zI)^{-1}$. Formally, we  define the operator $T(z)$
(sandwiched resolvent) by  
\begin{equation}
T(z)=GR_0(z)G^*;
\label{a6a}
\end{equation}
more precisely, this means
$$
T(z)=(G(\abs{H_0}+I)^{-1/2}) (\abs{H_0}+I)R_0(z)(G(\abs{H_0}+I)^{-1/2})^*.
$$
By \eqref{a3}, the operator $T(z)$ is  compact.
Under the assumption \eqref{a3}, it can be shown (see \cite[Sections 1.9,1.10]{Yafaev})
that the operator $I+T(z)J$ has a bounded inverse for all $z\in\C\setminus\R$ and
\begin{equation}
R(z)=R_0(z)-(G R_0(\overline z))^* J (I+T(z)J)^{-1} GR_0(z)
\label{a7}
\end{equation}
is the resolvent of a self-adjoint operator $H$ which satisfies   Definition~\ref{def1}.
Of course the resolvents of $H_{0}$ and $H$ are related by the usual identity
\begin{equation}
R(z)-R_0(z)=-(GR_0(\overline z))^*JGR(z).
\label{res}
\end{equation}

If $H_0$ is semi-bounded 
from below, then \eqref{a3} means that $V$ is 
$H_0$-form compact, and then 
$H$ coincides with the operator 
$H_0+V$ defined as a quadratic form sum
(see the KLMN Theorem in  \cite{RS2}).

\subsection{Scattering Theory}\label{sec.a3}

Recall that, for a pair of self-adjoint operators $H_{0}$ and $H$ and a Borel set 
${\Lambda}\subset{\mathbb R}$, the (local) wave operators are introduced by the relation
\[
W_{\pm} (H,H_{0};\Lambda)=\slim_{t\to \pm \infty}e^{iHt}e^{-iH_{0}t}E_{0}(\Lambda) P_{0}^{(a)}
\]
provided these strong limits exist. 
Here and in what follows we denote by  $P_{0}^{(a)}$ (resp. $P^{(a)}$) the orthogonal projection
onto the absolutely continuous subspace   of $H_{0}$ (resp. $H$). 
The wave operators enjoy the intertwining property 
$W_{\pm} (H,H_{0};\Lambda) H_{0}= H W_{\pm} (H,H_{0};\Lambda)$. 
The wave operators are called complete if 
\[
\Ran W_{+} (H,H_{0};\Lambda)
=
\Ran W_{-} (H,H_{0};\Lambda)
=
\Ran \big(E (\Lambda) P^{(a)}\big).
\]
If $\Lambda= {\mathbb R}$, then $\Lambda$ is omitted from the notation.

We fix a compact interval $\Delta\subset\R$ and assume that 
the spectrum of $H_0$ in $\Delta$ is purely a.c.
with a constant multiplicity $N_{0}\leq\infty$. 
The interior of $\Delta$ is denoted by $ {\rm int}(\Delta)$. We make an assumption typical for smooth scattering theory; 
in the terminology of \cite{Yafaev}, we
assume that $G$ is strongly $H_0$-smooth 
on $\Delta$ with some exponent $\alpha\in (0, 1]$.
This means the following. 
Let $\calF$ be a unitary operator from $\Ran E_0(\Delta)$ to 
$L^2(\Delta,\calN)$, $\dim\calN=N_{0}$, such that $\calF$
diagonalizes $H_0$: if $f\in\Ran E_0(\Delta)$ then 
\begin{equation}
(\calF  H_0 f)(\lambda)
=
\lambda (\calF f)(\lambda), \quad \lambda\in\Delta.
\label{a8}
\end{equation}
The strong $H_0$-smoothness of  $G$ on the interval $\Delta$ means that the operator 
\[
G_\Delta\overset{\rm def}{=}GE_0(\Delta) : \Ran E_0(\Delta) \to \calK
\] 
satisfies the equation 
\begin{equation}
(\calF G_\Delta^*\psi)(\lambda)
=
Z(\lambda)\psi, 
\quad 
\forall \psi\in\calK,
\quad
\lambda\in\Delta,
\label{a3b}
\end{equation}
where $Z=Z(\lambda):\calK\to\calN$
is a family of compact operators obeying
\begin{equation}
\norm{Z(\lambda)}\leq C,
\quad
\norm{Z(\lambda)-Z(\lambda')}\leq C\abs{\lambda-\lambda'}^\alpha,
\quad
\lambda, \lambda'\in\Delta.
\label{a3a}
\end{equation}
Note that the notion of strong smoothness is not unitary invariant,
as it depends on the choice of the map $\calF$. It follows from \eqref{a3b} that the adjoint operator $  G_\Delta \calF^* : L^2(\Delta,\calN)\to \calK$ acts by the formula
\begin{equation}
G_\Delta \calF^* f
=
\int_{\Delta}Z(\lambda)^* f(\lambda) d\lambda.
\label{a3B}\end{equation}

Let us summarize our assumptions:

\begin{assumption}\label{as1}
\begin{enumerate}[\rm (A)]
\item
$H_0$ has a purely a.c. spectrum with the multiplicity $N_0$ on $\Delta$.
\item
$V$ admits a factorization $V=G^*JG$ with the operators $G$ and $J$ satisfying \eqref{a3}.
\item
$G$ is strongly $H_0$-smooth on $\Delta$. 
\end{enumerate}
\end{assumption}

We need the following well known results (see e.g. \cite[Section~4.4]{Yafaev}).

\begin{proposition}\label{pr1}
Let Assumption~\ref{as1} hold. 
Then the operator-valued function $T(z)$   
defined by \eqref{a6a}
is   
H\"older continuous for $\Re z\in {\rm int}(\Delta)$, $\Im z\geq 0$. 
The set $\mathcal{ X} \subset\Delta$ where the equation 
$f+T(\lambda+ i0)Jf=0$ has a nontrivial solution is closed 
and has the Lebesgue measure zero.
The operator $I+T(\lambda+i0)J$ is invertible for all 
$\lambda\in\Omega\overset{\rm def}{=}  {\rm int}(\Delta)\setminus \mathcal{ X}$.
\end{proposition}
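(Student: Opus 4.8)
The plan is to split $T(z)$ into a piece that is holomorphic up to the real axis and a Cauchy-type integral controlled by the strong smoothness of $G$, establish H\"older continuity of both, and then analyze the invertibility of $I+T(\lambda+i0)J$ by combining the Fredholm alternative with a boundary uniqueness argument for an associated scalar determinant.

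For the H\"older continuity, I would use that $E_0(\Delta)$ commutes with $R_0(z)$ and write
\[
T(z)=G_\Delta R_0(z)G_\Delta^*+GR_0(z)E_0(\R\setminus\Delta)G^*.
\]
For $\Re z\in{\rm int}(\Delta)$ and $\Im z\ge 0$ the second term is holomorphic: after inserting the weights $(\abs{H_0}+I)^{\pm1/2}$ supplied by \eqref{a3}, the spectral integral $\int_{\R\setminus\Delta}(\abs{t}+1)(t-z)^{-1}\,dE_0(t)$ has a bounded analytic integrand because $\abs{t-z}\ge{\rm dist}(\Re z,\partial\Delta)>0$ on the support. For the first term the diagonalization \eqref{a3b}--\eqref{a3B} yields the explicit representation
\[
G_\Delta R_0(z)G_\Delta^*=\int_\Delta\frac{Z(\lambda)^*Z(\lambda)}{\lambda-z}\,d\lambda,
\]
a Cauchy integral whose density $Z(\lambda)^*Z(\lambda)$ is bounded and H\"older of exponent $\alpha$ by \eqref{a3a}. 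The classical Privalov--Muskhelishvili estimates for Cauchy integrals with H\"older densities, which carry over verbatim to the operator-valued (here compact-valued) case, then give H\"older continuity up to each compact subinterval of ${\rm int}(\Delta)$; restricting to the interior is exactly what avoids the endpoint logarithmic effects. In particular $T(\lambda+i0)$ exists in operator norm, and as a norm limit of compact operators it is compact.

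Since $T(\lambda+i0)J$ is then compact, $I+T(\lambda+i0)J$ is Fredholm of index zero, so it is invertible precisely when it is injective; hence $\mathcal X$ is exactly the set where $I+T(\lambda+i0)J$ fails to be invertible, and the final invertibility assertion on $\Omega$ is automatic. Because $\lambda\mapsto T(\lambda+i0)$ is norm-continuous by the previous paragraph and the invertible operators form an open set, the complement of $\mathcal X$ in ${\rm int}(\Delta)$ is open; this is the sense in which $\mathcal X$ is (relatively) closed and $\Omega$ is open.

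The substantive step is $\abs{\mathcal X}=0$. Here I would exploit that $I+T(z)J$ is invertible for every $z$ with $\Im z>0$ (part of the resolvent construction preceding \eqref{a7}), is analytic there, and is continuous up to the boundary. Fix a compact $\delta\subset{\rm int}(\Delta)$ and a thin closed strip $\{\Re z\in\delta,\ 0\le\Im z\le\varepsilon\}$; since $T(z)J$ is norm-continuous and compact-valued on this compact set, a covering argument produces a single finite-rank projection $P$ with $\norm{(I-P)T(z)J}<1$ throughout. Setting $A(z)=PT(z)J$ (finite rank, analytic) and $B(z)=(I-P)T(z)J$ (norm $<1$), one has $I+T(z)J=(I+B(z))\bigl(I+(I+B(z))^{-1}A(z)\bigr)$ with $I+B(z)$ always invertible, so the scalar function
\[
d(z)=\det\bigl(I+(I+B(z))^{-1}A(z)\bigr)
\]
is a genuine finite-dimensional determinant, analytic for $\Im z>0$, continuous up to $\delta$, nonzero for $\Im z>0$, and vanishing at $\lambda\in\delta$ exactly when $\lambda\in\mathcal X$. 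Privalov's boundary uniqueness theorem for bounded analytic functions then forces the boundary zero set of the nontrivial $d$ to have measure zero, and exhausting ${\rm int}(\Delta)$ by such $\delta$ gives $\abs{\mathcal X}=0$. The one genuinely delicate point is precisely this reduction: the merely compact (not trace class) nature of $T(z)J$ rules out a single global regularized determinant, which is why the finite-rank splitting, made uniform on a closed strip, must be carried out before Privalov's theorem can be invoked.
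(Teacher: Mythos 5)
The paper offers no proof of this proposition — it is quoted as a known result from \cite[Section 4.4]{Yafaev} — and your argument is a correct reconstruction of the standard proof given there: the splitting of $T(z)$ into a part analytic across ${\rm int}(\Delta)$ plus a Cauchy integral over $\Delta$ with the bounded, H\"older, operator-valued density $Z(\lambda)^*Z(\lambda)$ handled by Privalov's estimates, followed by the Fredholm alternative for the compact operator $T(\lambda+i0)J$ and the finite-rank/determinant reduction combined with the Luzin--Privalov boundary uniqueness theorem for the measure-zero claim. The only details worth writing out explicitly are that the projections $P_{z_j}$ in the covering argument should be taken orthogonal with $\Ran P\supset\Ran P_{z_j}$, so that $\norm{(I-P)T(z)J}\le\norm{(I-P_{z_j})T(z)J}<1$ uniformly on the closed strip, and that $d(z)$ is most cleanly exhibited as an analytic $n\times n$ determinant via $\det(I+XY)=\det(I+YX)$, namely $d(z)=\det\bigl(I+PT(z)J(I+B(z))^{-1}P\bigr)$ computed on $\Ran P$.
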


\begin{proposition}\label{pr2}
Let Assumption~\ref{as1} hold. Then the local wave operators
$W_\pm(H,\\ H_0;\Delta)$ exist and are complete. Moreover, the spectrum of $H$ in $\Omega$ 
is purely absolutely continuous.
\end{proposition}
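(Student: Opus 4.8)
The plan is to place the statement inside Kato's theory of smooth perturbations and to derive both assertions from the $H_0$- and $H$-smoothness of the single factor $G$. First I would extract the ``unperturbed'' half of the scheme from strong $H_0$-smoothness. For $f\in\Ran E_0(\Delta)$, formula \eqref{a3B} gives $GR_0(z)E_0(\Delta)f=\int_\Delta Z(\mu)^*(\mu-z)^{-1}(\calF f)(\mu)\,d\mu$, so the H\"older bounds \eqref{a3a} and the Privalov--Plemelj theory yield bounded, H\"older continuous boundary values $GR_0(\lambda\pm i0)E_0(\Delta)$ for $\lambda\in\interior(\Delta)$; the complementary piece $GR_0(\lambda\pm i0)E_0(\R\setminus\Delta)$ is regular near $\interior(\Delta)$ because there the relevant distance stays positive and $G(\abs{H_0}+I)^{-1/2}$ is bounded. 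Hence $GR_0(\lambda\pm i0)$ has locally bounded boundary values on $\interior(\Delta)$, which is the (Kato) $H_0$-smoothness of $G$ there, consistent with the H\"older continuity of $T(z)=GR_0(z)G^*$ in Proposition~\ref{pr1}.

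The core step is to transfer this smoothness to $H$. Substituting \eqref{a6a} and $(GR_0(\bar z))^*=R_0(z)G^*$ into \eqref{a7} (equivalently, iterating \eqref{res}) and simplifying $I-T(z)J(I+T(z)J)^{-1}=(I+T(z)J)^{-1}$, I obtain
\begin{equation}
GR(z)=(I+T(z)J)^{-1}GR_0(z),\qquad \Im z\neq 0.
\label{planGR}
\end{equation}
By Proposition~\ref{pr1}, $I+T(\lambda+i0)J$ is boundedly invertible for $\lambda\in\Omega=\interior(\Delta)\setminus\mathcal{X}$, and its inverse stays locally bounded there because $T$ is H\"older up to the axis. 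Feeding the boundary values of $GR_0(\lambda\pm i0)$ into \eqref{planGR}, I conclude that $GR(\lambda\pm i0)$ exists and is locally bounded on $\Omega$; this is precisely the $H$-smoothness of $G$ on every compact $K\subset\Omega$.

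With $G$ being $H_0$-smooth on $\Delta$ and $H$-smooth on each compact $K\subset\Omega$, and with $V=G^*JG$, the existence and completeness of $W_\pm(H,H_0;\Delta)$ follow from the standard smooth perturbation theorem (\cite[Section~4.4]{Yafaev}) applied over an exhaustion of $\Omega$ by compacts; here $\abs{\mathcal{X}}=0$ ensures $E_0(\Delta)P_0^{(a)}=E_0(\Omega)P_0^{(a)}$, so replacing $\Delta$ by $\Omega$ loses nothing. For the final assertion I would invoke the limiting absorption principle: sandwiching \eqref{a7} by $(\abs{H_0}+I)^{1/2}$ and using the boundary values of $R_0(\lambda\pm i0)$, of $GR_0(\lambda\pm i0)$ and of $(I+T(\lambda+i0)J)^{-1}$ shows that the operator $B(\lambda\pm i0)$ in \eqref{a9} exists and is locally bounded on $\Omega$. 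Then for $u=(\abs{H_0}+I)^{-1/2}v$ the measure $d(u,E(\cdot)u)$ has a locally bounded density on $\Omega$, and since such $u$ are dense the spectrum of $H$ in $\Omega$ is purely absolutely continuous.

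The main obstacle I anticipate is the bookkeeping imposed by the exceptional set $\mathcal{X}$: since $\Omega$ is only open and $\mathcal{X}$ need not be finite, the invertibility of $I+T(\lambda+i0)J$ and the ensuing smoothness bounds are merely local, so each conclusion must be proved on compacts $K\subset\Omega$ and then patched, relying on $\abs{\mathcal{X}}=0$. The one genuinely analytic point, as opposed to a citation, is the passage from local boundedness of the boundary values in \eqref{planGR} and \eqref{a9} to the \emph{pure} absolute continuity of $H$ on $\Omega$, i.e.\ the simultaneous exclusion of embedded eigenvalues and singular continuous spectrum throughout $\Omega$.
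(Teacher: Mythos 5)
The first half of your argument (existence and completeness of $W_\pm(H,H_0;\Delta)$) is in substance the standard smooth-method proof that the paper itself delegates to \cite[Section~4.4]{Yafaev}: the identity $GR(z)=(I+T(z)J)^{-1}GR_0(z)$, the resulting local Kato smoothness of $G$ with respect to both $H_0$ and $H$ on compacts of $\Omega$, and the patching over $\Omega$ using the fact that $\mathcal{X}$ has measure zero. One caveat even here: Kato $H_0$-smoothness of $G$ is \emph{not} the existence of bounded boundary values of $GR_0(\lambda\pm i0)$ --- those limits do not exist in the operator norm, since $(\calF f)(\mu)$ is merely an $L^2$ function of $\mu$ and evaluation of a Cauchy integral of an $L^2$ density at a boundary point is not a bounded functional. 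What you actually need, and what Proposition~\ref{pr1} supplies, is boundedness of $T(z)=GR_0(z)G^*$ up to the axis, which gives smoothness via $\abs{\Im z}\,\norm{GR_0(z)f}^2\leq\norm{T(z)}\,\norm{f}^2$.

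The genuine gap is in your final step. The operator $B(\lambda\pm i0)=(\abs{H_0}+I)^{1/2}R(\lambda\pm i0)(\abs{H_0}+I)^{1/2}$ does \emph{not} exist as a bounded operator for $\lambda$ in the a.c.\ spectrum: already for $V=0$ one has $B(z)=(\abs{H_0}+I)R_0(z)$, whose norm blows up like $\abs{\Im z}^{-1}$ there. The weight $(\abs{H_0}+I)^{-1/2}$ controls relative form-boundedness of $V$, not regularity in the spectral variable, so it cannot serve as a limiting-absorption weight, and the dense set $\Ran(\abs{H_0}+I)^{-1/2}$ is not ``smooth enough'' for your argument. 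Note also that the standard smoothness conclusion only yields absolute continuity on $\overline{\Ran G^*}$, which is why the usual formulation assumes $\Ker G=\{0\}$ --- precisely the point the paper flags and resolves separately in Lemma~A.1. The correct repair, which is the paper's Lemma~A.1, is to prove boundedness of $(R(z)f,f)$ on rectangles over compact subsets of each component of $\Omega$ for $f$ in the dense set consisting of $\Ran E_0(\R\setminus\Delta_n)$ together with vectors with $\calF f\in C_0^\infty(\Delta_n,\calN)$: for such $f$ the functions $\norm{(\calF f)(\lambda)}^2_{\calN}$ and $((\calF f)(\lambda),Z(\lambda)g)_{\calN}$ are H\"older continuous by \eqref{a3a}, Privalov's theorem gives bounded boundary values of $(R_0(z)f,f)$ and of $GR_0(z)f$, and then \eqref{a7} together with the uniform invertibility of $I+T(z)J$ on such rectangles yields the purely a.c.\ character of $H$ on $\Omega$.
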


The last statement of  Proposition~\ref{pr2} is usually formulated 
under the additional assumption $\Ker G=\{0\}$. 
Actually, this assumption is not necessary; this is verified    in Lemma~A.1 of  the Appendix.

In terms of the wave operators the (local) scattering operator is defined as
\[
\mathbf{S}=\mathbf{S}(H,H_0;\Delta)= W_+(H,H_0;\Delta)^* W_-(H,H_0;\Delta).
\]
The scattering operator $\mathbf{S}$
commutes with $H_{0}$ and is unitary on the subspace $\Ran E_0(\Delta)$. 
Thus, we have a representation
\[
( \calF \mathbf{S}  \calF^* f)(\lambda) =S(\lambda) f(\lambda), 
\quad \text{ a.e. }\lambda\in \Delta,
\]
where the operator  $S(\lambda):\calN\to\calN$  is  called the scattering matrix
for the pair of operators $H_0$, $H$. 
The scattering matrix  is a unitary operator in $\calN$.

We need the stationary representation for the scattering matrix
(see \cite[Chapter~7]{Yafaev} for the details).

\begin{proposition}\label{pr3}
Let Assumption~\ref{as1} hold, and let $\lambda\in\Omega$. Then
\begin{equation}
S(\lambda)=I-2\pi i Z(\lambda)J(I+T(\lambda+i0)J)^{-1}Z(\lambda)^*.
\label{a6}\end{equation}
\end{proposition}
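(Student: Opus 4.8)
The plan is to start from the general stationary representation of the scattering matrix and to reduce it, by purely algebraic manipulation of the resolvent identity \eqref{a7}, to the factored form \eqref{a6}. In the smooth framework one first establishes the stationary formulas for the wave operators $W_\pm(H,H_0;\Delta)$, whose existence and completeness are granted by Proposition~\ref{pr2}; evaluating $\mathbf{S}=W_+^*W_-$ on the energy surface then yields the general representation
\begin{equation*}
S(\lambda)=I-2\pi i\,\Gamma_0(\lambda)\bigl(V-VR(\lambda+i0)V\bigr)\Gamma_0(\lambda)^*,
\qquad\lambda\in\Omega,
\end{equation*}
where $\Gamma_0(\lambda):\calH\to\calN$ is the ``evaluation at energy $\lambda$'' map determined by $\calF$, so that $(\calF f)(\lambda)=\Gamma_0(\lambda)f$ for $f\in\Ran E_0(\Delta)$. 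Since $\Gamma_0(\lambda)=\Gamma_0(\lambda)E_0(\Delta)$, relation \eqref{a3b} gives the identifications $\Gamma_0(\lambda)G^*=\Gamma_0(\lambda)G_\Delta^*=Z(\lambda)$ and $G\Gamma_0(\lambda)^*=Z(\lambda)^*$, which are exactly what is needed to turn the abstract formula into \eqref{a6}.

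The core of the argument is then to identify the ``sandwiched'' operator $V-VR(z)V$ in terms of $T(z)$. Using $V=G^*JG$, the definition \eqref{a6a}, and the elementary identity $G\bigl(GR_0(\overline z)\bigr)^*=GR_0(z)G^*=T(z)$, I would sandwich \eqref{a7} between two copies of $G$ to obtain
\begin{equation*}
GR(z)G^*=T(z)-T(z)J\bigl(I+T(z)J\bigr)^{-1}T(z)=\bigl(I+T(z)J\bigr)^{-1}T(z),
\end{equation*}
the last equality being the algebraic identity $I-AJ(I+AJ)^{-1}=(I+AJ)^{-1}$ with $A=T(z)$. Substituting this into $V-VR(z)V=G^*JG-G^*J\bigl(GR(z)G^*\bigr)JG$ and applying the same identity once more from the left gives the clean factorization
\begin{equation*}
V-VR(z)V=G^*J\bigl(I+T(z)J\bigr)^{-1}G,
\qquad z\in\C\setminus\R.
\end{equation*}

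It remains to sandwich this with $\Gamma_0(\lambda)$ and to pass to the boundary. By the identifications above,
\begin{equation*}
\Gamma_0(\lambda)\bigl(V-VR(z)V\bigr)\Gamma_0(\lambda)^*
=Z(\lambda)J\bigl(I+T(z)J\bigr)^{-1}Z(\lambda)^*,
\end{equation*}
and the choice $z=\lambda+i0$ then yields \eqref{a6}. The limit is legitimate precisely on $\Omega$: Proposition~\ref{pr1} furnishes the H\"older continuity of $T(z)$ up to the real axis on $\interior(\Delta)$ together with the boundedness of $(I+T(\lambda+i0)J)^{-1}$ there, while \eqref{a3a} controls $Z(\lambda)$; hence the right-hand side has a continuous boundary value for every $\lambda\in\Omega$.

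I expect the genuine obstacle to lie not in this algebra but in the cited first step, namely the rigorous derivation of the general stationary representation of $S(\lambda)$. This requires the stationary form of the wave operators, and hence a limiting absorption principle that makes $GR(\lambda+i0)G^*$ meaningful as a weak boundary value, followed by the delicate identification of the time-dependent object $\mathbf{S}=W_+^*W_-$ with its stationary fibered counterpart on the energy surface. Under Assumption~\ref{as1} this is exactly the content of the smooth scattering theory of \cite[Chapter~7]{Yafaev}, so once that machinery is invoked the remaining work is just the computational reduction sketched here.
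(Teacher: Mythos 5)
Your proposal is correct and follows essentially the same route as the paper, which offers no proof of Proposition~\ref{pr3} at all but simply cites the stationary scattering theory of \cite[Chapter~7]{Yafaev}; your algebraic reduction of $V-VR(z)V$ to $G^*J(I+T(z)J)^{-1}G$ via \eqref{a7} and the identification $\Gamma_0(\lambda)G^*=Z(\lambda)$ from \eqref{a3b} is exactly the computation underlying that citation, and you correctly isolate the genuinely hard step (the rigorous stationary representation of $\mathbf{S}$ and the limiting absorption principle on $\Omega$) as the content of the cited reference.
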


This proposition, in particular, implies that $S(\lambda)$ is 
a H\"older continuous function of $\lambda\in\Omega$. 

Since the operator $Y(\lambda)=J(I+T(\lambda+i0)J)^{-1}$  
is bounded and $Z(\lambda)$ is compact, it follows that the 
operator $S(\lambda)-I$ is  compact.  
Thus, the spectrum of $S(\lambda)$ consists of  eigenvalues 
accumulating possibly only to the point 1. 
All eigenvalues of $S(\lambda)$ distinct from $1$ have finite multiplicity.
If $N_{0}=\infty$ then necessarily  $1$ is the eigenvalue of infinite multiplicity 
or the accumulation point (or both).

\subsection{Main Result}\label{sec.a4}

First note that since $D(\lambda)$ is the difference
of two orthogonal projections, the spectrum of 
$D(\lambda)$ is a subset of $[-1,1]$. 

We denote by $e^{i\theta_n(\lambda)}$, $n=1,\dots,N$, 
the eigenvalues of $S(\lambda)$ distinct from $1$. 
The eigenvalues are enumerated with the multiplicities taken 
into account.  
We set
$\varkappa_n(\lambda)=\abs{e^{i\theta_n(\lambda)}-1}/2$.

\begin{theorem}\label{th1}
Let Assumption~\ref{as1} hold true and let $\lambda\in\Omega$. 
Then   
the a.c. spectrum of $D(\lambda)$
consists of the union of intervals \eqref{a0},
where each interval has the multiplicity one in the 
spectrum. The operator 
$D(\lambda)$
has no singular continuous spectrum.
The eigenvalues of $D(\lambda)$ can accumulate
only to $0$ and to the points $\pm\varkappa_n(\lambda)$. 
All eigenvalues of $D(\lambda)$ distinct from $0$ and
$\pm\varkappa_n(\lambda)$ have finite multiplicities.
\end{theorem}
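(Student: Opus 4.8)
The plan is to reduce the spectral analysis of $D(\lambda)$ to that of an explicit model operator built from the scattering matrix $S(\lambda)$ and a single universal Hankel operator, and then to identify the two by smooth scattering theory. First I would use that $D(\lambda)=P-P_0$ is the difference of the orthogonal projections $P=E((-\infty,\lambda))$ and $P_0=E_0((-\infty,\lambda))$. For such a difference the self-adjoint operator $B=I-P-P_0$ satisfies $B^2=I-D(\lambda)^2$ and anticommutes with $D(\lambda)$, so that $U=\sign B$ is a self-adjoint unitary anticommuting with $D(\lambda)$ on the orthogonal complement of $\Ker(D(\lambda)^2-I)$. This symmetry shows that, away from $\pm1$, the spectrum of $D(\lambda)$ is symmetric about the origin, it reduces every assertion of the theorem to a corresponding statement about the nonnegative operator $D(\lambda)^2$, and it identifies the eigenspaces $\Ker(D(\lambda)\mp I)$ with $\Ran P\cap\Ker P_0$ and $\Ker P\cap\Ran P_0$. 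It therefore suffices to compute the a.c. spectrum of $D(\lambda)^2$, to show it carries no s.c. spectrum, and to locate the accumulation points of its eigenvalues, and then to transport the sign back through $U$.

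Second, I would derive a stationary formula for $D(\lambda)$ from the resolvent representation \eqref{a7}. Writing the difference of the two spectral projections through Stone's formula in terms of the boundary values $R(\mu\pm i0)$ and $R_0(\mu\pm i0)$ and sandwiching by $G$, one expresses $D(\lambda)$ through the operators $Z(\mu)$, $J$ and $(I+T(\mu+i0)J)^{-1}$, which by Proposition~\ref{pr3} are precisely the ingredients of $S(\lambda)$. The decisive point is local: near the energy $\lambda$ the sharp cut-off combined with the Cauchy-type singularity $(\mu-\mu')^{-1}$ of the boundary resolvents produces, after an exponential change of the energy variable sending the spectral parameter to the whole line, the Hankel integral operator with kernel \eqref{b0a} --- the ``half-Carleman'' operator --- multiplied by operator factors that tend, as the energy approaches $\lambda$, to combinations of $S(\lambda)$ and $S(\lambda)^*$. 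This explains the numbers $\varkappa_n(\lambda)=\abs{e^{i\theta_n(\lambda)}-1}/2$: diagonalizing $S(\lambda)$ and using $\abs{e^{i\theta}-1}=2\abs{\sin(\theta/2)}$, each eigenvalue $e^{i\theta_n(\lambda)}$ contributes the scalar factor $\varkappa_n(\lambda)$ in front of the half-Carleman operator.

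Third, I would define the model operator as the orthogonal sum, over the eigenvalues $e^{i\theta_n(\lambda)}$ of $S(\lambda)$, of copies of the half-Carleman operator scaled by $\varkappa_n(\lambda)$. Diagonalizing the half-Carleman operator explicitly by the Mehler--Fock transform, I would read off that each channel $n$ contributes purely a.c. spectrum of multiplicity one filling $[-\varkappa_n(\lambda),\varkappa_n(\lambda)]$, which is exactly \eqref{a0}, with the natural thresholds at $0$ and $\pm\varkappa_n(\lambda)$. The same diagonalization singles out a class of operators that are smooth with respect to the half-Carleman operator, which is the analytic device needed in the last step.

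Finally, I would run smooth scattering theory comparing the nonnegative operators $D(\lambda)^2$ and the square of the model operator (the pair referred to in the introduction). Using the H\"older continuity \eqref{a3a} of $Z$ and of $S$ together with the smoothness class produced by the Mehler--Fock diagonalization, I would show that the local wave operators for this pair exist and are complete. Completeness identifies their a.c. parts, so that the a.c. spectrum of $D(\lambda)^2$, and hence by the symmetry of the first step that of $D(\lambda)$, is exactly \eqref{a0} with each interval of multiplicity one; smoothness forces the singular continuous spectrum to be empty; and the compactness of the correction terms confines the eigenvalues to accumulate only at $0$ and $\pm\varkappa_n(\lambda)$, all remaining eigenvalues having finite multiplicity. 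I expect the main obstacle to lie in the third and fourth steps jointly: isolating from the stationary formula an exact model whose remainder is a genuinely smooth perturbation of the half-Carleman operator, and verifying the smoothness estimates that guarantee existence and completeness of the wave operators. This is precisely where the explicit spectral analysis of the half-Carleman operator via the Mehler--Fock transform is indispensable.
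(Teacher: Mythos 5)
Your overall strategy --- reduce to $D(\lambda)^2$ via the anticommutation symmetry of a difference of projections, model the result by the square of the half-Carleman operator \eqref{b0a} coupled to $\tfrac14(S-I)(S-I)^*$, diagonalize by the Mehler--Fock transform, and conclude by smooth scattering theory with logarithmic weights --- is essentially the paper's strategy, and you correctly locate the crux in the smoothness estimates. But there is a concrete error in the multiplicity bookkeeping that traces back to a missing decomposition. You compare $D(\lambda)^2$ directly with a model containing \emph{one} copy of $\varkappa_n(\lambda)^2\,\calC_a^2$ per eigenvalue $e^{i\theta_n(\lambda)}$ and conclude that the a.c. spectrum of $D(\lambda)^2$ consists of the intervals $[0,\varkappa_n(\lambda)^2]$, each of multiplicity \emph{one}. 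That is inconsistent with your own first step: since $D(\lambda)$ is unitarily equivalent to $-D(\lambda)$ on $\calH_0$, the a.c. multiplicity of $D(\lambda)^2$ on $(0,\varkappa_n^2)$ is twice that of $D(\lambda)$ on $(0,\varkappa_n)$, so multiplicity one for $D(\lambda)$ on the symmetric interval $[-\varkappa_n,\varkappa_n]$ forces multiplicity \emph{two} for $D(\lambda)^2$ on $[0,\varkappa_n^2]$. The paper supplies the missing factor of two through the identity \eqref{b1}, $D^2=M_++M_-$ with $M_\pm$ as in \eqref{a5}, which is block-diagonal with respect to $\Ran E_0(\R_-)\oplus\Ran E_0(\R_+)$; Theorem~\ref{th.b1} is then proved separately for each block, each being a smooth perturbation of the \emph{same} model $\calC_a^2\otimes\tfrac14(S(0)-I)(S(0)^*-I)$. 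With a single-copy model the wave operators you postulate cannot be complete, so the final scattering step fails; the block decomposition is the idea your argument is missing.

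Two smaller points in the same direction. A direct sum of half-Carleman operators scaled by $\varkappa_n$ is a nonnegative operator with spectrum $\bigcup_n[0,\varkappa_n]$, so it cannot ``fill $[-\varkappa_n,\varkappa_n]$'' and cannot serve as a model for $D(\lambda)$ itself; the model must be built for $M_\pm$ out of $\calC_a^2$, and the symmetric intervals \eqref{a0} are recovered only at the very end through the $\pm$ symmetry together with a separate (easy) discussion of the eigenspaces $\Ker(D\mp I)$. Finally, no exponential change of variables is needed: the paper obtains the Carleman structure directly by matching the kernel $\calC_a^2(x,y)=\pi^{-2}\int_0^a (x+t)^{-1}(y+t)^{-1}\,dt$ of \eqref{uzu} against the Stone-formula representation of $E_0((0,a))E((-a,0))E_0((0,a))$, and the passage from $M_+$ to the model goes through a chain of intermediate operators whose successive differences are all of the form $X_\gamma K X_\gamma$ with $K$ compact and $X_\gamma$ strongly smooth with respect to the model.
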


The part of the theorem concerning the a.c.
spectrum can be equivalently stated as follows:
\emph{
The a.c. component of $D(\lambda)$ 
is unitarily equivalent to the operator of multiplication
by $x$ in the orthogonal sum}
$$
\bigoplus_{n=1}^{N} L^2([-\varkappa_n(\lambda),\varkappa_n(\lambda)],dx).
$$

In \cite{Push}, the above characterization of the a.c. spectrum 
of $D(\lambda)$ was obtained under 
more restrictive assumptions 
which combined smooth type and trace class type requirements. 
The construction of 
\cite{Push} gives no information on either the singular spectrum of 
$D(\lambda)$ or on its eigenvalues.

\subsection{Examples}
Let $H_0=-\Delta$ in $L^2(\R^d)$ with $d\geq1$.
Application of the Fourier transform shows that 
$H_0$ has a purely a.c. spectrum $[0,\infty)$ 
with multiplicity $N=2$ if $d=1$ and $N=\infty$ if $d\geq2$.

Let $H=H_0+V$, where $V$ 
is the operator of multiplication by a function $V:\R^d\to\R$
which is assumed to satisfy
\begin{equation}
\abs{V(x)}\leq C(1+\abs{x})^{-\rho},
\quad 
\rho>1.
\label{a4}
\end{equation}
Let $G=\abs{V}^{1/2}$, $J=\sign{V}$ so that $V=G^*JG$. 
Then   Assumption~\ref{as1} is fulfilled on every 
compact subinterval $\Delta$ of $ (0,\infty)$.
 Moreover,  by a well known argument involving 
Agmon's ``bootstrap" \cite{Agmon}
and Kato's theorem \cite{Kato} on the absence of positive eigenvalues
of $H$, the operator $I+T(\lambda+i0)J$ is invertible
for all $\lambda>0$  and hence $\Omega=(0,\infty)$.  Thus,
  Proposition~\ref{pr2} implies that the wave operators $W_{\pm} (H,H_{0})$   exist and are complete (this result was first obtained in  \cite{Kato3,Kuroda}).
The scattering matrix $S(\lambda)$  is a unitary operator in $L^2(\mathbb S^{d-1})$
(here  $\mathbb S^0=\{-1,1\}$) and depends H\"older continuously on $\lambda>0$. 
According to Proposition~\ref{pr3} the operator $S(\lambda)-I$
  is compact, and hence its spectrum consists of eigenvalues $  e^{i\theta_n(\lambda)} $. 
  
  In this example all the assumptions of Theorem~\ref{th1}
hold true with $\Omega=(0,\infty)$. Denoting, as before, $\varkappa_n(\lambda)=\abs{e^{i\theta_n(\lambda)}-1}/2$,
we obtain:
\begin{theorem}\label{th2}
Assume \eqref{a4}. Then for any $\lambda>0$, the a.c. spectrum of $D(\lambda)$
consists of the union of intervals   \eqref{a0}, 
where each interval has a multiplicity one in the 
spectrum. The operator 
$D(\lambda)$
has no singular continuous spectrum.  
The eigenvalues of $D(\lambda)$ can accumulate
only to $0$ and to the points $\pm\varkappa_n(\lambda)$. 
All eigenvalues of $D(\lambda)$ distinct from $0$ and
$\pm\varkappa_n(\lambda)$ have finite multiplicities.
\end{theorem}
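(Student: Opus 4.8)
The strategy is to obtain Theorem~\ref{th2} as a direct specialization of Theorem~\ref{th1}: one verifies that the pair $H_0=-\Delta$, $H=H_0+V$ with $V$ obeying \eqref{a4} satisfies Assumption~\ref{as1} on every compact subinterval $\Delta\subset(0,\infty)$, and that the exceptional set $\mathcal{X}$ of Proposition~\ref{pr1} is empty, so that $\Omega=(0,\infty)$. Once this is in place, fixing any $\lambda>0$ and invoking Theorem~\ref{th1} yields all four assertions verbatim.

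First I would check Assumption~\ref{as1}(A)--(B). Part (A) is classical: conjugating $H_0$ by the Fourier transform converts it into multiplication by $\abs{\xi}^2$, and passing to polar coordinates $\xi=\sqrt{\lambda}\,\omega$ diagonalizes $H_0$ as multiplication by $\lambda$ on $L^2\big((0,\infty);L^2(\mathbb{S}^{d-1})\big)$; hence the spectrum is purely a.c.\ on any $\Delta\subset(0,\infty)$ with constant multiplicity $N_0=\dim L^2(\mathbb{S}^{d-1})$, equal to $2$ for $d=1$ and $\infty$ for $d\geq2$. Part (B) requires that $G(\abs{H_0}+I)^{-1/2}=\abs{V}^{1/2}(-\Delta+I)^{-1/2}$ be compact; since $\abs{V}^{1/2}(x)$ tends to zero as $\abs{x}\to\infty$ and the symbol $(\abs{\xi}^2+1)^{-1/2}$ of $(-\Delta+I)^{-1/2}$ tends to zero as $\abs{\xi}\to\infty$, the product is compact by the standard criterion for operators of the form $a(x)\,b(-i\nabla)$ with $a,b$ vanishing at infinity.

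The substantial step is Assumption~\ref{as1}(C), the strong $H_0$-smoothness of $G$. With $\calF$ the diagonalization above, the operator $G_\Delta^*$ sends $\psi\in\calK$ to the function whose value at $\lambda$ is obtained by multiplying by $\abs{V}^{1/2}$ and restricting the Fourier transform to the sphere $\{\abs{\xi}=\sqrt{\lambda}\}$; thus $Z(\lambda)$ is, up to a Jacobian factor, the composition of the trace onto that sphere with multiplication by $\abs{V}^{1/2}$. The estimates \eqref{a3a}---uniform boundedness, compactness and H\"older continuity with some $\alpha\in(0,1]$ determined by $\rho$---are precisely the trace bounds underlying the limiting absorption principle for short-range Schr\"odinger operators; the hypothesis $\rho>1$ is exactly what is needed here, and the required estimates are recorded in \cite[Chapter~6]{Yafaev}. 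This is the only genuine obstacle; everything else is classical.

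Finally I would note that $\Omega=(0,\infty)$, i.e.\ that the equation $f+T(\lambda+i0)Jf=0$ has only the trivial solution for every $\lambda>0$. Agmon's bootstrap \cite{Agmon} upgrades any such solution to a genuine $L^2$ eigenfunction of $H$ at the positive energy $\lambda$, while Kato's theorem \cite{Kato} forbids positive eigenvalues; hence $I+T(\lambda+i0)J$ is invertible throughout $(0,\infty)$. With Assumption~\ref{as1} verified and $\Omega=(0,\infty)$, Theorem~\ref{th1} applies at every $\lambda>0$ and gives the claimed description of the a.c.\ spectrum as the multiplicity-one union \eqref{a0}, the absence of singular continuous spectrum, and the stated accumulation and finite-multiplicity properties of the eigenvalues.
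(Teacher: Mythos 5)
Your proposal is correct and takes essentially the same route as the paper: the paper's justification of Theorem~\ref{th2} is precisely the paragraph preceding it, which verifies Assumption~\ref{as1} for $G=\abs{V}^{1/2}$, $J=\sign V$ on every compact subinterval of $(0,\infty)$ (citing the standard trace estimates behind the strong $H_0$-smoothness rather than re-deriving them), invokes Agmon's bootstrap and Kato's theorem on the absence of positive eigenvalues to conclude $\Omega=(0,\infty)$, and then applies Theorem~\ref{th1} at each $\lambda>0$. Your write-up simply supplies a bit more detail on points (A)--(C), in line with what the paper leaves to the references.
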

The above characterisation of the a.c. spectrum
was   obtained earlier in \cite{Push} for $d=1,2,3$ 
under the more restrictive assumption
$\rho>d$.

Similar applications are possible in situations where the diagonalization 
of $H_0$ is known explicitly. For example, 
the perturbed
Schr\"odinger operator with a constant magnetic field in dimension 
three 
(and probably the perturbed periodic 
Schr\"odinger operator in arbitrary dimension) 
can be considered. 
Moreover, in Theorem~\ref{th1}, we do not assume the operators $H_0$, $H$ to be
semibounded. 
Thus, one can apply this theorem to the perturbations of the Dirac operator
and the Stark operator (i.e. the Schr\"odinger operator with a linear electric potential). 

\subsection{The strategy of the proof of Theorem~\ref{th1}}
In order to simplify our notation, we will assume without the loss of generality 
that $\Delta=[-1,1]$ and $\lambda=0\in\Omega$.
Clearly, the general case can be reduced to this one by a shift and scaling.
We fix $a>0$ such that $[ -a, a]\subset\Omega$. 
In Section~\ref{sec.b} by using a simple operator theoretic argument
(borrowed from \cite{Push}),
we reduce the spectral analysis of $D(0)$ to 
the spectral analysis of the self-adjoint operators
\begin{equation}
M_+=E_0(\R_+)E(\R_-)E_0(\R_+),
\qquad
M_-=E_0(\R_-)E(\R_+)E_0(\R_-),
\label{a5}
\end{equation}
where as usual $\R_+=(0,\infty)$, $\R_-=(-\infty,0)$.
In Section~\ref{sec.c}, we construct an explicit ``model'' self-adjoint operator 
$M$ and analyze its spectrum. After this, in Sections~\ref{sec.b} and \ref{sec.d} we 
prove that the wave operators for the pair $W_{\pm}(M_+, M)$ exist and 
are complete. This allows us to describe the spectrum
of $M_+$. The operator $M_-$ is analyzed in a similar way.

The proof of the existence and completeness of the wave operators for 
the pair   $M$, $M_+$ is achieved by showing that the difference 
$M_+-M$ can be represented as $X K X$, where
the operator $X$ is strongly $M$-smooth and the operator $K$ is 
compact, see Section~\ref{sec.b2}. 
In \cite{Push} the same aim was achieved, roughly speaking, by showing that 
(under more stringent assumptions) the difference $M_+-M$
is a trace class operator.

\section{The model operator}\label{sec.c}

\subsection{The half-Carleman operator $\calC_a$}\label{sec.c1}
The Carleman operator is the Hankel integral operator in 
$L^2(\R_+)$ with the integral kernel $1/(x+y)$. Let $\calC_a$ be
the integral operator on $L^2(0,a)$ with the Carleman kernel
(up to a normalization $1/\pi$): 
\begin{equation}
\calC_a(x,y)=\frac1{\pi}\frac1{x+y}.
\label{b0a}
\end{equation}
We will call $\calC_a$ the half-Carleman operator. 

Our first task is to recall the explicit diagonalization formula 
for $\calC_a$. 
Essentially, this diagonalization 
is given by   Mehler's formula  (see e.g. \cite[formula (3.14.6)]{BE}):
\begin{equation}
\label{c1}
\frac1\pi\int_1^\infty\frac{P_{-\frac12+it}(y)}{x+y}dy
=
\frac{1}{\cosh(\pi t)}P_{-\frac12+it}(x),
\quad t\in\R.
\end{equation}
Here $P_\nu$ is the Legendre function.

Let us exhibit the unitary operator which 
diagonalizes $\calC_a$. 
Recall that the Mehler-Fock transform (see e.g. \cite[Section 3.4]{Yakub}
and references therein) is a unitary operator 
$U:L^2((1,\infty),dx)\to L^2((0,\infty),dt)$ defined for $g\in C_0^\infty(1,\infty)$ by 
\begin{equation}
(U g)(t)=\sqrt{t\tanh(\pi t)}\int_1^\infty P_{-\frac12+it}(x) g (x)dx, 
\quad
t\in(0,\infty).
\label{c3}
\end{equation}
Let us introduce the unitary operators $B_{1}: L^2((0,\infty),dt) \to L^2((0,1),d\mu)$ and
$B_{2}: L^2((0,a),du)\to L^2((1,\infty),dx)$ by the formulas
\[
(B_{1}h)(\mu)=\frac{\cosh(\pi t)}{\sqrt{\pi\sinh(\pi t)}} h(t), \quad \mu=\frac1{\cosh(\pi t)}\in(0,1),
\]
and
\[
(B_{2}f)(x)=\frac {\sqrt{a}}{x} f (a/x), \quad x\in(1,\infty).
\]
Then the operator $\mathcal U_a =B_{1}U B_{2}:L^2((0,a),du)\to L^2((0,1),d\mu)$
is also unitary.
Using the change of variables  $u=a/x$ in \eqref{c3}, we see that $\mathcal U_a$ acts as
\begin{equation}
\label{c5}
(\mathcal U_a f)(\mu)
=
\sqrt{\frac{a}{\pi} t \cosh(\pi t)}
\int_0^a P_{-\frac12+it}(a/u)\frac{f(u)}{u}du,
\quad
\mu=\frac1{\cosh(\pi t)}\in(0,1).
\end{equation}
 Changing the variables $u=a/x$, $v=a/y$ in 
Mehler's formula \eqref{c1}, we get
\begin{equation}
(\mathcal U_a \calC_a f)( \mu)= \mu (\mathcal U_a f)(\mu),
\qquad
\mu  \in(0,1).
\label{c11}
\end{equation}
Let us summarize the above calculations. 

\begin{lemma}\label{lma.c2}
The half-Carleman operator $\calC_a$ in $L^2(0,a)$ 
has  a purely a.c. spectrum of multiplicity one, 
$\spec(\calC_a)=[0,1]$. The explicit diagonalization \eqref{c11} of 
$\calC_a$ is given by the unitary operator $\mathcal U_a$ 
defined by \eqref{c5}. 
\end{lemma}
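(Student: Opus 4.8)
The plan is to recognize Lemma~\ref{lma.c2} as the single statement that $\mathcal U_a$ conjugates $\calC_a$ into the operator $M_\mu$ of multiplication by the variable $\mu$ on $L^2((0,1),d\mu)$, and then to read off the spectral conclusions from the elementary spectral theory of $M_\mu$. Thus everything reduces to two points: (i) that $\mathcal U_a$ is unitary, and (ii) the intertwining relation \eqref{c11}, i.e. $\mathcal U_a\calC_a\mathcal U_a^*=M_\mu$. Much of the needed computation has already been assembled in the discussion preceding the lemma; the task is to organize it into this clean unitary-equivalence statement.

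For (i) I would verify that each factor in $\mathcal U_a=B_1UB_2$ is unitary. The unitarity of the Mehler--Fock transform $U$ is the classical fact cited from \cite{Yakub}. For $B_2$, the substitution $u=a/x$ gives $\int_1^\infty\abs{(B_2f)(x)}^2\,dx=\int_0^a\abs{f(u)}^2\,du$, so $B_2$ is an isometric bijection. For $B_1$, since $t\mapsto\mu=1/\cosh(\pi t)$ is a diffeomorphism of $(0,\infty)$ onto $(0,1)$ with $d\mu=-\pi\sinh(\pi t)\cosh^{-2}(\pi t)\,dt$, the weight $\cosh(\pi t)/\sqrt{\pi\sinh(\pi t)}$ in its definition is precisely the one for which $\int_0^1\abs{(B_1h)(\mu)}^2\,d\mu=\int_0^\infty\abs{h(t)}^2\,dt$; hence $B_1$ is unitary as well.

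For (ii) the cleanest route is to track $\calC_a$ through the three factors separately. First, the change of variables $u=a/x$, $v=a/y$ applied to the kernel $\tfrac{1}{\pi(u+v)}$ shows, after the Jacobian is absorbed into $B_2$, that $B_2\calC_aB_2^*=K$, where $K$ is the integral operator on $L^2((1,\infty),dx)$ with kernel $\tfrac{1}{\pi(x+y)}$. Second, Mehler's formula \eqref{c1} says exactly that $P_{-\frac12+it}$ is a generalized eigenfunction of $K$ with eigenvalue $1/\cosh(\pi t)$; inserting the kernel of $K$ into the definition \eqref{c3} of $U$ and interchanging the order of integration yields $UKU^*=M_{1/\cosh(\pi t)}$, multiplication by $1/\cosh(\pi t)$ on $L^2((0,\infty),dt)$. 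Third, because $B_1$ implements the relabelling $\mu=1/\cosh(\pi t)$ and multiplication operators commute with the scalar weight, one checks directly that $B_1M_{1/\cosh(\pi t)}B_1^*=M_\mu$. Composing the three identities gives $\mathcal U_a\calC_a\mathcal U_a^*=B_1UKU^*B_1^*=M_\mu$, which is \eqref{c11}.

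Finally, $M_\mu$ on $L^2((0,1),d\mu)$ has purely absolutely continuous spectrum of uniform multiplicity one, equal to $\overline{(0,1)}=[0,1]$; transporting these properties through the unitary $\mathcal U_a$ gives $\spec(\calC_a)=[0,1]$ together with the claimed purity and multiplicity, so the lemma follows. The substantive inputs are classical and cited above, so the real content is bookkeeping; the one place warranting care is the Fubini interchange in the computation of $UKU^*$, which is harmless on the core $C_0^\infty(1,\infty)$ since there the double integral converges absolutely.
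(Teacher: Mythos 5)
Your proposal is correct and follows essentially the same route as the paper: the paper also obtains Lemma~\ref{lma.c2} by composing the unitaries $B_1$, $U$, $B_2$, invoking Mehler's formula \eqref{c1} to identify $P_{-\frac12+it}$ as generalized eigenfunctions, and reading off the spectral properties from the resulting multiplication operator on $L^2((0,1),d\mu)$. Your write-up merely makes explicit the unitarity checks and the Fubini step that the paper leaves as "the above calculations."
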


\subsection{The strong $\calC_a$--smoothness}\label{sec.c1a}
It turns out that  
the operators of multiplication by functions 
with a logarithmic decay at $x=0$ are strongly $\calC_a$-smooth. 
Before discussing this, 
we need   some bounds on the 
Legendre function:
\begin{lemma}\label{lma2}
For any $R>0$ there exist  constants $C_1(R)$, $C_2(R,\delta)$ such that
for any $x\geq 1$ and any $t,t_1,t_2\in[0,R]$, one has 
\begin{align}
\abs{P_{-\frac12-it}(x)}&\leq C_1(R) x^{-1/2}, 
\label{c6}
\\
\Abs{P_{-\frac12-it_2}(x)-P_{-\frac12-it_1}(x)}&\leq 
C_2(R,\delta)\abs{t_2-t_1}^\delta x^{-1/2}(1+\log x)^\delta.
\label{c7}
\end{align}
\end{lemma}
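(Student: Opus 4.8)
The plan is to derive both bounds from the integral representation of the Legendre function together with its known large-$x$ asymptotics, treating the parameter $\nu=-\tfrac12-it$ as complex but with $t$ ranging over the compact set $[0,R]$, so that all constants depend only on $R$ (and on $\delta$ for the second estimate). For the pointwise bound \eqref{c6}, I would start from Mehler--Dirichlet type integral representation
\[
P_{-\frac12-it}(x)=\frac{1}{\pi}\int_0^\pi\bigl(x+\sqrt{x^2-1}\,\cos\varphi\bigr)^{-\frac12-it}\,d\varphi,
\qquad x\geq1,
\]
or an equivalent contour/integral formula valid for $x\geq 1$. Since $|(x+\sqrt{x^2-1}\cos\varphi)^{-\frac12-it}|=(x+\sqrt{x^2-1}\cos\varphi)^{-1/2}$, the modulus of the integrand is controlled by the real exponent $-1/2$, and the $it$ in the exponent contributes only a unimodular oscillatory factor. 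A direct estimate of the resulting real integral, using $x+\sqrt{x^2-1}\cos\varphi\geq x-\sqrt{x^2-1}\geq c/x$ near $\varphi=\pi$ and $\geq cx$ away from it, yields $|P_{-\frac12-it}(x)|\leq C_1(R)\,x^{-1/2}$ uniformly for $t\in[0,R]$. (Alternatively one invokes the classical asymptotics $P_{-\frac12-it}(x)\sim c(t)\,x^{-1/2}$ as $x\to\infty$ together with continuity and boundedness on any compact $x$-interval, with the coefficient $c(t)$ bounded for $t\in[0,R]$.)

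For the H\"older-type difference bound \eqref{c7}, the key observation is that differentiating the same representation in $t$ brings down a factor $\log(x+\sqrt{x^2-1}\cos\varphi)$, which grows like $\log x$ for large $x$. Thus I would estimate
\[
\frac{\partial}{\partial t}P_{-\frac12-it}(x)
=
-\frac{i}{\pi}\int_0^\pi
\log\bigl(x+\sqrt{x^2-1}\,\cos\varphi\bigr)\,
\bigl(x+\sqrt{x^2-1}\,\cos\varphi\bigr)^{-\frac12-it}\,d\varphi,
\]
and bound its modulus by $C(R)\,x^{-1/2}(1+\log x)$ by the same splitting of the $\varphi$-integral as above, the extra logarithm coming from the region where the base is of size $x$. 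This gives the Lipschitz estimate
\[
\Bigl|P_{-\frac12-it_2}(x)-P_{-\frac12-it_1}(x)\Bigr|
\leq C(R)\,|t_2-t_1|\,x^{-1/2}(1+\log x),
\]
i.e.\ the case $\delta=1$. To obtain the full range $\delta\in(0,1]$, I would interpolate between this Lipschitz bound and the trivial bound $|P_{-\frac12-it_2}(x)-P_{-\frac12-it_1}(x)|\leq 2C_1(R)\,x^{-1/2}$ coming from \eqref{c6}: writing the difference as $(\text{Lipschitz bound})^\delta\,(\text{trivial bound})^{1-\delta}$ produces the factor $|t_2-t_1|^\delta x^{-1/2}(1+\log x)^\delta$ claimed in \eqref{c7}.

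The main obstacle is making the integral estimates uniform and careful near the endpoint $\varphi=\pi$, where the base $x+\sqrt{x^2-1}\cos\varphi$ degenerates to order $1/x$ and the logarithm changes sign; one has to verify that this near-singular region contributes only an $O(x^{-1/2})$ amount and in particular does not spoil the $(1+\log x)$ growth rate. This requires a genuine (if routine) splitting of the $\varphi$-integral and a Laplace-type analysis of the contribution of the neighbourhood of $\varphi=\pi$, uniformly in $t\in[0,R]$. Once that uniformity is in hand, the interpolation step is purely algebraic, and the dependence of the constants on $R$ and $\delta$ is transparent from the construction.
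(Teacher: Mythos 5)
Your overall architecture (explicit representation, pointwise bound, Lipschitz bound in $t$, then interpolation to get the exponent $\delta$) is sensible, and the final interpolation step is correct --- it is essentially what the paper does through the elementary estimate $\abs{x^{it_1}-x^{it_2}}\leq C(\delta)\abs{t_2-t_1}^\delta(\log x)^\delta$. The genuine gap is at the first step: a direct modulus estimate of the Laplace integral cannot yield \eqref{c6}. Taking absolute values gives $\frac1\pi\int_0^\pi(x+\sqrt{x^2-1}\cos\varphi)^{-1/2}\,d\varphi$, and near $\varphi=\pi$ the base is $\approx\frac1{2x}+\frac{x}{2}(\pi-\varphi)^2$, so that this region contributes
\begin{equation*}
\asymp x^{-1/2}\int_0^1\frac{du}{\sqrt{x^{-2}+u^2}}\asymp x^{-1/2}\log x ,
\end{equation*}
not $O(x^{-1/2})$. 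This logarithm is not an artefact of crudeness: at $t=0$ one has $P_{-1/2}(x)\asymp x^{-1/2}\log x$ (via the complete elliptic integral representation of $P_{-1/2}$), so the modulus bound is sharp there, and for $t>0$ an $x^{-1/2}$ bound can only come from exploiting the oscillation of the discarded factor $(\,\cdot\,)^{-it}$. For the same reason your fallback --- the asymptotics $P_{-\frac12-it}(x)\sim c(t)x^{-1/2}$ with $c(t)$ bounded on $[0,R]$ --- is unavailable, since $c(t)$ involves $\Gamma(\pm it)$ and blows up as $t\to0$. The loss propagates: your derivative bound comes out as $x^{-1/2}(\log x)^2$ rather than $x^{-1/2}(1+\log x)$, and the interpolation then produces $(1+\log x)^{1+\delta}$ instead of $(1+\log x)^\delta$. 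So the ``verification that the neighbourhood of $\varphi=\pi$ contributes only $O(x^{-1/2})$'', which you correctly identify as the main obstacle, in fact fails.

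The paper proceeds differently: it splits $[1,\infty)$ into $[1,2)$, where the hypergeometric series for $F(-\nu,\nu+1;1;\frac{1-x}{2})$ converges uniformly and both bounds are trivial, and $[2,\infty)$, where it invokes the connection formula \eqref{c10} writing $P_{-\frac12+it}(x)$ as a sum of two terms of the form $A_\pm(t)(2x)^{-\frac12\mp it}F(\cdots;x^{-2})$. This exhibits the decay $x^{-1/2}$ and the oscillatory factors $x^{\mp it}$ explicitly, and \eqref{c7} reduces to the elementary H\"older estimate on $x^{it}$. If you want to keep the Laplace-integral route, you would need a genuine oscillatory-integral (stationary phase) analysis near $\varphi=\pi$, uniform in $t$ bounded away from $0$, which is substantially more than the routine splitting you describe; even then the case $t=0$ remains delicate, as the example of $P_{-1/2}$ shows.
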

The proof is given in the Appendix.

Let the operator $X_\gamma^{(0)}$ act in the space   $L^2(0,a)$  by the formula
\begin{equation}
(X_\gamma^{(0)} f) (x)=(1+\abs{\log x})^{-\gamma} f (x), \quad x\in(0,a),
\quad \gamma>0.
\label{c12}
\end{equation}
 Similarly to \eqref{a3b}, we define the operator 
$\mathcal{Z}(\mu): L^2(0,a) \to {\mathbb C}$ for $\mu \in (0,1)$ by the equation
\begin{equation}
(\mathcal U_a X_\gamma^{(0)} f)(\mu)= \mathcal{Z}(\mu) f. 
\label{uz}
\end{equation}

\begin{lemma}\label{lma1}
Let $\delta\in(0,1]$. Then for any $\gamma>\delta+1/2$,  
the operator $X_\gamma^{(0)}$ is strongly $\calC_a$-smooth 
with the exponent $\delta$ 
on any compact subinterval of $(0,1)$. 
\end{lemma}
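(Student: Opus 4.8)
The plan is to verify the two defining conditions of strong smoothness directly from the explicit diagonalization \eqref{c5}. First I would compute $\mathcal{Z}(\mu)$ from its defining relation \eqref{uz}: substituting $X_\gamma^{(0)} f$ into \eqref{c5} shows that $\mathcal{Z}(\mu)$ is the linear functional $f\mapsto\int_0^a z_\mu(u)f(u)\,du$ with kernel
$$
z_\mu(u)=\sqrt{\tfrac{a}{\pi}\,t\cosh(\pi t)}\;P_{-\frac12+it}(a/u)\,(1+\abs{\log u})^{-\gamma}u^{-1},
\qquad
\mu=\frac{1}{\cosh(\pi t)}.
$$
Since $\calN=\C$ here (the multiplicity is one), $\mathcal{Z}(\mu)$ is automatically compact and $\norm{\mathcal{Z}(\mu)}=\norm{z_\mu}_{L^2(0,a)}$, so both required estimates reduce to $L^2$-bounds on $z_\mu$ and on its increments. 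Substituting $x=a/u$ turns these into integrals over $(1,\infty)$ against the weight $(1+\abs{\log(a/x)})^{-2\gamma}$, and I would pass to $s=\log x$ so that the weight behaves like $(1+s)^{-2\gamma}$ at infinity.

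For a compact subinterval $[\mu_0,\mu_1]\subset(0,1)$ the corresponding $t=\pi^{-1}\operatorname{arccosh}(1/\mu)$ ranges over a compact interval $[t_0,t_1]\subset(0,\infty)$, on which $\mu\mapsto t$ is bi-Lipschitz; hence it suffices to prove the bounds with $\abs{\mu_2-\mu_1}$ replaced by $\abs{t_2-t_1}$, and the prefactor $c(t)=\sqrt{\tfrac a\pi t\cosh(\pi t)}$ is smooth and bounded there. For the uniform bound $\norm{\mathcal{Z}(\mu)}\le C$ I would insert the pointwise estimate \eqref{c6}, transferred from $P_{-\frac12-it}$ to $P_{-\frac12+it}$ via the reflection symmetry $P_{-\frac12+it}(x)=\overline{P_{-\frac12-it}(x)}$ for $x\ge1$, $t\in\R$, which gives equal moduli. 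This yields the finite integral $\int_0^\infty(1+\abs{s-\log a})^{-2\gamma}\,ds$, convergent precisely because $\gamma>1/2$.

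For the Hölder estimate I would split the increment as
$$
(z_{\mu_2}-z_{\mu_1})(u)
=c(t_2)\big[P_{-\frac12+it_2}(a/u)-P_{-\frac12+it_1}(a/u)\big]w(u)
+\big(c(t_2)-c(t_1)\big)P_{-\frac12+it_1}(a/u)\,w(u),
$$
where $w(u)=(1+\abs{\log u})^{-\gamma}u^{-1}$. The second term is controlled by the Lipschitz bound $\abs{c(t_2)-c(t_1)}\le C\abs{t_2-t_1}$ together with the $L^2$-bound on the kernel already established, giving a contribution $O(\abs{t_2-t_1})=O(\abs{t_2-t_1}^\delta)$. The first term is where the hypothesis $\gamma>\delta+1/2$ enters: applying the modulus-of-continuity bound \eqref{c7} (again via the reflection symmetry) produces the integral $\int_0^\infty(1+s)^{2\delta}(1+\abs{s-\log a})^{-2\gamma}\,ds$, whose integrand decays like $s^{2(\delta-\gamma)}$ and is therefore integrable exactly when $\gamma>\delta+1/2$.

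The main obstacle — really the heart of the lemma — is the balancing in this last integral: the logarithmic growth factor $(1+\log x)^\delta$ coming from \eqref{c7} must be absorbed by the logarithmic decay $(1+\abs{\log x})^{-\gamma}$ of the weight. Everything else (boundedness of the prefactor, the bi-Lipschitz change $\mu\leftrightarrow t$, the reflection symmetry used to invoke Lemma~\ref{lma2}) is routine; the content is that the weight $(1+\abs{\log x})^{-\gamma}$ with $\gamma>\delta+1/2$ is exactly strong enough to tame the slow, logarithmically modulated $x^{-1/2}$ behaviour of $P_{-\frac12+it}$ dictated by the Carleman kernel.
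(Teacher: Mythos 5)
Your proof is correct and follows essentially the same route as the paper's: both compute the kernel of $\mathcal{Z}(\mu)$ explicitly from \eqref{c5}, reduce the two smoothness estimates to $L^2((0,a),du)$-bounds on that kernel and on its increments in $t$, and invoke Lemma~\ref{lma2} (via the identification of $P_{-\frac12+it}$ with $P_{-\frac12-it}$) together with the smooth, bi-Lipschitz correspondence $\mu\leftrightarrow t$ on compact subintervals of $(0,1)$. The paper merely states the resulting conditions $\gamma>1/2$ and $\delta<\gamma-1/2$ without writing out the logarithmic integrals; you have simply made those computations explicit.
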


\begin{proof}
In view of  \eqref{c5}, \eqref{c12} and \eqref{uz} the operator
$\mathcal{Z}(\mu)$      satisfies the equation
$$
\mathcal{Z}(\mu) f
=
\sqrt{\frac{a}{\pi} t \cosh(\pi t)}
\int_0^a P_{-\frac12+it}(a/u) (1+\abs{\log u})^{-\gamma}  \frac{f(u)}{u}du,
$$
where $\mu=1/{\cosh(\pi t)}$.
We have to check the estimates (cf. \eqref{a3a})
\begin{equation}
\norm{\mathcal{Z}(\mu)}\leq C,
\quad
\norm{\mathcal{Z}(\mu)- \mathcal{Z}(\mu')}\leq C\abs{\mu-\mu'}^\delta
\label{a3acz}
\end{equation}
on any compact subinterval of $(0,1)$. 
If  $\mu$ is bounded away from zero, then  the variable $t$ belongs to 
the interval $[0,R]$ with some $R<\infty$.
It follows  from Lemma~\ref{lma2}
that the function
$$
 P_{-\frac12+it}(a/u) (1+\abs{\log u})^{-\gamma}  (1/u)
$$
of $u\in (0,a)$
belongs to the space $L^2((0,a),du)$ for any $\gamma> 1/2$ and as an element of this space 
is H\"older continuous in $t\in[0,R]$ with the exponent $\delta<\gamma-1/2$. 
Since the map $\mu \mapsto t$ is continuously differentiable 
away from $\mu=1$, the required claim follows.
\end{proof}

\subsection{The operator $M $}
Here we define the operator $M $ which we consider 
as a ``model operator" for $M_+$ (recall that $M_+$ is defined by \eqref{a5}). 
Our goal will be  to prove 
that the wave operators $W_{\pm}(M_{+}, M)$
  exist and are complete.

First consider the operator $\calC_a^2$ in $L^2(0,a)$; obviously 
this operator has the integral kernel 
\begin{equation}
\calC_a^2(x,y)=\frac1{\pi^2}\int_0^a \frac{dt}{(x+t)(y+t)}.
\label{uzu}
\end{equation}
Lemmas~ \ref{lma.c2} and \ref{lma1} yield the following result.

\begin{lemma}\label{lma1X}
The operator
$\calC_a^2$ has a purely a.c. spectrum $[0,1]$ of 
multiplicity one and for any $\delta\in(0,1]$ and any $\gamma>\delta+1/2$
the operator $X^{(0)}_\gamma$ 
is strongly $\calC_a^2$-smooth with the  exponent $\delta$ on any compact subinterval of $(0,1)$.  
\end{lemma}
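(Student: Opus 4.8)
The plan is to derive Lemma~\ref{lma1X} as an essentially formal
consequence of the three lemmas already established: Lemma~\ref{lma.c2}
(the explicit Mehler--Fock diagonalization $\mathcal U_a$ of $\calC_a$,
with purely a.c. spectrum $[0,1]$ of multiplicity one),
Lemma~\ref{lma1} (strong $\calC_a$-smoothness of $X_\gamma^{(0)}$), and
the spectral-mapping principle for the squaring map. The key observation
is that the \emph{same} unitary $\mathcal U_a$ that diagonalizes
$\calC_a$ also diagonalizes $\calC_a^2$: since
$(\mathcal U_a \calC_a f)(\mu)=\mu(\mathcal U_a f)(\mu)$ on $(0,1)$ by
\eqref{c11}, one immediately gets
$(\mathcal U_a \calC_a^2 f)(\mu)=\mu^2(\mathcal U_a f)(\mu)$. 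So
$\calC_a^2$ is unitarily equivalent to multiplication by $\mu^2$ on
$L^2((0,1),d\mu)$.

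First I would read off the spectral statement. The map $\mu\mapsto\mu^2$
is a $C^\infty$ diffeomorphism of $(0,1)$ onto itself with nonvanishing
derivative, so a change of variables $\nu=\mu^2$ turns multiplication by
$\mu^2$ into multiplication by $\nu$ on $L^2((0,1),d\nu)$ with an
absolutely continuous (and strictly positive) Jacobian
$d\mu/d\nu=\tfrac12\nu^{-1/2}$. Hence $\calC_a^2$ has purely absolutely
continuous spectrum $[0,1]$ of multiplicity one, which is the first
assertion.

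Next I would transfer the smoothness. Strong $\calC_a$-smoothness of
$X_\gamma^{(0)}$ with exponent $\delta$ means, in the sense of
\eqref{a3b}--\eqref{a3a}, that the operator $\mathcal Z(\mu)$ defined by
$(\mathcal U_a X_\gamma^{(0)} f)(\mu)=\mathcal Z(\mu)f$ satisfies the
uniform-bound and H\"older-continuity estimates \eqref{a3acz} in the
spectral variable $\mu$ on compact subintervals of $(0,1)$. Since
$\calC_a^2$ is diagonalized by the same $\mathcal U_a$ with spectral
variable $\nu=\mu^2$, the associated coefficient operator for
$\calC_a^2$ is $\wt{\mathcal Z}(\nu)=\mathcal Z(\sqrt\nu)$ (up to the
constant Jacobian factor absorbed into the unitary change of variable in
the preceding paragraph, which does not affect boundedness or H\"older
bounds). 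The uniform bound is preserved verbatim. For the H\"older
estimate I would use that on a compact subinterval of $(0,1)$ the map
$\nu\mapsto\sqrt\nu$ is smooth with bounded derivative, hence Lipschitz,
so
$\norm{\wt{\mathcal Z}(\nu)-\wt{\mathcal Z}(\nu')}
=\norm{\mathcal Z(\sqrt\nu)-\mathcal Z(\sqrt{\nu'})}
\leq C\abs{\sqrt\nu-\sqrt{\nu'}}^\delta\leq C'\abs{\nu-\nu'}^\delta$.
Thus $X_\gamma^{(0)}$ is strongly $\calC_a^2$-smooth with the same
exponent $\delta$ on any compact subinterval of $(0,1)$, for every
$\gamma>\delta+1/2$, exactly matching the hypothesis inherited from
Lemma~\ref{lma1}.

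I do not expect a serious obstacle here; the whole point is that squaring
commutes with the diagonalization. The one item deserving a line of care
is the bookkeeping of the Jacobian in the change of spectral variable
$\mu\mapsto\mu^2$: one must check that the implied unitary on
$L^2((0,1))$ intertwining the $\mu$-representation and the
$\nu$-representation does not spoil the estimates \eqref{a3acz}. Because
the Jacobian $\tfrac12\nu^{-1/2}$ is smooth and bounded (with bounded
reciprocal) on any compact subinterval of $(0,1)$, multiplication by its
square root is a uniformly bounded, H\"older-continuous multiplier on
such subintervals, so it is harmlessly absorbed into $\wt{\mathcal
Z}(\nu)$ without changing either the uniform bound or the H\"older
exponent. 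This completes the reduction.
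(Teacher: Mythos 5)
Your proposal is correct and follows the same route the paper intends: the paper simply states that Lemma~\ref{lma.c2} and Lemma~\ref{lma1} ``yield'' Lemma~\ref{lma1X}, the implicit argument being exactly your observation that $\mathcal U_a$ diagonalizes $\calC_a^2$ with spectral variable $\mu^2$, and that the change of variable $\nu=\mu^2$ is a smooth diffeomorphism with Jacobian bounded above and below on compact subintervals of $(0,1)$, so both the multiplicity-one a.c. spectrum and the estimates \eqref{a3acz} transfer with the same H\"older exponent $\delta$. Your extra care with the Jacobian factor is precisely the bookkeeping the paper leaves to the reader.
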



Next,   in $L^2((0,a),\calN)=L^2(0,a)\otimes \calN$  consider  the operators
\begin{equation}
M_1=\calC_a^2\otimes \Gamma,
\quad
X_\gamma^{(1)}=X_\gamma^{(0)}\otimes I,
\label{uzuz}
\end{equation}
where 
\begin{equation}
\Gamma= \frac14(S(0)-I)(S(0)^*-I)= \frac12 (I-\Re S(0)).
 \label{uzuz1}
\end{equation}
 At the last step we have used the unitarity of the scattering matrix.
The operator $\Gamma$ has a pure point spectrum with 
  the eigenvalues   $\varkappa_n(0)^2$.
From Lemma~\ref{lma1X}  it follows that, apart from the possible 
zero eigenvalue of infinite multiplicity,  $M_1$ has a purely a.c. spectrum
$\cup_{n=1}^N [0,\varkappa_n(0)^2]$ (each interval has a multiplicity one). 
Moreover, using the diagonalization of $\calC_a^2$ and 
choosing the basis of the eigenfunctions of $\Gamma$
in $\calN$, 
we can diagonalize  the operator $M_{1}$ in an obvious way. 
With respect to this diagonalization,  
for any $\delta\in(0,1]$ and any $\gamma>\delta+1/2$ the operator 
$X_\gamma^{(1)}$ is strongly $M_1$-smooth
with the exponent $\delta$ on any compact interval which   contains neither 
$0$ nor $\varkappa_n(0)^2 $, $n=1,\ldots, N$. 

Finally, we ``transplant" the operators $M_1$ and $X_\gamma^{(1)}$ into $\calH$. 
Recall (see Section~\ref{sec.a3}) that $\calF:\Ran E_0([-1,1])\to L^2( [-1,1],\calN)$ 
is a unitary operator which diagonalizes $H_0$. 
Let $\calH_a=\Ran E_0((0,a))$. 
It will be convenient to consider the 
restriction $\calF_a=\calF|_{\calH_a}$. 
Clearly, $\calF_a:\calH_a\to  L^2((0,a),\calN)$
is a unitary operator. 

Let us define the operators $M $, $X_\gamma$ in $\calH$ by 
\begin{equation}
M =\calF_a^* M_1 \calF_a\oplus 0,
\quad
X_\gamma=\calF_a^* X_\gamma^{(1)}\calF_a\oplus I
\label{c8}
\end{equation}
with respect to the orthogonal sum decomposition 
$\calH=\calH_a\oplus \calH_a^\bot$. Clearly, 
\begin{equation}
\begin{split}
 X_\gamma&=\omega_{\gamma}(H_{0}), \quad  \text{where}
\\
\omega_{\gamma}(x)&=(1+\abs{\log x})^{-\gamma}\chi_{(0,a)} (x)+\chi_{[a, \infty)} (x) +\chi_{(-\infty,0]} (x).
\end{split}
\label{omega}\end{equation}


From the above analysis we obtain:
\begin{theorem}\label{th.c2}
Besides the eigenvalue at $0$ (possibly, of infinite multiplicity), 
the spectrum of $M$ is absolutely continuous. 
The a.c. spectrum of $M $ consists of the union 
$\cup_{n=1}^N [0,\varkappa_n(0)^2]$, where each interval has the  multiplicity one.
For any $\delta\in(0,1]$ and any $\gamma>\delta+1/2$ 
the operator $X_\gamma$ is strongly $M $-smooth
with the exponent $\delta$ on any compact interval which   contains neither 
$0$ nor $ \varkappa_n(0)^2$, $n=1,\ldots, N$. 
\end{theorem}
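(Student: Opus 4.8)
The plan is to notice that all the genuine analytic content has already been assembled in the discussion preceding the statement: the spectral structure of $M_1=\calC_a^2\otimes\Gamma$ and the strong $M_1$-smoothness of $X_\gamma^{(1)}=X_\gamma^{(0)}\otimes I$ follow from Lemma~\ref{lma1X} together with the diagonalization of $\Gamma$ in \eqref{uzuz1}. What remains is purely bookkeeping: to transport these facts through the unitary map $\calF_a$ and to account for the trivial summand introduced in \eqref{c8}. By construction $M=\calF_a^*M_1\calF_a\oplus 0$ and $X_\gamma=\calF_a^*X_\gamma^{(1)}\calF_a\oplus I$ with respect to the decomposition $\calH=\calH_a\oplus\calH_a^\bot$, so that $M$ is unitarily equivalent, up to the orthogonal summand $0$ acting on $\calH_a^\bot$, to $M_1$. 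I would treat the spectral assertions and the smoothness assertion separately.

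For the spectral part I would invoke the fact that the absolutely continuous spectrum and its multiplicity function are unitary invariants. Hence the a.c. part of $M$ carried by $\calH_a$ coincides with that of $M_1$, namely the union $\cup_{n=1}^N[0,\varkappa_n(0)^2]$ with each interval of multiplicity one. The summand $0$ acting on $\calH_a^\bot$ has purely point spectrum and contributes only the eigenvalue $0$, of multiplicity $\dim\calH_a^\bot$ (possibly infinite), which simply merges with the zero eigenvalue already present for $M_1$. Thus, apart from the eigenvalue at $0$, the spectrum of $M$ is absolutely continuous and has exactly the asserted form.

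The strong $M$-smoothness of $X_\gamma$ is the one step where a short argument is required, and the point to watch is the interaction of the two orthogonal summands with the localization. Fix a compact interval $I\subset(0,1)$ containing neither $0$ nor any $\varkappa_n(0)^2$. Since $M$ vanishes on $\calH_a^\bot$ and $0\notin I$, the spectral projection of $M$ associated with $I$ is supported in $\calH_a$ and equals $\calF_a^*E_{M_1}(I)\calF_a$, where $E_{M_1}(I)$ is the corresponding spectral projection of $M_1$; on the range of this projection the identity summand of $X_\gamma$ on $\calH_a^\bot$ is annihilated, so $X_\gamma$ acts there as $\calF_a^*X_\gamma^{(1)}\calF_a$. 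Taking as the diagonalization of $M$ on this spectral subspace the composition of $\calF_a$ with the fixed diagonalization of $M_1$, I would conclude that the family $Z(\cdot)$ of \eqref{a3b} associated with the pair $(M,X_\gamma)$ on $I$ is literally the one associated with $(M_1,X_\gamma^{(1)})$, whence the bounds \eqref{a3a} with exponent $\delta$ hold for every $\gamma>\delta+1/2$ by the already established strong $M_1$-smoothness of $X_\gamma^{(1)}$. The only obstacle, a minor one, is precisely this verification that for intervals avoiding $0$ the relevant spectral subspace of $M$ lies entirely in $\calH_a$, so that the $\oplus\,0$ and $\oplus\,I$ decompositions do not interfere; once this is observed, the statement reduces to the facts recorded just before it.
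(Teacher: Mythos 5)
Your proposal is correct and follows essentially the same route as the paper, which states Theorem~\ref{th.c2} as a direct consequence of Lemma~\ref{lma1X}, the diagonalization of $\Gamma$, and the definitions \eqref{c8}, without a separate formal proof. You merely make explicit the routine bookkeeping the paper leaves implicit, and you correctly isolate the one point needing a remark, namely that for intervals avoiding $0$ the spectral subspace of $M$ lies in $\calH_a$, so the $\oplus\,0$ and $\oplus\,I$ summands do not interfere with the strong smoothness.
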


\section{Proof of Theorem~\ref{th1}}\label{sec.b}

\subsection{Reduction to the products of spectral projections}

Let us denote $D=D(0)$ and 
$$
\calH_+=\Ker(D-I), \quad 
\calH_-=\Ker(D+I),\quad
\calH_0=(\calH_-\oplus \calH_+)^\bot.
$$
It is well known (see e.g. \cite{ASS} or \cite{Halmos}) that 
$\calH_0$ is an invariant subspace for $D$ and that 
\begin{equation}
\text{$D|_{\calH_0}$
is unitarily equivalent to $(-D)|_{\calH_0}$.}
\label{b0}
\end{equation}
Thus, the spectral analysis of $D$ reduces to the spectral analysis
of $D^2$ and to the calculation of the dimensions of $\calH_+$
and $\calH_-$. 

 Recall that by our assumptions, 
the operator $I+ T(\lambda+i0) J$ is invertible for all $\abs{\lambda}\leq a$, 
  and $H$ has a purely a.c. spectrum on $[-a,a]$.
Using the notation $M_{+}$, $M_{-}$ (see \eqref{a5}) 
and the fact that $E(\{0\})=E_0(\{0\})=0$, 
by a simple algebra one obtains
\begin{equation}
D^2
=M_++M_-
=
E_0(\R_+)E(\R_-)E_0(\R_+)+ E_0(\R_-)E(\R_+)E_0(\R_-).
\label{b1}
\end{equation}
Clearly,  the r.h.s. provides a block-diagonal decomposition
of $D^2$ with respect to the orthogonal sum
$\calH=\Ran E_0(\R_-)\oplus \Ran E_0(\R_+)$.

Denote $\varkappa_n=\varkappa_n(0)$. 
Below we prove 

\begin{theorem}\label{th.b1}
Let Assumption~\ref{as1} hold true
and $\Delta=[-1,1]$, $\lambda=0$. Then the a.c. spectrum of $M_\pm$  
consists of the union of intervals $\cup_{n=1}^N[0,\varkappa_n^2]$,
where each interval has the multiplicity one 
in the spectrum.
The operators $M_+$ and $M_-$ have no singular continuous
spectrum.  
The eigenvalues of $M_\pm$ can accumulate only to $0$ 
and to the points $\varkappa_n^2$. 
All eigenvalues of $M_\pm$ distinct from $0$ and $\varkappa_n^2$ 
have finite multiplicities. 
\end{theorem}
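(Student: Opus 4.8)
The plan is to prove Theorem~\ref{th.b1} by establishing existence and completeness of the local wave operators $W_\pm(M_+,M)$ for the model operator $M$ of Theorem~\ref{th.c2} and the operator $M_+$, and then reading off the spectral conclusions from Theorem~\ref{th.c2} via the invariance principle of scattering theory. Since $M$ has, apart from the zero eigenvalue, purely a.c. spectrum $\cup_{n=1}^N[0,\varkappa_n^2]$ of multiplicity one, completeness of the wave operators transfers this a.c. part verbatim to $M_+$ and guarantees that $M_+$ has no singular continuous spectrum. The remaining assertions — that eigenvalues of $M_+$ accumulate only at $0$ and the $\varkappa_n^2$, and that eigenvalues away from these points have finite multiplicity — should follow from a compactness/Weyl-type argument once the difference $M_+-M$ is controlled. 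The operator $M_-$ is then handled by the same argument with the roles of $\R_+$ and $\R_-$ interchanged.

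The central mechanism, as announced in Section~\ref{sec.b2}, is to write the difference
\begin{equation}
M_+-M = X_\gamma K X_\gamma
\label{planfact}
\end{equation}
for a suitable compact operator $K$, where $X_\gamma=\omega_\gamma(H_0)$ is the multiplication operator of \eqref{omega}. By Theorem~\ref{th.c2}, $X_\gamma$ is strongly $M$-smooth with some exponent $\delta$ on any compact interval avoiding $0$ and the thresholds $\varkappa_n^2$, provided $\gamma>\delta+1/2$. First I would compute $M_+=E_0(\R_+)E(\R_-)E_0(\R_+)$ explicitly in the diagonalizing representation $\calF_a$, using the stationary formulas \eqref{a3b}, \eqref{a3B}, the resolvent identity \eqref{a7}, and the representation \eqref{a6} of the scattering matrix; the half-Carleman kernel \eqref{b0a} should emerge from the projection $E(\R_-)$ acting between the positive-energy channels, which is precisely the point of introducing $\calC_a$ and identifying $\Gamma=\tfrac12(I-\Re S(0))$ as in \eqref{uzuz1}. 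The task is to extract from this computation the factorization \eqref{planfact} with $K$ compact, the logarithmic weights in $X_\gamma$ being exactly what is needed to absorb the singularity of the Carleman-type kernel near the origin while keeping the smoothness hypothesis $\gamma>\delta+1/2$ satisfied.

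With \eqref{planfact} in hand and $X_\gamma$ strongly $M$-smooth, the standard smooth-perturbation machinery (as in \cite[Chapter~4]{Yafaev}) yields existence and completeness of $W_\pm(M_+,M)$ on any compact subinterval of the a.c. spectrum avoiding the exceptional points, together with the absence of singular continuous spectrum there and the local finiteness and discreteness of eigenvalues. Because the exceptional set consists only of the finitely-locally-many points $0$ and $\varkappa_n^2$, patching these local statements over an exhausting family of subintervals gives the global spectral description. The step I expect to be the main obstacle is the explicit computation producing \eqref{planfact}: one must show that the error term left over after identifying the leading Carleman $\otimes\,\Gamma$ structure is genuinely of the form $X_\gamma K X_\gamma$ with $K$ compact, which requires careful use of the Hölder continuity \eqref{a3a} of $Z(\lambda)$ and of $T(\lambda+i0)$ (Proposition~\ref{pr1}) near $\lambda=0$, and verifying that the weights are sharp enough to make $K$ compact yet weak enough to preserve $M$-smoothness. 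Handling the possible non-triviality of $\Ker G$ and the precise behaviour at the threshold $\lambda=0$ is the delicate part; the bounds \eqref{c6}, \eqref{c7} on the Legendre functions together with Lemma~\ref{lma1} are the tools that make the weighted estimates go through.
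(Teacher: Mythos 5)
Your proposal follows essentially the same route as the paper: the factorization $M_+-M=X_\gamma K X_\gamma$ you announce is exactly Theorem~\ref{th.b3}, the strong $M$-smoothness input is Theorem~\ref{th.c2}, and the ``standard smooth-perturbation machinery'' is Proposition~\ref{prp.b2} applied on the intervals between consecutive thresholds $\varkappa_n^2$. The one step you defer as the ``main obstacle'' is indeed where all the technical work lies --- the paper carries it out by interpolating a chain of operators $M_2$, $M_3$, $M_4$ between $M_+$ and $M$ (Lemmas~\ref{lma.b4}--\ref{lma.b7}), using Stone's formula to get the integral representation \eqref{b1b} and the weighted resolvent bounds of Lemma~\ref{lma.d1a} --- but your identification of the needed ingredients (H\"older continuity of $Z(\lambda)$ and $T(\lambda+i0)$ near $\lambda=0$, the emergence of the Carleman kernel, and the balance $\gamma>\delta+1/2$) is accurate.
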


From Theorem~\ref{th.b1} and the decomposition \eqref{b1} we immediately 
obtain that $D^2$ has no singular continuous spectrum; the a.c. spectrum 
of $D^2$ consists of the union of intervals $\cup_{n=1}^N[0,\varkappa_n^2]$,
where each interval has the multiplicity two; the eigenvalues of 
$D^2$ can accumulate only to $0$ 
and to the points $\varkappa_n^2$, and all eigenvalues of $D^2$ 
distinct from $0$ and $\varkappa_n^2$  have finite multiplicities.

From the above description of the spectrum of $D^2$ and from
\eqref{b0} we obtain the description of the spectrum of 
$D|_{\calH_0}$. 
In order to complete the proof of Theorem~\ref{th1}, 
it remains to consider the dimensions of $\calH_+$ and $\calH_-$. 
Assume first that $\varkappa_n<1$ for all $n$. Then $1$ cannot be an eigenvalue of $D^2$ of infinite multiplicity. Therefore
  $\dim\calH_-<\infty$ and $\dim\calH_+<\infty$
and we are done. 
If $\varkappa_n=1$ for some $n$, then the statement of Theorem~\ref{th1} is true
regardless of the dimensions of $\calH_+$ and $\calH_-$.
 
 Thus, for the proof of Theorem~\ref{th1} it suffices to
  prove Theorem~\ref{th.b1}. We consider the operator   $M_+$; 
the proof for $M_-$ is analogous.

\subsection{Application of scattering theory}\label{sec.b2}
Our proof of Theorem~\ref{th.b1} is based on the following well known 
fact from scattering theory, 
see e.g. \cite[Theorems 4.6.4, 4.7.9, 4.7.10]{Yafaev}.
\begin{proposition}\label{prp.b2}
Suppose that a bounded self-adjoint operator $M$ 
has a purely a.c. spectrum of constant multiplicity on an
open interval $\Lambda$. Suppose that a bounded operator $X$
 is strongly $M$-smooth  with an 
exponent $\delta>0$ on every compact subinterval of $\Lambda$. 
Let $K$ be a compact self-adjoint operator 
and $\widetilde M=M+X^*KX$. 
Then the local wave operators $W_{\pm} (\widetilde M, M; \Lambda)$ 
for  $M$, $\widetilde M$ and the interval 
$\Lambda$ exist and are complete. 
Thus, the a.c. spectrum of $\widetilde M$ on $\Lambda $ has the same
multiplicity as that of $M$. 
Moreover, if $\delta >1/2$ then $\widetilde M$ has no singular continuous spectrum or 
eigenvalues of infinite multiplicity on $\Lambda$. 
The eigenvalues of $\widetilde M$ in $\Lambda$  can accumulate only to the endpoints of $\Lambda$. 
\end{proposition}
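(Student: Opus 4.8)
The plan is to run the stationary scheme of smooth perturbation theory with $X^*KX$ in the role of the perturbation, treating the pair $(M,\widetilde M)$ exactly as the pair $(H_0,H)$ was treated in Propositions~\ref{pr1}--\ref{pr3}. Write $R_M(z)=(M-z)^{-1}$ and introduce the sandwiched resolvent $T(z)=XR_M(z)X^*$, which is compact for $\Im z\neq0$. The first step is a limiting absorption principle for $M$: if $\calF$ diagonalises $M$ on $\Lambda$ and $(\calF(XE_M(\Lambda))^*\psi)(\lambda)=Z(\lambda)\psi$ with $Z$ compact-valued and H\"older of exponent $\delta$ as in \eqref{a3b}, \eqref{a3a}, then on $\Ran E_M(\Lambda)$ one has, up to a term holomorphic across $\Lambda$, the Cauchy-type representation $T(z)=\int_\Lambda(\lambda-z)^{-1}Z(\lambda)^*Z(\lambda)\,d\lambda$ (using the adjoint formula \eqref{a3B}). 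Privalov's theorem on H\"older densities then yields boundary values $T(\lambda\pm i0)$ on every compact subinterval of $\Lambda$, H\"older continuous and compact, with $\Im T(\lambda+i0)=\pi Z(\lambda)^*Z(\lambda)\ge0$. In particular $\sup\|T(z)\|<\infty$ locally, which is Kato $M$-smoothness of $X$.

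Second, I would establish existence and completeness of $W_\pm(\widetilde M,M;\Lambda)$. The abstract analogue of the resolvent identity \eqref{res}, \eqref{a7} gives $XR_{\widetilde M}(z)X^*=(I+T(z)K)^{-1}T(z)$, and for non-real $z$ the inverse exists because $\widetilde M$ is self-adjoint. Since $K$ and $T(\lambda+i0)$ are compact, $I+T(\lambda+i0)K$ is Fredholm, so exactly as in Proposition~\ref{pr1} it fails to be invertible only on a closed set $\mathcal X\subset\Lambda$ of Lebesgue measure zero; on compact subintervals of $\Lambda\setminus\mathcal X$ the boundary values $XR_{\widetilde M}(\lambda+i0)X^*$ exist and are locally bounded, i.e.\ $X$ is $\widetilde M$-smooth there. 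Kato--Birman smooth scattering theory then gives existence and completeness of the local wave operators, after a localisation showing that the null set $\mathcal X$ does not obstruct the a.c.\ subspace (the role of $\Ker X$ handled exactly as $\Ker G$ in Lemma~A.1). Completeness identifies the a.c.\ parts of $\widetilde M$ and $M$ on $\Lambda$ up to unitary equivalence, so the a.c.\ multiplicity is preserved.

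Third, for $\delta>1/2$ I would read off the refined information from the limiting absorption principle for $\widetilde M$. On $\Lambda\setminus\mathcal X$ the H\"older boundary values of $XR_{\widetilde M}(\lambda+i0)X^*$ force the spectral measures of vectors in $\Ran X^*$ to be absolutely continuous; since $\mathcal X$ is closed of measure zero, the singular spectrum of $\widetilde M$ in $\Lambda$ sits inside $\mathcal X$ and is pure point, which gives the absence of singular continuous spectrum. Eigenvalues are located by the Birman--Schwinger principle: $\lambda_0\in\Lambda$ is an eigenvalue precisely when $\Ker(I+T(\lambda_0+i0)K)$ contains a vector $u$ obeying the non-radiation constraint $Z(\lambda_0)Ku=0$, and the map $f\mapsto Xf$ embeds the eigenspace injectively into the finite-dimensional space $\Ker(I+T(\lambda_0+i0)K)$ (injectivity because $Xf=0$ would make $\lambda_0$ an eigenvalue of the purely a.c.\ operator $M$). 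This yields finite multiplicity.

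The main obstacle is the final assertion — that eigenvalues accumulate only at the endpoints of $\Lambda$ and that none have infinite multiplicity — and this is exactly where the threshold $\delta>1/2$ is indispensable. Mere H\"older continuity of $T(\lambda+i0)$ does not make $\mathcal X$, or the zero set of $I+T(\lambda+i0)K$, discrete in the interior of $\Lambda$ (Proposition~\ref{pr1} gives only measure zero), so a soft Fredholm argument does not suffice. I would instead bound the eigenvalue counting function on a compact subinterval $[c,d]\subset\Lambda$ by a Birman--Schwinger estimate in terms of $\Im T(\lambda+i0)=\pi Z(\lambda)^*Z(\lambda)$, using the exponent $\delta>1/2$ to convert the H\"older modulus into summability of the relevant characteristic values; this quantitative step is the heart of the matter and is the content of the cited results \cite[Theorems 4.6.4, 4.7.9, 4.7.10]{Yafaev}.
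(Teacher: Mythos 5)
The paper gives no proof of this proposition; it invokes it as a known result of stationary smooth scattering theory, with a pointer to \cite[Theorems 4.6.4, 4.7.9, 4.7.10]{Yafaev}. Your first two steps faithfully reconstruct that machinery: the Cauchy-type representation of $T(z)=XR_M(z)X^*$ with Privalov boundary values and $\Im T(\lambda+i0)=\pi Z(\lambda)^*Z(\lambda)$, the identity $XR_{\widetilde M}(z)X^*=(I+T(z)K)^{-1}T(z)$, the closed null exceptional set $\mathcal X$, local Kato smoothness of $X$ (equivalently of $\abs{K}^{1/2}X$ after factorizing $K$) with respect to both operators, and the gluing over component intervals; your finite-multiplicity argument via the injection $f\mapsto Xf$ of an eigenspace into the finite-dimensional $\Ker(I+T(\lambda_0+i0)K)$ is also correct. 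The gaps are in the last two assertions. Your deduction of empty singular continuous spectrum --- ``the singular spectrum sits inside $\mathcal X$ and is pure point'' --- is a non sequitur at that stage: a closed set of Lebesgue measure zero can perfectly well carry a singular continuous measure (Cantor-type sets), so locating the singular spectrum inside $\mathcal X$ proves nothing until you know that $\mathcal X$ is locally finite (or at least countable) in the interior of $\Lambda$, which is precisely the accumulation statement you defer to the end.

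For that crux, the route you propose --- a Birman--Schwinger bound on the eigenvalue counting function, ``converting the H\"older modulus into summability of the relevant characteristic values'' --- cannot work under the stated hypotheses and is not the content of the cited theorems: $K$ is merely compact and $Z(\lambda)$ merely compact-valued, no Schatten-class data is available (avoiding trace-class hypotheses is the whole point of this paper), so there is no summability to exploit and no quantitative bound on the number of eigenvalues in a compact subinterval in terms of the given data can hold. The actual mechanism where $\delta>1/2$ enters is qualitative. For $u\in\Ker(I+T(\lambda_0+i0)K)$, pairing $u=-T(\lambda_0+i0)Ku$ with $Ku$ and taking imaginary parts gives $Z(\lambda_0)Ku=0$, so your ``non-radiation constraint'' is automatic; then $\norm{Z(\lambda)Ku}\leq C\abs{\lambda-\lambda_0}^{\delta}$ with $2\delta-2>-1$ shows that $f=-R_M(\lambda_0+i0)X^*Ku$ is a genuine element of $\calH$ and an eigenvector of $\widetilde M$ at $\lambda_0$; thus every point of $\mathcal X\cap\interior\Lambda$ is an eigenvalue. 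If now eigenvalues $\lambda_n\to\lambda_0\in\interior\Lambda$, with normalized kernel vectors $u_n=-T(\lambda_n+i0)Ku_n$, compactness of $T(\lambda_0+i0)K$ together with H\"older continuity yields $u_n\to u\neq0$ along a subsequence, and the same integrability estimate, uniform in $n$, gives strong convergence of the mutually orthogonal eigenvectors $f_n=-R_M(\lambda_n+i0)X^*Ku_n$ to a nonzero limit --- a contradiction. This orthogonality--compactness argument, not a counting estimate, is what forces accumulation only at the endpoints, and with it local finiteness of $\mathcal X$ and hence the absence of singular continuous spectrum.
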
 

In what follows we prove 

\begin{theorem}\label{th.b3}
Let Assumption~\ref{as1} hold true. Then 
for any $\gamma>0$, 
the difference $M_+ -M $ can be represented as 
$X_\gamma K X_\gamma$ where $K$ is a compact self-adjoint operator.
\end{theorem}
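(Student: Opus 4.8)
The plan is to compute $M_+$ explicitly in the spectral representation $\calF_a$ of $H_0$ and then to recognize the model operator $M$ as the leading singular part of the resulting integral operator, the difference being a remainder that absorbs the two logarithmic weights $X_\gamma=\omega_\gamma(H_0)$. First I would apply Stone's formula to $E(\R_-)$ and $E_0(\R_-)$, insert the resolvent identity \eqref{a7}, and sandwich by $E_0(\R_+)$. Passing to the representation \eqref{a3b}, \eqref{a3B} of $G E_0(\Delta)$, one finds that the block of $\calF_a M_+\calF_a^*$ acting in $L^2((0,a),\calN)$ is the integral operator with operator-valued kernel
\[
\frac{-1}{2\pi i}\int_{-\infty}^0\frac{Z(\mu)\big(Y(\lambda+i0)-Y(\lambda-i0)\big)Z(\mu')^*}{(\mu-\lambda)(\mu'-\lambda)}\,d\lambda,\qquad \mu,\mu'\in(0,a),
\]
where $Y(\lambda\pm i0)=J(I+T(\lambda\pm i0)J)^{-1}$ and the factors $(\mu-\lambda)^{-1},(\mu'-\lambda)^{-1}$ are regular since $\mu,\mu'>0>\lambda$. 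The remaining blocks of $M_+$ (those involving $E_0([a,\infty))$, and the part of the $\lambda$-integral over $\lambda<-a$) produce kernels that stay bounded as $\mu,\mu'\to0$ and lie in $L^2$; as $\omega_\gamma$ is bounded and $(1+\abs{\log\mu})^\gamma$ is square-integrable near $0$, these blocks are Hilbert--Schmidt after conjugation by $X_\gamma$ and contribute a compact piece to $K$ (recall $X_\gamma=I$ and $M=0$ on $\calH_a^\bot$).

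On the interval $(-a,0)\subset\Omega$ I would use the jump identity. The resolvent identity gives $T(\lambda+i0)-T(\lambda-i0)=2\pi i\,Z(\lambda)^*Z(\lambda)$, whence $Y(\lambda+i0)-Y(\lambda-i0)=-2\pi i\,Y(\lambda+i0)Z(\lambda)^*Z(\lambda)Y(\lambda-i0)$. The kernel over $(-a,0)$ then becomes $\int_{-a}^0 (\mu-\lambda)^{-1}(\mu'-\lambda)^{-1}\Phi(\mu,\lambda,\mu')\,d\lambda$ with $\Phi(\mu,\lambda,\mu')=Z(\mu)\Xi(\lambda)Z(\mu')^*$ and $\Xi(\lambda)=Y(\lambda+i0)Z(\lambda)^*Z(\lambda)Y(\lambda-i0)$. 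By Proposition~\ref{pr3} and \eqref{uzuz1} one computes $Z(0)\Xi(0)Z(0)^*=\tfrac1{4\pi^2}(S(0)-I)(S(0)^*-I)=\pi^{-2}\Gamma$, and by \eqref{uzu} the Carleman kernel satisfies $\calC_a^2(\mu,\mu')=\pi^{-2}\int_{-a}^0(\mu-\lambda)^{-1}(\mu'-\lambda)^{-1}d\lambda$. Hence the part of the kernel obtained by freezing $\Phi$ at its value $\pi^{-2}\Gamma$ at the threshold is exactly $\calC_a^2(\mu,\mu')\Gamma$, the kernel of $M_1$. This identifies $M$ and reduces everything to showing that the remainder, with kernel $\int_{-a}^0(\mu-\lambda)^{-1}(\mu'-\lambda)^{-1}\big(\Phi(\mu,\lambda,\mu')-\pi^{-2}\Gamma\big)\,d\lambda$, equals $X_\gamma K_0 X_\gamma$ on the $\calH_a$ block with $K_0$ compact.

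To estimate the remainder I would use Hölder continuity: the continuity of $Z$ and of $T(\cdot+i0)^{-1}$ on $\Omega$ yields $\norm{\Phi(\mu,\lambda,\mu')-\pi^{-2}\Gamma}\leq C(\mu^\alpha+\abs{\lambda}^\alpha+\mu'^\alpha)$. Telescoping
\[
\Phi-\pi^{-2}\Gamma=[Z(\mu)-Z(0)]\Xi(\lambda)Z(\mu')^*+Z(0)\Xi(\lambda)[Z(\mu')^*-Z(0)^*]+Z(0)[\Xi(\lambda)-\Xi(0)]Z(0)^*
\]
splits the remainder into three operators. The third is a half-Carleman operator sandwiching multiplication by an $O(\abs{\lambda}^\alpha)$ factor; distributing $\abs{\lambda}^\alpha=\abs{\lambda}^{\alpha/2}\abs{\lambda}^{\alpha/2}$ between the two Carleman factors and conjugating by $X_\gamma$, a direct Hilbert--Schmidt computation (the weight $(1+\abs{\log\mu})^\gamma$ being beaten by the integrable singularity $\abs{\lambda}^{\alpha-1}$) shows it is compact for every $\gamma>0$. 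The first two terms carry a vanishing factor $Z(\mu)-Z(0)=O(\mu^\alpha)$ (respectively in $\mu'$) on one side and a bare half-Carleman kernel on the other; after conjugation by $X_\gamma$ they are bounded because the power $\mu^\alpha$ dominates the logarithmic growth of the weight, and I would deduce compactness by approximation, truncating the vanishing factor away from $\mu=0$ to obtain Hilbert--Schmidt operators converging in norm to the term in question.

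Finally, since $M_+-M$ is self-adjoint and $X_\gamma=\omega_\gamma(H_0)$ is positive and injective, the operator $K$ assembled from the blocks above is automatically self-adjoint once shown compact. The main obstacle is exactly the compactness of the first two telescoped terms: the half-Carleman operator amplifies mass concentrated near the threshold $\mu=0$, while $X_\gamma$ grows there, so mere boundedness is delicate and compactness does not follow from a Hilbert--Schmidt bound when $\alpha\leq 1/2$. The approximation argument, balancing the Hölder decay $\mu^\alpha$ against the logarithmic blow-up of the weight, is the crux of the proof.
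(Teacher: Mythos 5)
Your treatment of the near-threshold block is essentially the paper's own route (freeze the kernel $Z(\mu)\Xi(\lambda)Z(\mu')^*$ at the threshold to recover $\calC_a^2\otimes\Gamma$, then telescope the remainder into Hölder-improved pieces), but there are two genuine gaps. The first concerns the ``far'' blocks, which you dismiss in one sentence. Your kernel formula only makes sense on $\Ran E_0((0,a))$ and for $\lambda\in(-a,0)$: the strong smoothness hypothesis, and hence the representation \eqref{a3b} and the existence of the boundary values $T(\lambda\pm i0)$ and $Y(\lambda\pm i0)$, are only available on $\Delta=[-1,1]$ (more precisely on $\Omega$), while for $\lambda<-1$ the operator $H$ may have eigenvalues or singular spectrum and $Y(\lambda\pm i0)$ need not exist, so the integral $\int_{-\infty}^{0}$ in your first display is not defined. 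Likewise the block $E_0([a,\infty))E(\R_-)E_0([a,\infty))$ (on which $X_\gamma=I$ and $M=0$, so genuine compactness is needed) has no integral-kernel representation at all, since $H_0$ is not diagonalized outside $\Delta$; it is certainly not ``bounded and in $L^2$'' in any kernel sense. The paper spends two of its four reduction lemmas on exactly these blocks, using different tools: the compactness of $X_\gamma^{-1}(\varphi(H)-\varphi(H_0))$ for $\varphi\in C_0^\infty$ via almost-analytic extensions, and the compactness of $\psi(H)-\psi(H_0)$ for $\psi$ with finite limits at $\pm\infty$ (which rests on the nontrivial fact that $\tan^{-1}(H)-\tan^{-1}(H_0)$ is compact under Assumption~\ref{as1}(B)). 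None of this is recoverable from your kernel computation.

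The second gap is the one you yourself flag as the crux: the compactness of the telescoped terms carrying the factor $Z(\mu)-Z(0)=O(\mu^\alpha)$ on one side and a bare log-weighted Carleman kernel on the other. The truncation shortcut you propose does not close: after cutting off $\{\mu<\delta\}$ and extracting a small factor $\delta^{\alpha/2}$ from $\mu^\alpha(1+\abs{\log\mu})^\gamma$, the leftover operator has kernel comparable to $(\mu+\mu')^{-1}(1+\abs{\log\mu'})^\gamma$ with no compensating decay in $\mu$, and this one-sided weighted Carleman operator is \emph{unbounded} (test on $f=\chi_{(\varepsilon,2\varepsilon)}$), so norm convergence of the truncations does not follow. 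What is actually needed is the quantitative Schur-type estimate of the paper's Lemma~\ref{lma.d6}: the kernel $(1+\abs{\log x})^{-p}(x+y)^{-1}(1+\abs{\log y})^{q}$ defines a compact operator whenever $p>q>0$, proved by inserting the weight $\sqrt[4]{x/y}$ and balancing the two logarithms; one then factors your term through an intermediate weight $\omega_\sigma$ with $\sigma>\gamma$, as in the paper's proof of Lemma~\ref{lma.b7}. Until that estimate is supplied, the argument for the first two telescoped terms is incomplete.
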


Given Theorem~\ref{th.b3}, we are in a position to prove  Theorem~\ref{th.b1} (for $M_+$). 
Let us assume that 
$\varkappa_{n}$ are enumerated such that 
$\varkappa_{n} \geq \varkappa_{n+1}$ for all $n$.  
Take any $n$ such that $\varkappa_{n} > \varkappa_{n+1}$ and
let us apply Proposition~\ref{prp.b2} to the pair $M$, $\widetilde M=M_{+}$ 
and the  interval   $\Lambda_{n}= (\varkappa_{n+1}^2,\varkappa_{n}^2)$. 
If $N<\infty$, then we also consider the  interval $\Lambda_{N}= (0,\varkappa_{N}^2)$. 
By Theorem~\ref{th.c2}, the operator $X_\gamma$ for $\gamma>1$ 
is strongly $M$-smooth with some  $\delta>1/2$
on all compact subintervals of $\Lambda_n$.  
Thus, it follows from Proposition~\ref{prp.b2} that 
the local wave operators $W_{\pm}(M_{+}, M ;  \Lambda_{n})$ 
for all $n$ exist and are complete. 
This implies (see   e.g. \cite[Theorem 4.6.5]{Yafaev}) that the global 
wave operators $W_{\pm}(M_{+}, M  )$ also exist and are complete. 
In particular, the a.c. parts of $M$ and $M_{+}$ are unitarily equivalent. 
Furthermore, since $\delta>1/2$ the conclusions of  Theorem~\ref{th.b1} 
about the singular spectrum of $M_{+}$ and its eigenvalues also
follow from Proposition~\ref{prp.b2}.
Thus, we have proven Theorem~\ref{th.b1} for $M_+$; 
the proof for $M_-$ is analogous. 

\subsection{The proof of Theorem~\ref{th.b3}}

The proof of Theorem~\ref{th.b3} consists of several steps 
which we proceed to outline. 
In this subsection, we state four lemmas; the proofs will be given
in Section~\ref{sec.d}.
The first two lemmas show that
only a neighborhood of the point $\lambda=0$ is essential
 for the analysis of the operator $M_{+}$.

Define
\begin{align}
M_2&=E_0(\R_+)E((-a,0))E_0(\R_+),
\label{M2}
\\
M_3&=E_0((0,a))E((-a,0))E_0((0,a)).
\label{M3}
\end{align}

\begin{lemma}\label{lma.b4}
For any $\gamma>0$, 
the difference $M_+ -M_2$ can be represented as 
$X_\gamma K X_\gamma$ where $K$ is a compact self-adjoint operator.
\end{lemma}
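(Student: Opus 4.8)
The plan is to first simplify the difference into a single product of spectral projections and then reduce the whole statement to a compactness assertion. Since $E(\R_-)-E((-a,0))=E((-\infty,-a])$, definitions \eqref{a5} and \eqref{M2} give
$$
M_+-M_2=E_0(\R_+)\,E((-\infty,-a])\,E_0(\R_+)=PQP,
$$
where I abbreviate $P=E_0(\R_+)$ and $Q=E((-\infty,-a])$. By \eqref{omega} the operator $X_\gamma=\omega_\gamma(H_0)$ is a function of $H_0$, hence commutes with $P$, and $\omega_\gamma>0$ on the whole spectrum, so $X_\gamma$ is injective with dense range. Setting $L=QP X_\gamma^{-1}$ (a priori densely defined) and $K=L^*L$, one has $L X_\gamma=QP$ and therefore $X_\gamma K X_\gamma=(LX_\gamma)^*(LX_\gamma)=PQ^2P=PQP=M_+-M_2$. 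Thus the whole lemma reduces to showing that $L=QP X_\gamma^{-1}$ extends to a \emph{compact} operator; then $K=L^*L$ is the required compact self-adjoint operator.

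Next I would introduce a smooth cutoff to bring in the perturbation. Write $w(x)=(1+\abs{\log x})^{\gamma}$ for $x\in(0,a)$ and $w(x)=1$ for $x\ge a$, so that $P X_\gamma^{-1}=P\,w(H_0)$. Fix $\eta\in C^\infty(\R)$ with $\eta\equiv1$ on $(-\infty,-a]$, $\eta\equiv0$ on $[-a/2,\infty)$, and $\eta'$ supported in $[-a,-a/2]$. Then $Q\eta(H)=Q$, while $\eta(H_0)P=0$ because $\eta$ vanishes on $(0,\infty)$; consequently
$$
L=Q\bigl[\eta(H)-\eta(H_0)\bigr]\,P\,w(H_0).
$$
The difference $\eta(H)-\eta(H_0)$ I would expand by the Helffer--Sjöstrand formula combined with the resolvent identity \eqref{res} written in the form $R(z)-R_0(z)=-R(z)G^*JGR_0(z)$, which yields, with a numerical prefactor and an almost analytic extension $\widetilde\eta$ whose $\bar\partial\widetilde\eta$ is supported in a complex neighborhood of $[-a,-a/2]$,
$$
L=\tfrac1\pi\int_{\C}\bar\partial\widetilde\eta(z)\,\bigl(QR(z)G^*J\bigr)\bigl(GR_0(z)P\,w(H_0)\bigr)\,dm(z),
$$
where $dm$ is planar Lebesgue measure.

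I would then bound the two operator factors in the integrand uniformly on the support of $\bar\partial\widetilde\eta$. The factor $QR(z)G^*J$ is uniformly bounded: $\norm{QR(z)}$ is controlled by the gap between $z$ (whose real part lies in $[-a,-a/2]$) and the spectrum $(-\infty,-a]$ of $H$ on $\Ran Q$, while $R(z)G^*=(GR(\bar z))^*$ is bounded by \eqref{a9} and \eqref{a3}. The crux is the compactness of $GR_0(z)P\,w(H_0)$. On the range of $E_0([a,\infty))$, where $w\equiv1$, this is immediate from the compactness of $G(\abs{H_0}+I)^{-1/2}$ in \eqref{a3}. On the range of $E_0((0,a))$ I would use the strong smoothness: by \eqref{a3b} and \eqref{a3B}, in the spectral representation $\calF$ of $H_0$ the operator $GR_0(z)\,w(H_0)E_0((0,a))$ acts as
$$
h\longmapsto \int_0^a Z(\lambda)^*\,\frac{w(\lambda)}{\lambda-z}\,(\calF h)(\lambda)\,d\lambda .
$$
Here $\abs{\lambda-z}^{-1}$ is bounded for $\lambda\in(0,a)$ because $z$ stays in the gap, $Z(\lambda)$ is a norm-continuous family of compact operators with $\norm{Z(\lambda)}\le C$ by \eqref{a3a}, and the weight satisfies $w\in L^2(0,a)$ since $(1+\abs{\log\lambda})^\gamma$ is square-integrable near $0$ for \emph{every} $\gamma>0$. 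A Riemann-sum approximation then exhibits this operator as a norm limit of finite-rank operators, hence compact, with norm bounded uniformly in $z$.

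Combining these estimates, the integrand is a uniformly bounded, norm-continuous family of compact operators supported on the compact set where $\bar\partial\widetilde\eta\neq0$, so the integral converges in operator norm and $L$ is compact; this proves the lemma with $K=L^*L$. The one delicate point—and the reason the statement holds for \emph{all} $\gamma>0$—is the logarithmic weight near $H_0=0$: it cannot be absorbed into $G(\abs{H_0}+I)^{-1/2}$, since $(\abs{H_0}+I)^{-1/2}$ provides no decay as the energy tends to $0$. It is precisely the uniform boundedness and compactness of $Z(\lambda)$ up to $\lambda=0$, together with the membership $(1+\abs{\log\lambda})^\gamma\in L^2(0,a)$, that makes the argument work, while the energy gap separating $\Ran Q$ from $\Ran P$ is what keeps $(\lambda-z)^{-1}$ under control.
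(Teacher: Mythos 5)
Your opening reduction is sound and coincides with the paper's first step: writing $M_+-M_2=PQP$ with $P=E_0(\R_+)$, $Q=E((-\infty,-a])$, and taking $K=L^*L$ with $L=QPX_\gamma^{-1}$ correctly reduces everything to the compactness of $L$. Your treatment of the low-energy factor $GR_0(z)E_0((0,a))X_\gamma^{-1}$ via the family $Z(\lambda)$ and the membership $(1+\abs{\log\lambda})^{\gamma}\in L^2(0,a)$ is also exactly the mechanism of Lemmas~\ref{lma.d1} and \ref{lma.d2}. The gap is your single cutoff $\eta$. It equals $1$ on all of $(-\infty,-a]$ and $0$ on $[-a/2,\infty)$, so it is not compactly supported and has \emph{different} limits at $\pm\infty$. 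For such a function the Helffer--Sj\"ostrand representation with $\bar\partial\widetilde\eta$ supported near $[-a,-a/2]$ is not available: the identity \eqref{d5} requires $\varphi$ to decay at infinity (try $\varphi\equiv1$), and the difference version cannot be recovered by truncating $\eta$ at $-R$ and letting $R\to\infty$, because under the only available bound $\norm{R(z)-R_0(z)}\leq 2\abs{\Im z}^{-1}$ the contribution of the truncation region $\Re z\sim -R$ stays $O(1)$. If your argument were valid it would prove that $\eta(H)-\eta(H_0)$ is compact for \emph{every} pair with compact resolvent difference and every smooth $\eta$ with compactly supported derivative; after a Cayley transform this becomes the compactness of $\Psi(U)-\Psi(U_0)$ for a function $\Psi$ with a jump at a point of the essential spectrum of a unitary --- precisely the phenomenon of \cite{Krein,KM} that fails in general and that this paper is built around.

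What your cutoff silently absorbs is the high-energy piece $E_0((a,\infty))E((-\infty,-a])$, on which $X_\gamma^{-1}$ acts as the identity (see \eqref{omega}) and therefore gives no help. The paper splits $E_0(\R_+)=E_0((0,a))+E_0((a,\infty))$: the first piece is handled with a genuinely compactly supported $\varphi$ and the almost analytic calculus (Lemma~\ref{lma.d2}), while the second requires the compactness of $\psi(H)-\psi(H_0)$ for a continuous $\psi$ with different finite limits at $\pm\infty$ (Lemma~\ref{lma.d4}), a substantive input imported from \cite[Theorem~7.3]{Push2} (compactness of $\tan^{-1}(H)-\tan^{-1}(H_0)$ under Assumption~\ref{as1}(B)) and not a consequence of the resolvent identity alone. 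To repair your proof you must either take $\eta$ compactly supported --- which reintroduces an untreated term $E((-\infty,-R])P$ --- or supply an independent argument for the high-energy piece along the lines of Lemma~\ref{lma.d4}. A minor further point: on $\operatorname{supp}\bar\partial\widetilde\eta$ the factor $QR(z)$ is not uniformly bounded, since $\Re z$ may equal $-a$ where the distance to $\spec(H|_{\Ran Q})\subset(-\infty,-a]$ is only $\abs{\Im z}$; this is harmless given the $O(\abs{\Im z}^k)$ decay of $\bar\partial\widetilde\eta$, but your claim of uniform boundedness is not accurate as stated.
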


\begin{lemma}\label{lma.b5}
For any $\gamma>0$, 
the difference $M_2-M_3$ can be represented as 
$X_\gamma K X_\gamma$ where $K$ is a compact self-adjoint operator.
\end{lemma}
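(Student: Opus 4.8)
The plan is to compute the difference explicitly and reduce it to a single compactness statement. I write $P=E_0((0,a))$, $Q=E_0([a,\infty))$ and $A=E((-a,0))$, so that $E_0(\R_+)=P+Q$ (recall $E_0(\{0\})=0$). Substituting into \eqref{M2}, \eqref{M3} and cancelling the common term $PAP$, I get
\[
M_2-M_3=PAQ+QAP+QAQ .
\]
From \eqref{omega} the operator $X_\gamma=\omega_\gamma(H_0)$ acts as the identity on $\Ran Q$ and as multiplication by $(1+\abs{\log H_0})^{-\gamma}$ on $\Ran P$; in particular it is injective. Hence the only candidate is $K=X_\gamma^{-1}(M_2-M_3)X_\gamma^{-1}$, which is automatically symmetric, and everything reduces to showing that this operator is bounded and compact. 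Since $X_\gamma^{-1}Q=Q$ while $X_\gamma^{-1}P=(1+\abs{\log H_0})^{\gamma}P$, the three terms become $QAQ$, the term $QA\,(1+\abs{\log H_0})^{\gamma}P$, and its adjoint. The essential simplification is that in each term I may replace $A=E((-a,0))$ by the difference $\Delta E:=E((-a,0))-E_0((-a,0))$: indeed $E_0((-a,0))$ annihilates both $P$ and $Q$ (orthogonal spectral sets of $H_0$) and commutes with $(1+\abs{\log H_0})^{\gamma}$, so its contribution vanishes. Thus it suffices to prove that $Q\,\Delta E\, Q$ and $Q\,\Delta E\,(1+\abs{\log H_0})^{\gamma}P$ are compact.

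Next I would give $\Delta E$ a factorized integral representation. Since $[-a,0]\subset\Omega$, all boundary values below exist and are H\"older continuous, and the endpoints carry no point mass ($E(\{-a\})=E(\{0\})=0$); so Stone's formula together with the resolvent identity \eqref{a7} gives
\[
\Delta E=-\frac1{2\pi i}\int_{-a}^{0}\Bigl[(GR_0(\lambda-i0))^{*}Y(\lambda+i0)GR_0(\lambda+i0)-(GR_0(\lambda+i0))^{*}Y(\lambda-i0)GR_0(\lambda-i0)\Bigr]\,d\lambda,
\]
where $Y(\lambda\pm i0)=J(I+T(\lambda\pm i0)J)^{-1}$ is bounded and H\"older on $(-a,0)$ by Propositions~\ref{pr1} and \ref{pr3}. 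Sandwiching between $Q$ on the left and either $Q$ or $(1+\abs{\log H_0})^{\gamma}P$ on the right turns each term into an integral over $\lambda\in(-a,0)$ of a product of the two ``half-sandwich'' factors $QR_0(\lambda\pm i0)G^{*}$ and $GR_0(\lambda\pm i0)(\cdot)$.

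The left factor is always harmless: writing $QR_0(\lambda\pm i0)G^{*}=\bigl[QR_0(\lambda\pm i0)(\abs{H_0}+I)^{1/2}\bigr]\bigl(G(\abs{H_0}+I)^{-1/2}\bigr)^{*}$, the bracket is a bounded function of $H_0$ (the spectral gap $\mu'-\lambda\geq a$ for $\mu'\geq a$, $\lambda<0$, controls the denominator and $(\abs{\mu'}+1)^{1/2}/(\mu'-\lambda)$ stays bounded as $\mu'\to\infty$), while the second factor is compact by \eqref{a3}; hence $QR_0(\lambda\pm i0)G^{*}$ is compact, uniformly in $\lambda$. For the term $Q\Delta E\,Q$ the right factor $GR_0(\lambda\pm i0)Q$ is, by the same gap, bounded and compact uniformly in $\lambda$, so the integrand is a norm-continuous compact-operator-valued function on the finite interval $(-a,0)$ and the integral is compact. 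For the weighted term I would pass to the spectral representation $\calF$ and use \eqref{a3b}, \eqref{a3B}: on $\Ran P$ the operator $GR_0(\lambda+i0)(1+\abs{\log H_0})^{\gamma}P$ acts by $g\mapsto\int_0^a Z(\mu)^{*}\,(1+\abs{\log\mu})^{\gamma}(\mu-\lambda)^{-1}g(\mu)\,d\mu$ (no singularity, since $\mu-\lambda>0$). A direct estimate using $\norm{Z(\mu)}\leq C$ gives
\[
\bigl\|GR_0(\lambda+i0)(1+\abs{\log H_0})^{\gamma}P\bigr\|^{2}\lesssim\int_0^a\frac{(1+\abs{\log\mu})^{2\gamma}}{(\mu-\lambda)^2}\,d\mu\lesssim\frac{(1+\abs{\log\abs\lambda})^{2\gamma}}{\abs\lambda},
\]
so this factor is bounded for each fixed $\lambda$ and blows up only like $\abs\lambda^{-1/2}(1+\abs{\log\abs\lambda})^{\gamma}$ as $\lambda\to0$. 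The integrand (compact left factor, bounded $Y$, bounded right factor) is therefore compact for each $\lambda$, and $\int_{-a}^0\abs\lambda^{-1/2}(1+\abs{\log\abs\lambda})^{\gamma}\,d\lambda<\infty$, so the Bochner integral converges in operator norm to a compact operator.

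The hard part is exactly this last estimate at the shared endpoint $\lambda=0$ of the two spectral intervals $(-a,0)$ and $(0,a)$: there the two resolvents become nearly singular simultaneously ($\mu,\lambda\to0$), and one must verify that the \emph{arbitrarily large} logarithmic weight does not destroy integrability. The point is that the a.c. ``square-root'' singularity produces only $\abs\lambda^{-1/2}$, which is integrable with room to spare, so the extra sub-polynomial factor $(1+\abs{\log\abs\lambda})^{\gamma}$ is harmless for every $\gamma>0$. This is precisely what makes Lemma~\ref{lma.b5} more delicate than Lemma~\ref{lma.b4}, whose intervals are separated by a genuine gap, so that no such coincidence occurs and the mild weight is absorbed trivially.
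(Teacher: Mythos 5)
Your proof is correct and follows essentially the same route as the paper's: the same splitting $E_0(\R_+)=E_0((0,a))+E_0((a,\infty))$, the same Stone-formula/resolvent-identity representation of the cross terms (the paper packages the two boundary values into $\Im Y(\lambda)$, cf.\ \eqref{d12}), and the same key bound $\norm{GR_0(\lambda)E_0(\R_+)X_\gamma^{-1}}=O(\abs{\lambda}^{-1/2}\abs{\log\abs{\lambda}}^\gamma)$, which is the paper's Lemma~\ref{lma.d1a}(ii). The compactness of the integrand for each fixed $\lambda<0$ is the paper's Lemma~\ref{lma.d1}, so the norm-convergent integral is compact exactly as you argue.
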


Below we use the fact 
that the operator $R_0(z)E_0(\R_+)$  
is analytic in $z\in\C\setminus[0,\infty)$ and so for any $\lambda<0$ 
the operator $R_0(\lambda)E_0(\R_+)$ is well defined, bounded and 
self-adjoint. 
Let $\mathcal D\subset\calH$ be the dense set
\begin{equation}
\mathcal D=\{f\in\calH\mid \exists \delta=\delta(f): \  E_0((-\delta,\delta))f=0\}.
\label{b1a}
\end{equation}
Recall our notation $Y(\lambda)=J(I+T(\lambda+i0)J)^{-1}$ 
(see Section~\ref{sec.a3}) and set $\Im Y(\lambda)=(Y(\lambda)-Y(\lambda)^*)/2i$.
Let us introduce an auxiliary  operator $M_4$ in terms of its quadratic form 
\begin{equation}
(M_4f,f)=
-\frac1\pi\int_{-a}^0 
((\Im Y(0))GR_0(\lambda)E_0((0,a))f,GR_0(\lambda)E_0((0,a))f)d\lambda
\label{b1b}
\end{equation}
for $f\in\mathcal D$.   

\begin{lemma}\label{lma.b6}
Formula \eqref{b1b} defines a bounded self-adjoint operator $M_4$ on $\calH$. 
For any $\gamma>0$, 
the difference $M_3-M_4$ can be represented as 
$X_\gamma K X_\gamma$ where $K$ is a compact self-adjoint operator.
\end{lemma}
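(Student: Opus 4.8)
The plan is to derive an explicit integral representation for $M_3$, identify $M_3-M_4$ as the integral of the same integrand with $\Im Y(\lambda)$ replaced by its increment $\Im Y(\lambda)-\Im Y(0)$, and then absorb the weights $X_\gamma$. Throughout I write $A(\lambda)=GR_0(\lambda)E_0((0,a))$, which by the remark preceding the lemma is a bounded self-adjoint-flavoured operator for every $\lambda<0$ (no $\pm i0$ ambiguity). First I would establish that
\[
M_3=-\frac1\pi\int_{-a}^0 A(\lambda)^*\,\Im Y(\lambda)\,A(\lambda)\,d\lambda .
\]
To this end, apply Stone's formula $E((-a,0))=\frac1\pi\int_{-a}^0\Im R(\lambda+i0)\,d\lambda$ (using that $H$ has purely a.c. spectrum on $(-a,0)\subset\Omega$, so endpoints contribute nothing) and sandwich by $E_0((0,a))$. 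Since $R_0(\lambda)E_0((0,a))$ is self-adjoint for $\lambda<0$, the free term $E_0((0,a))\Im R_0(\lambda+i0)E_0((0,a))$ vanishes identically, leaving only $E_0((0,a))\Im\big(R(\lambda+i0)-R_0(\lambda+i0)\big)E_0((0,a))$. Inserting the resolvent identity \eqref{a7} and using once more that $R_0(\lambda\pm i0)E_0((0,a))=R_0(\lambda)E_0((0,a))$, the sandwiched difference equals $-A(\lambda)^*Y(\lambda)A(\lambda)$; since $A(\lambda)$ carries no imaginary part, its imaginary part is $-A(\lambda)^*\Im Y(\lambda)A(\lambda)$. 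Comparing with \eqref{b1b} this yields
\[
\big((M_3-M_4)f,f\big)=-\frac1\pi\int_{-a}^0\big((\Im Y(\lambda)-\Im Y(0))A(\lambda)f,A(\lambda)f\big)\,d\lambda,\qquad f\in\mathcal D,
\]
so the boundedness and self-adjointness of $M_4$ will follow, via $M_4=M_3-(M_3-M_4)$, once the right-hand side is controlled.

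Next I would introduce the weighted operator $\wt A(\lambda)=GR_0(\lambda)(1+\abs{\log H_0})^\gamma E_0((0,a))$, which on $\Ran E_0((0,a))$ equals $A(\lambda)X_\gamma^{-1}$ in view of \eqref{omega}. Using the stationary representation \eqref{a3B} one has $\wt A(\lambda)f=\int_0^a Z(x)^*(x-\lambda)^{-1}(1+\abs{\log x})^\gamma(\calF_a f)(x)\,dx$, whence, by Cauchy--Schwarz and the uniform bound $\norm{Z(x)}\le C$ from \eqref{a3a},
\[
\norm{\wt A(\lambda)}^2\le C\int_0^a\frac{(1+\abs{\log x})^{2\gamma}}{(x+\abs{\lambda})^2}\,dx\le C'\,\frac{(1+\abs{\log\abs{\lambda}})^{2\gamma}}{\abs{\lambda}}.
\]
Moreover each $\wt A(\lambda)$ is compact: truncating the weight to $(\eta,a)$ gives $GE_0((\eta,a))$ (compact by \eqref{a3}) times a bounded factor, hence a compact operator, and the same Cauchy--Schwarz estimate bounds the truncation error by $(C\int_0^\eta(1+\abs{\log x})^{2\gamma}(x+\abs{\lambda})^{-2}dx)^{1/2}\to0$ as $\eta\to0$. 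It is precisely here that the smoothing built into $G$ (equivalently into $Z$) rescues compactness despite the unbounded logarithmic weight.

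Finally I would set $K=-\frac1\pi\int_{-a}^0\wt A(\lambda)^*\big(\Im Y(\lambda)-\Im Y(0)\big)\wt A(\lambda)\,d\lambda$. By Proposition~\ref{pr1} together with \eqref{a6}, $Y$ is H\"older continuous on $[-a,0]$, so $\norm{\Im Y(\lambda)-\Im Y(0)}\le C\abs{\lambda}^\alpha$; combined with the bound on $\norm{\wt A(\lambda)}^2$ the integrand has norm at most $C\abs{\lambda}^{\alpha-1}(1+\abs{\log\abs{\lambda}})^{2\gamma}$, which is integrable on $(-a,0)$ because $\alpha>0$. Hence the integral converges in operator norm, and as a norm-convergent integral of compact self-adjoint operators $K$ is compact and self-adjoint. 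Since $\wt A(\lambda)X_\gamma=A(\lambda)$ on $\mathcal D$, the identity $X_\gamma KX_\gamma=M_3-M_4$ holds as quadratic forms on $\mathcal D$ and extends by boundedness; this simultaneously shows that $M_4=M_3-X_\gamma KX_\gamma$ is bounded and self-adjoint with quadratic form \eqref{b1b}.

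The hard part is the singular behaviour at $\lambda=0$ (and at $x=0$): both $A(\lambda)$ and the weight $(1+\abs{\log x})^\gamma$ degenerate there, and the entire construction rests on the precise cancellation between the gain $\abs{\lambda}^\alpha$ coming from H\"older continuity of the scattering data and the loss $\abs{\lambda}^{-1}$ in $\norm{\wt A(\lambda)}^2$. Getting this balance right — and verifying compactness of $\wt A(\lambda)$ even though the weight it contains is unbounded — is where the work lies; the logarithmic profile of $X_\gamma$ is exactly what keeps the $x$-integral defining $\norm{\wt A(\lambda)}^2$ finite for every $\gamma>0$.
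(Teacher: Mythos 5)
Your proposal is correct and follows essentially the same route as the paper: the same Stone-formula representation of $M_3$, the same identification of $M_3-M_4$ via $\Im(Y(\lambda)-Y(0))$, and the same balance of the H\"older gain $\abs{\lambda}^\beta$ against the $\abs{\lambda}^{-1}\abs{\log\abs{\lambda}}^{2\gamma}$ blow-up of $\norm{GR_0(\lambda)E_0((0,a))X_\gamma^{-1}}^2$, with compactness of the weighted sandwiched resolvent giving compactness of $K$. The only difference is that you inline the estimates that the paper isolates as Lemmas~\ref{lma.d1} and \ref{lma.d1a}.
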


\begin{lemma}\label{lma.b7}
For any $\gamma>0$, 
the difference $M_4-M $ can be represented as 
$X_\gamma K X_\gamma$ where $K$ is a compact self-adjoint operator.
\end{lemma}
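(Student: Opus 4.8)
The plan is to compute both $M_4$ and the model $M$ explicitly in the spectral representation $\calF_a$ of $H_0$ on $(0,a)$, to show that their difference is the half-Carleman kernel \eqref{uzu} multiplied by an operator-valued coefficient that vanishes in a H\"older sense at the corner $s=s'=0$, and then to deduce compactness of $K$ from a Hilbert--Schmidt estimate.

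\emph{Diagonalizing $M_4$.} For $f\in\mathcal D$ set $\hat g=\calF_a E_0((0,a))f$; since $f\in\mathcal D$ the function $\hat g$ is supported away from $0$, so the computations below are legitimate and extend by density. Using \eqref{a3b}, \eqref{a3B} and the fact that $R_0(\lambda)$ acts as multiplication by $(s-\lambda)^{-1}$, one gets $GR_0(\lambda)E_0((0,a))f=\int_0^a Z(s)^*\hat g(s)\,(s-\lambda)^{-1}\,ds$ for $\lambda<0$. Substituting this into \eqref{b1b}, interchanging integrals and evaluating the inner integral by $t=-\lambda$,
\[
\int_{-a}^0\frac{d\lambda}{(s-\lambda)(s'-\lambda)}=\int_0^a\frac{dt}{(s+t)(s'+t)}=\pi^2\,\calC_a^2(s,s'),
\]
I would obtain that, in the representation $\calF_a$, $M_4$ is the operator on $L^2((0,a),\calN)$ with operator-valued kernel $-\pi\,\calC_a^2(s,s')\,Z(s)\,\Im Y(0)\,Z(s')^*$ (and $M_4=0$ on $\calH_a^\bot$); its boundedness, claimed in Lemma~\ref{lma.b6}, is immediate since $Z$, $Y(0)$ and $\calC_a^2$ are bounded.

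\emph{Identifying $M$ and the difference.} By \eqref{c8}, \eqref{uzuz}, \eqref{uzuz1} the operator $M$ has, in the same representation, the constant-coefficient kernel $\calC_a^2(s,s')\,\Gamma$. From the stationary formula \eqref{a6}, $S(0)=I-2\pi i\,Z(0)Y(0)Z(0)^*$ with $Y(0)=J(I+T(0+i0)J)^{-1}$, so a short computation with $\Im Y(0)=(Y(0)-Y(0)^*)/2i$ gives $\Re S(0)=I+2\pi\,Z(0)\Im Y(0)Z(0)^*$ and hence, by \eqref{uzuz1}, $\Gamma=-\pi\,Z(0)\Im Y(0)Z(0)^*$. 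Thus the kernel of $M$ is exactly that of $M_4$ with $Z(s),Z(s')$ frozen at $0$, and $M_4-M$ has the kernel $\Theta(s,s')=-\pi\,\calC_a^2(s,s')\,\wt B(s,s')$, where $\wt B(s,s')=Z(s)\Im Y(0)Z(s')^*-Z(0)\Im Y(0)Z(0)^*$. Writing $\wt B=(Z(s)-Z(0))\Im Y(0)Z(s')^*+Z(0)\Im Y(0)(Z(s')-Z(0))^*$ and invoking \eqref{a3a} together with $\norm{Z(\cdot)}\le C$ gives the H\"older/vanishing bound $\norm{\wt B(s,s')}\le C(s^\alpha+s'^\alpha)$.

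\emph{Compactness of $K$ (the main obstacle).} In the representation $\calF_a$ the operator $X_\gamma$ is multiplication by $(1+\abs{\log s})^{-\gamma}$ (cf. \eqref{omega}), so I would define $K$ by the kernel $(1+\abs{\log s})^{\gamma}\Theta(s,s')(1+\abs{\log s'})^{\gamma}$ (and $K=0$ on $\calH_a^\bot$), so that $M_4-M=X_\gamma K X_\gamma$ holds by construction. Self-adjointness of $K$ is clear from $\calC_a^2(s,s')=\calC_a^2(s',s)$ and $\wt B(s',s)=\wt B(s,s')^*$ (here $\Im Y(0)=\Im Y(0)^*$). The remaining, and genuinely delicate, point is to prove that $K$ is Hilbert--Schmidt. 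The crucial input is the \emph{sharp} off-diagonal bound $\calC_a^2(s,s')\le C\,s'^{-1}\bigl(1+\log(s'/s)\bigr)$ for $0<s\le s'$: the crude Cauchy--Schwarz bound $\calC_a^2(s,s')\le(\pi^2\sqrt{ss'})^{-1}$ would leave a non-integrable factor $s^{-1}$ and \emph{fails}. Combining the sharp bound with $s^\alpha+s'^\alpha\le 2s'^\alpha$ on the region $s\le s'$ and substituting $s=s'r$, the inner integral in $s$ is $O\bigl(s'^{2\alpha-1}(1+\abs{\log s'})^{4\gamma}\bigr)$ (after $r=e^{-\rho}$ the weight $e^{-\rho}$ localizes $\rho=O(1)$), and then $\int_0^a s'^{2\alpha-1}(1+\abs{\log s'})^{4\gamma}\,ds'<\infty$ for every $\alpha>0$ and $\gamma>0$, since the power $s'^{2\alpha}$ beats any power of the logarithm. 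This is precisely where the H\"older vanishing of $\wt B$ enters: the factor $s'^{\alpha}$ converts the borderline $s'^{-1}$ into the integrable $s'^{2\alpha-1}$. Thus $K$ is Hilbert--Schmidt, hence compact, which completes the proof.
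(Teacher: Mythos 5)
Your computation of the kernels is correct and coincides with the paper's: in the representation $\calF_a$ both $M_4$ and $M$ vanish on $\calH_a^\bot$ and have the operator-valued kernels $-\pi\,\calC_a^2(s,s')Z(s)\Im Y(0)Z(s')^*$ and $-\pi\,\calC_a^2(s,s')Z(0)\Im Y(0)Z(0)^*$ respectively (your identity $\Gamma=-\pi Z(0)\Im Y(0)Z(0)^*$ is exactly the paper's $\Gamma=Z(0)QZ(0)^*$ with $Q=-\pi\Im Y(0)$), and your splitting of $\wt B$ together with the H\"older bound $\norm{\wt B(s,s')}\le C(s^\alpha+s'^\alpha)$ is also what the paper uses. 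The sharp estimate $\calC_a^2(s,s')\le Cs'^{-1}\bigl(1+\log(s'/s)\bigr)$ for $s\le s'$ is correct, and your integral computation does show that the scalar majorant $(1+\abs{\log s})^{\gamma}\calC_a^2(s,s')(s^\alpha+s'^\alpha)(1+\abs{\log s'})^{\gamma}$ is square-integrable on $(0,a)^2$.

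The gap is in the final step. The kernel of $K$ takes values in the bounded operators on $\calN$, and the paper allows $N_0=\dim\calN=\infty$ (e.g. $H_0=-\Delta$ in $\R^d$, $d\ge 2$). For an operator-valued kernel, square-integrability of the pointwise operator norms $\norm{\Theta(s,s')}_{\mathcal{B}(\calN)}$ yields boundedness of the integral operator but neither Hilbert--Schmidtness nor compactness: the kernel $\phi(s)\overline{\phi(s')}\,I_{\calN}$ with $\phi\in L^2$ is a counterexample. So the conclusion ``thus $K$ is Hilbert--Schmidt, hence compact'' is unjustified precisely where compactness has to be produced. The paper avoids taking norms of the whole kernel: it writes $\calC_a^2(s,s')=\pi^{-2}\int_0^a (s+t)^{-1}(t+s')^{-1}dt$ and factors the sandwiched difference as $K_1K_2$ through the auxiliary variable $t$, with $K_2$ carrying the compact factor $Z(y)^*$ and $K_1$ carrying the H\"older factor $Z(x)-Z(0)$; compactness then follows from Lemma~\ref{lma.d6} on weighted Carleman-type kernels combined with the compactness and norm-continuity of $Z(\cdot)$. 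Your argument is complete as written when $N_0<\infty$, and it can be repaired in general by approximating $Z(\cdot)$ uniformly on $[0,a]$ by finite-rank-valued functions and using your scalar $L^2$ majorant to control the operator norm of the error; but some such step exploiting the compactness of $Z$ is indispensable and is missing.
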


Clearly, Theorem~\ref{th.b3} and hence Theorem~\ref{th.b1} 
follow from Lemmas~\ref{lma.b4}, \ref{lma.b5}, \ref{lma.b6}, and \ref{lma.b7}.

\section{Proofs of Lemmas~\ref{lma.b4}--\ref{lma.b7}}\label{sec.d}

\subsection{Auxiliary estimates}
Let $\mathcal D$ be as in \eqref{b1a}. It is straightforward to see that 
$\mathcal D\subset \Dom(X_\gamma^{-1})$ for all $\gamma>0$. Denote $G_a=GE_0((0,a))$.

\begin{lemma}\label{lma.d1}
Let Assumption~\ref{as1} hold true. 
Then for any $\gamma>0$, the operator 
$GR_0(i)X_\gamma^{-1}$, defined initially on $\mathcal D$, extends 
to a compact operator from $\calH$ to $\calK$. 
\end{lemma}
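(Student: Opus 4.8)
The plan is to exploit the explicit form of the weight $\omega_\gamma$ in \eqref{omega} and to isolate the only dangerous region, the logarithmic singularity of $X_\gamma^{-1}$ at $\lambda=0$. Since $\omega_\gamma(x)=1$ for $x\in(-\infty,0]\cup[a,\infty)$ and $\omega_\gamma(x)=(1+\abs{\log x})^{-\gamma}$ on $(0,a)$, on the dense set $\mathcal D$ of \eqref{b1a} we may split
\[
X_\gamma^{-1}=E_0(\R\setminus(0,a))+X_\gamma^{-1}E_0((0,a)),
\]
where the second term acts as multiplication by $(1+\abs{\log\lambda})^\gamma$ on $\Ran E_0((0,a))$. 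I would prove compactness of the two operators $GR_0(i)E_0(\R\setminus(0,a))$ and $GR_0(i)X_\gamma^{-1}E_0((0,a))$ separately and conclude by adding them.

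For the first operator the naive factorization suffices. Write $GR_0(i)=\bigl(G(\abs{H_0}+I)^{-1/2}\bigr)(\abs{H_0}+I)^{1/2}R_0(i)$; by Assumption~\ref{as1}(B), i.e.\ \eqref{a3}, the first factor is compact, while on $\Ran E_0(\R\setminus(0,a))$ the second factor is the function $(\abs{x}+1)^{1/2}(x-i)^{-1}$ of $H_0$, which is bounded on all of $\R$ (no logarithm enters here since $\omega_\gamma\equiv1$ on this set). A compact operator times a bounded one is compact.

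The second operator is where the strong $H_0$-smoothness is essential. Since $(0,a)\subset\Delta$, the representation \eqref{a3b}, \eqref{a3B} applies. On $\Ran E_0((0,a))$ both $R_0(i)$ and $X_\gamma^{-1}$ act as multiplications, so $R_0(i)X_\gamma^{-1}E_0((0,a))=\mu(H_0)$ with $\mu(\lambda)=(1+\abs{\log\lambda})^\gamma(\lambda-i)^{-1}$ supported on $(0,a)$. Using \eqref{a3B} in the spectral representation $\calF_a$ of Section~\ref{sec.c}, the operator $G\mu(H_0)$ becomes the integral operator
\[
f\longmapsto\int_0^a Z(\lambda)^*\mu(\lambda)f(\lambda)\,d\lambda,\qquad f\in L^2((0,a),\calN),
\]
so it remains to show that this operator is compact. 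Its kernel obeys $\int_0^a\norm{Z(\lambda)^*\mu(\lambda)}^2\,d\lambda<\infty$, because $\norm{Z(\lambda)}\leq C$ by \eqref{a3a} and $\abs{\mu(\lambda)}\leq(1+\abs{\log\lambda})^\gamma$ (as $\abs{\lambda-i}\geq1$), the logarithmic weight being square-integrable near $0$. To pass from boundedness to compactness I would approximate: for $\epsilon>0$ the tail $\int_0^\epsilon$ has operator norm at most $\bigl(\int_0^\epsilon\norm{Z(\lambda)}^2\abs{\mu(\lambda)}^2\,d\lambda\bigr)^{1/2}\to0$, while on $[\epsilon,a]$ the map $\lambda\mapsto Z(\lambda)^*$ is norm-continuous into the compact operators by \eqref{a3a}; its image is then a norm-compact set of compact operators, so it can be uniformly approximated by step functions with finite-rank values, and the associated integral operators are finite rank. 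Hence $\int_\epsilon^a Z(\lambda)^*\mu(\lambda)f(\lambda)\,d\lambda$ is compact, and letting $\epsilon\to0$ exhibits the whole operator as a norm limit of compact operators.

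The main obstacle is precisely the logarithmic blow-up of $X_\gamma^{-1}$ at $\lambda=0$: on $(0,a)$ the function $(\abs{x}+1)^{1/2}(1+\abs{\log x})^\gamma(x-i)^{-1}$ is unbounded, so the factorization through $(\abs{H_0}+I)^{-1/2}$ that handled the first operator fails here. The smoothness hypothesis resolves this, since it trades $G$ for the uniformly bounded, norm-continuous compact family $Z(\lambda)$ and simultaneously turns the spectral weight into an $L^2(d\lambda)$ kernel, where a merely logarithmic (hence Lebesgue-square-integrable) singularity is harmless.
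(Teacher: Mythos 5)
Your proof is correct and follows essentially the same route as the paper: the same splitting into $GR_0(i)E_0(\R\setminus(0,a))$ (compact by \eqref{a3}) and the part supported on $(0,a)$, which via \eqref{a3B} becomes the integral operator with kernel $(1+\abs{\log\lambda})^{\gamma}Z(\lambda)^*(\lambda-i)^{-1}$, compact because $Z(\cdot)$ is a norm-continuous family of compact operators and the logarithmic weight is square-integrable near $0$. Your $\epsilon$-cutoff and finite-rank approximation merely spell out the final step that the paper leaves implicit.
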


\begin{proof} 
By the definition \eqref{c8} of $X_\gamma$, we have
 to prove the compactness of the two operators
\begin{equation}
G R_0(i) \calF_a^*(X_\gamma^{(1)})^{-1}\calF_a E_0((0,a))
\text{ and }
GR_0(i)E_0(\R\setminus(0,a)).
\label{d17}
\end{equation}
The second operator is compact by   assumption \eqref{a3}. Consider the first one.
Since the operators $H_0$ and 
$X_\gamma$ commute and $\calF_a$ is unitary, 
it suffices to prove the compactness of the operator
$G_a \calF_a^*(X_\gamma^{(1)})^{-1}: L^2((0,a),\calN)\to  \calK$.
According to formula \eqref{a3B}
this operator acts    as
\begin{equation}
G_a\calF_a^*(X_\gamma^{(1)})^{-1}f
=
\int_0^a (1+\abs{\log x})^\gamma Z(x)^*f(x)dx,
\quad
f\in\mathcal D.
\label{d16}
\end{equation}
By the strong smoothness assumption the operator 
$Z(x): \calK\to \calN$ is compact and depends  continuously on $x$.
From here and the fact that 
$(1+\abs{\log x})^\gamma$ 
is in $L^2((0,a),dx)$, the required statement follows. 
\end{proof}

Using the above lemma, 
we immediately obtain that
 for all $\lambda<0$ the operators
$GR_0(\lambda)E_0(\R_+)X_\gamma^{-1}$  
defined initially on 
$\mathcal D$ extend  to  compact operators from $\calH$ to $\calK$.

\begin{lemma}\label{lma.d1a}
Under Assumption~\ref{as1} for any $\gamma>0$ we have:
\begin{enumerate}[\rm (i)]

\item
$\norm{GR_0(\lambda)E_0((a,\infty))X_\gamma^{-1}}=O(1)$, 
as $\lambda\to-0$;

\item
$\norm{GR_0(\lambda)E_0(\R_+)X_\gamma^{-1}}
=O(\abs{\lambda}^{-1/2}\abs{\log\abs{\lambda}}^\gamma)$,
as $\lambda\to-0$.
\end{enumerate}
\end{lemma}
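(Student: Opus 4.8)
The plan is to reduce both estimates to the spectral representation of $H_0$ furnished by $\calF_a$, exactly as in the proof of Lemma~\ref{lma.d1}, and then to separate the ``high-energy'' region $(a,\infty)$, where $X_\gamma^{-1}$ acts trivially, from the ``low-energy'' region $(0,a)$, where the logarithmic weight and the singularity of $R_0(\lambda)$ at $\lambda=0$ interact. On $(a,\infty)$ I would gain control from the factor $G(\abs{H_0}+I)^{-1/2}$, which is bounded by \eqref{a3}; on $(0,a)$ everything reduces to a single scalar integral whose dependence on $\abs{\lambda}$ I would estimate by rescaling.

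For part (i), since $\omega_\gamma\equiv1$ on $[a,\infty)$ by \eqref{omega}, the weight is invisible: $E_0((a,\infty))X_\gamma^{-1}=E_0((a,\infty))$. I would then factor
\[
GR_0(\lambda)E_0((a,\infty))=\bigl(G(\abs{H_0}+I)^{-1/2}\bigr)\bigl((\abs{H_0}+I)^{1/2}R_0(\lambda)E_0((a,\infty))\bigr).
\]
The first factor is bounded by \eqref{a3}. The second is the function $(\abs{x}+1)^{1/2}(x-\lambda)^{-1}$ of $H_0$ cut to $x>a$; for $\lambda\in(-1,0)$ one has $x-\lambda\geq x>a>0$, and the supremum of $(x+1)^{1/2}(x-\lambda)^{-1}$ over $x>a$ stays bounded uniformly in such $\lambda$ (it decays like $x^{-1/2}$ at infinity and is controlled by $(a+1)^{1/2}/a$ near $x=a$). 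This yields the $O(1)$ bound.

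For part (ii), I would write $E_0(\R_+)=E_0((0,a))+E_0((a,\infty))$; the contribution of $E_0((a,\infty))$ is $O(1)$ by part (i). For the remaining piece, since $R_0(\lambda)$, $X_\gamma$ and $E_0((0,a))$ are all functions of $H_0$ they commute, so $GR_0(\lambda)E_0((0,a))X_\gamma^{-1}=G_aR_0(\lambda)X_\gamma^{-1}$. Passing to the spectral representation through $\calF_a$ and using formula \eqref{a3B} exactly as in \eqref{d16}, this operator becomes
\[
f\longmapsto\int_0^aZ(x)^*\,\frac{(1+\abs{\log x})^\gamma}{x-\lambda}\,f(x)\,dx,\qquad f\in L^2((0,a),\calN).
\]
An operator of the form $Af=\int_0^aK(x)f(x)\,dx$ satisfies $\norm{A}^2=\Norm{\int_0^aK(x)K(x)^*dx}\leq\int_0^a\norm{K(x)}^2dx$, so with $\norm{Z(x)}\leq C$ from \eqref{a3a} I obtain
\[
\norm{G_aR_0(\lambda)X_\gamma^{-1}}^2\leq C^2\int_0^a\frac{(1+\abs{\log x})^{2\gamma}}{(x-\lambda)^2}\,dx.
\]

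The crux of the argument, and the only genuinely computational step, is estimating this scalar integral as $\lambda\to-0$. Writing $\lambda=-\varepsilon$ and rescaling $x=\varepsilon s$, the integral becomes $\varepsilon^{-1}\int_0^{a/\varepsilon}(1+\abs{\log\varepsilon+\log s})^{2\gamma}(s+1)^{-2}\,ds$. Using $(A+B)^{2\gamma}\leq C_\gamma(A^{2\gamma}+B^{2\gamma})$ with $A=1+\log(1/\varepsilon)$ and $B=\abs{\log s}$, the bulk term $(1+\log(1/\varepsilon))^{2\gamma}\int_0^\infty(s+1)^{-2}ds$ produces the leading $\varepsilon^{-1}(\log(1/\varepsilon))^{2\gamma}$, while the remainder $\int_0^\infty\abs{\log s}^{2\gamma}(s+1)^{-2}ds$ is a finite $\varepsilon$-independent constant, since it converges both at $s=0$ and at $s=\infty$. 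Hence the integral is $O\bigl(\abs{\lambda}^{-1}\abs{\log\abs{\lambda}}^{2\gamma}\bigr)$, and taking square roots gives the bound $O(\abs{\lambda}^{-1/2}\abs{\log\abs{\lambda}}^\gamma)$ claimed in (ii). I expect the bookkeeping in this rescaling---in particular checking that the large-$s$ part up to $s=a/\varepsilon$ does not spoil the estimate---to be the main point requiring care, although it is elementary.
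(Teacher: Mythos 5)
Your proposal is correct and follows essentially the same route as the paper's proof: the trivial action of $X_\gamma^{-1}$ on $\Ran E_0((a,\infty))$ plus boundedness of $G$ against a power of $H_0$ for part (i), and for part (ii) the reduction via $\calF_a$ and \eqref{a3B} to the vector-valued integral operator whose norm is controlled by $\bigl(\int_0^a(1+\abs{\log x})^{2\gamma}(x-\lambda)^{-2}dx\bigr)^{1/2}$. The only difference is cosmetic: the paper factors through $GR_0(i)$ rather than $G(\abs{H_0}+I)^{-1/2}$ in (i), and simply asserts the scalar integral bound that you verify explicitly by rescaling.
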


\begin{proof}
(i) 
Since (in view of \eqref{omega})
$$
GR_0(\lambda)E_0((a,\infty))X_\gamma^{-1}
=GR_0(i)(H_0-i)R_0(\lambda)E_0((a,\infty))
$$
and the operator $GR_0(i)$ is bounded,  the required
statement follows from the trivial estimate
$$
\norm{ (H_0-i)R_0(\lambda)E_0((a,\infty))}\leq C,
\quad \forall \lambda<0. 
$$

(ii)
It follows   from \eqref{c8} that  
the problem reduces (cf. \eqref{d17}) to estimating the norms of the  two operators:
$$
\text{ $G_aR_0(\lambda)\calF_a^*(X_\gamma^{(1)})^{-1}$
and 
$GR_0(\lambda)E_0((a,\infty))$.}
$$
The norm of the second operator has already been estimated in (i).
Consider the first operator. According  to   \eqref{a3B}
this operator acts  from  $L^2((0,a),\calN)$  to $\calK$ as
$$
G_aR_0(\lambda)\calF_a^*(X_\gamma^{(1)})^{-1}f
=
\int_0^a
\frac{(1+\abs{\log x})^\gamma}{x-\lambda} Z(x)^* f(x)dx,
\quad
f\in\mathcal D, \quad \lambda<0.
$$
The norm of this operator can be explicitly estimated: 
\begin{multline*}
\Norm{ \int_0^a
\frac{(1+\abs{\log x})^\gamma}{x-\lambda} Z(x)^* f(x)dx}
\leq 
\norm{f}
\left(\int_0^a \frac{(1+\abs{\log x})^{2\gamma}}{(x-\lambda)^2}
\norm{Z(x)}^2dx\right)^{1/2}
\\
\leq C\norm{f}
\left(\int_0^a \frac{(1+\abs{\log x})^{2\gamma}}{(x-\lambda)^2}
dx\right)^{1/2}
\leq
C_1\norm{f}\abs{\lambda}^{-1/2}\bigl| \log\abs{\lambda}\bigr|^\gamma,
\end{multline*}
for all $\lambda<0$, and the required statement follows. 
\end{proof}


\subsection{Compactness properties of  auxiliary operators}

\begin{lemma}\label{lma.d2}
Let Assumption~\ref{as1} hold true. 
Then for any $\varphi\in C_0^\infty(\R)$ 
and any $\gamma>0$  the operator 
$$
X_\gamma^{-1}(\varphi(H)-\varphi(H_0))
$$
is compact. 
\end{lemma}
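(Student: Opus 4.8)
The plan is to represent $\varphi(H)-\varphi(H_0)$ by an almost analytic (Helffer--Sj\"ostrand) formula, insert the resolvent identity \eqref{res}, and thereby reduce everything to the compactness already secured in Lemma~\ref{lma.d1}. First I would fix an almost analytic extension $\wt\varphi\in C_0^\infty(\C)$ of $\varphi$, with $\wt\varphi|_\R=\varphi$ and $\abs{\partial_{\bar z}\wt\varphi(z)}\le C_N\abs{\Im z}^N$ for every $N$, and use the norm-convergent representation
\[
\varphi(H)-\varphi(H_0)=-\frac1\pi\int_\C \partial_{\bar z}\wt\varphi(z)\,\bigl(R(z)-R_0(z)\bigr)\,dx\,dy,\qquad z=x+iy,
\]
valid for the (possibly unbounded) self-adjoint operators $H$, $H_0$. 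By \eqref{res} the integrand factorizes as $R(z)-R_0(z)=-R_0(z)G^*JGR(z)$, so, tested on the dense set $\mathcal D$ of \eqref{b1a},
\[
X_\gamma^{-1}\bigl(\varphi(H)-\varphi(H_0)\bigr)=\frac1\pi\int_\C \partial_{\bar z}\wt\varphi(z)\,\bigl[X_\gamma^{-1}R_0(z)G^*\bigr]\,J\,\bigl[GR(z)\bigr]\,dx\,dy .
\]

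The crux is to control the two bracketed resolvent factors. For the first I would use that $X_\gamma^{-1}$ is a self-adjoint function of $H_0$ (see \eqref{omega}), so $X_\gamma^{-1}R_0(z)G^*=\bigl(GR_0(\bar z)X_\gamma^{-1}\bigr)^*$; writing $R_0(\bar z)=R_0(i)(H_0-i)R_0(\bar z)$ and commuting $X_\gamma^{-1}$ past the factor $(H_0-i)R_0(\bar z)$ (all functions of $H_0$) gives
\[
GR_0(\bar z)X_\gamma^{-1}=\bigl[GR_0(i)X_\gamma^{-1}\bigr]\bigl(I+(\bar z-i)R_0(\bar z)\bigr).
\]
By Lemma~\ref{lma.d1} the bracket extends to a compact operator, while the second factor is bounded with norm $O(\abs{\Im z}^{-1})$ uniformly on the compact $z$-support of $\wt\varphi$; hence $X_\gamma^{-1}R_0(z)G^*$ extends to a compact operator of norm $O(\abs{\Im z}^{-1})$. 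For the second factor I would write $GR(z)=GR(i)\bigl(I+(z-i)R(z)\bigr)$, where $GR(i)$ is bounded by \eqref{a9} and \eqref{a3} and the remaining factor has norm $O(\abs{\Im z}^{-1})$ on the same set.

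Putting these together, the integrand is a norm-continuous family of compact operators (compact $\times$ bounded $\times$ bounded) of norm $O(\abs{\Im z}^{-2})$. Choosing $N=2$ in the estimate on $\partial_{\bar z}\wt\varphi$ makes $\int\abs{\partial_{\bar z}\wt\varphi(z)}\abs{\Im z}^{-2}\,dx\,dy$ finite, so the Bochner integral converges in operator norm and, the compact operators forming a closed subspace, its value is compact. I expect the main obstacle to be purely functional-analytic: since $X_\gamma^{-1}$ is \emph{unbounded}, it cannot be treated as a bounded multiplier but must be absorbed into the compact operator $GR_0(i)X_\gamma^{-1}$ of Lemma~\ref{lma.d1}, and one must check that this closed operator may legitimately be pulled inside the integral. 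This is a short closedness argument carried out on $\mathcal D$, after which the $\abs{\Im z}^{-1}$ growth of each resolvent factor is safely dominated by the $\abs{\Im z}^{2}$ decay of $\partial_{\bar z}\wt\varphi$.
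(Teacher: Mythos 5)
Your proposal is correct and follows essentially the same route as the paper: the Helffer--Sj\"ostrand representation, insertion of the resolvent identity \eqref{res}, the factorization of $GR_0(\bar z)X_\gamma^{-1}$ through the compact operator $GR_0(i)X_\gamma^{-1}$ of Lemma~\ref{lma.d1} together with the boundedness of $GR(i)$, and the resulting $O(\abs{\Im z}^{-2})$ bound dominated by the decay of $\partial_{\bar z}\wt\varphi$. The only cosmetic difference is that the paper records the two resolvent bounds as standalone estimates \eqref{d6}--\eqref{d7} before invoking the integral representation, whereas you interleave them; the substance is identical.
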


\begin{proof}
1. First note that 
$$
X_\gamma^{-1}(GR_0(z))^*=
X_\gamma^{-1}(GR_0(i))^*
+
(\overline{z}+i)X_\gamma^{-1}(GR_0(i)R_0(z))^*,
$$
for $\Im z\not=0$. 
Using Lemma~\ref{lma.d1}, from here we get 
\begin{equation}
\norm{X_\gamma^{-1}(GR_0(z))^*}
\leq 
C
\frac{\abs{z}+1}{\abs{\Im z}}, 
\quad \Im z\not=0.
\label{d6}
\end{equation}
Next, from \eqref{a3} and \eqref{a9} it follows that $GR(i)$ is bounded. 
Therefore,
similarly to \eqref{d6}, we get
\begin{equation}
\norm{GR(z)}\leq C
\frac{\abs{z}+1}{\abs{\Im z}}, 
\quad \Im z\not=0.
\label{d7}
\end{equation}

2. 
We use the technique of functional calculus via the 
almost analytic extension, see e.g. \cite[Section~8]{DiSj}.
Let $\wt \varphi\in C_0^\infty(\C)$ be the almost analytic extension of $\varphi$, 
i.e. $\wt \varphi|_\R=\varphi$ and 
\begin{equation}
\Abs{\frac{\partial \wt \varphi}{\partial \overline z} (z)}
=
O(\abs{\Im z}^k)
\quad \text{as $\Im z\to0$}
\label{d1} 
\end{equation}
for any $k>0$. Then 
\begin{equation}
\varphi(H)
=
\int_{\C}\frac{\partial \wt \varphi}{\partial \overline z} (z) R(z) dL(z), 
\label{d5}
\end{equation}
where $L(z)$ is the Lebesgue measure in $\C$. 
Note that this integral is norm convergent due to \eqref{d1} and 
the trivial estimate $\norm{R(z)}\leq \abs{\Im z}^{-1}$.

3. 
Using the resolvent identity \eqref{res} and the representation \eqref{d5}, we get
$$
X_\gamma^{-1}(\varphi(H)-\varphi(H_0))
=
-
\int_{\C} 
\frac{\partial \wt \varphi}{\partial \overline z} (z)
X_\gamma^{-1} (GR_0(\overline z))^* JGR(z) dL(z).
$$
The integrand in the r.h.s. is compact for any $\Im z\not=0$  by Lemma~\ref{lma.d1}. 
By \eqref{d6}, \eqref{d7} and \eqref{d1}, the integral converges 
in the operator norm. 
From here we get the required statement. 
 \end{proof}

\begin{lemma}\label{lma.d4}
Let part (B) of Assumption~\ref{as1} hold true.  
Then the operator $\psi(H)-\psi(H_0)$ 
is compact for any function $\psi\in C(\R)$ 
such that the limits $\lim_{x\to\pm\infty}\psi(x)$
exist and are finite. 
\end{lemma}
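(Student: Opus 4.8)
The plan is to isolate the only genuine analytic input—compactness of the resolvent difference—and then reach general $\psi$ by an algebra-and-approximation argument, the behaviour at infinity being the delicate part. First I would record that, by the resolvent identity \eqref{res},
\[
R(z)-R_0(z)=-(GR_0(\overline z))^*\,J\,GR(z),
\]
where each $GR_0(\overline z)=\bigl(G(\abs{H_0}+I)^{-1/2}\bigr)(\abs{H_0}+I)^{1/2}R_0(\overline z)$ is compact by \eqref{a3} (compact times bounded) and $GR(z)$ is bounded by \eqref{a9}; hence $R(z)-R_0(z)$ is compact for all $z\in\C\setminus\R$. Then I would consider the set $\mathcal A=\{\psi\in C_b(\R):\psi(H)-\psi(H_0)\text{ is compact}\}$ and check that it is a uniformly closed $*$-subalgebra of $C_b(\R)$: closedness holds because $\psi\mapsto\psi(H)-\psi(H_0)$ is norm continuous into the bounded operators; the algebra property follows from $\psi(H)\phi(H)-\psi(H_0)\phi(H_0)=\psi(H)[\phi(H)-\phi(H_0)]+[\psi(H)-\psi(H_0)]\phi(H_0)$. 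Since $\mathcal A$ contains all resolvents $(\,\cdot-z)^{-1}$, Stone--Weierstrass gives $\mathcal A\supset C_0(\R)$ (this is also what the argument of Lemma~\ref{lma.d2} yields once the factor $X_\gamma^{-1}$ is deleted).

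It remains to treat the two limits at infinity. Writing $\psi_\pm=\lim_{x\to\pm\infty}\psi(x)$ and fixing one reference $\theta$ with $\theta(+\infty)=1$, $\theta(-\infty)=0$, the function $\psi-\psi_--(\psi_+-\psi_-)\theta$ lies in $C_0(\R)$, so everything reduces to showing $\theta\in\mathcal A$ for a single such $\theta$. I would take $\theta(x)=\tfrac12\bigl(1+u(x)\bigr)$ with $u(x)=x(1+x^2)^{-1/2}$, use the representation
\[
u(x)=\int_1^\infty\frac{x}{x^2+t^2}\,d\mu(t),
\qquad
d\mu(t)=\frac{2t}{\pi\sqrt{t^2-1}}\,dt,
\]
together with $\dfrac{H}{H^2+t^2}=\tfrac12\bigl(R(it)+R(-it)\bigr)$, and write
\[
u(H)-u(H_0)=\tfrac12\int_1^\infty\Bigl[\bigl(R(it)-R_0(it)\bigr)+\bigl(R(-it)-R_0(-it)\bigr)\Bigr]d\mu(t).
\]
Each integrand is compact by the first paragraph, and from \eqref{a3}, \eqref{a9} one obtains $\norm{GR_0(\pm it)}=O(t^{-1/2})$ and $\norm{GR(\pm it)}=O(t^{-1/2})$, hence $\norm{R(\pm it)-R_0(\pm it)}=O(t^{-1})$.

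The hard part is the convergence of this integral, and this is where I expect the main obstacle to lie. Because $\psi_+\neq\psi_-$, the measure $\mu$ has infinite mass ($d\mu(t)\sim\tfrac{2}{\pi}\,dt$ as $t\to\infty$), so $\int_1^\infty\norm{R(\pm it)-R_0(\pm it)}\,d\mu(t)$ diverges logarithmically: the non-integrable tail is precisely the contribution of the two distinct limits. One must therefore extract norm convergence to a \emph{compact} limit from cancellation rather than from absolute bounds. I expect this to rest on the fact that $R(\pm it)-R_0(\pm it)=-(GR_0(\mp it))^*JGR(\pm it)$ is not merely small but spectrally localized at energies of order $t$, so that the dyadic blocks $\int_T^{2T}$ act on almost orthogonal spectral bands; the form compactness of $V$ in \eqref{a3} is exactly what stops the high positive-energy and high negative-energy ends from being coupled, which is what renders this ``jump at infinity'' harmless. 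This is to be contrasted with a discontinuity of $\psi$ at a \emph{finite} point of the spectrum, which is genuinely non compact and is the operator $D(\lambda)$ of this paper. A perhaps cleaner route to the same obstruction is via the Cayley transform $U=(H-iI)(H+iI)^{-1}$: here $U-U_0=-2i(R(-i)-R_0(-i))$ is compact, every $f\in C(S^1)$ yields a compact difference $f(U)-f(U_0)$ by Stone--Weierstrass on the circle, and the whole problem collapses to a single universal function on $S^1$ having one jump, located at the point $1$ that is the common image of $+\infty$ and $-\infty$; taming that one jump is the crux.
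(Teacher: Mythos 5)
Your first two steps---compactness of $R(z)-R_0(z)$ from \eqref{res}, \eqref{a3}, \eqref{a9}, the deduction that $\psi(H)-\psi(H_0)$ is compact for all $\psi\in C_0(\R)$, and the reduction of the general case to a \emph{single} reference function $\theta$ with distinct finite limits at $\pm\infty$---are correct and are exactly the reduction made in the paper. But the proof then stops at the one point where something nontrivial has to happen. Your integral representation
$u(H)-u(H_0)=\tfrac12\int_1^\infty\bigl[(R(it)-R_0(it))+(R(-it)-R_0(-it))\bigr]\,d\mu(t)$
is correct, and so is your estimate $\norm{R(\pm it)-R_0(\pm it)}=O(t^{-1})$ with $d\mu(t)\sim\tfrac{2}{\pi}\,dt$; as you yourself observe, the integral therefore fails to converge absolutely in operator norm, and everything hinges on extracting a compact limit from cancellation. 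The passage beginning ``I expect this to rest on the fact that\dots the dyadic blocks act on almost orthogonal spectral bands'' is a heuristic, not an argument: no almost-orthogonality estimate is formulated, no bound on the dyadic blocks is proved, and no mechanism is given by which the $O(\log T)$ worth of mass in $\int_1^T$ is beaten down. The Cayley-transform reformulation at the end does not help either, since the universal function on $S^1$ you arrive at has a jump at $1$ and is therefore \emph{not} in $C(S^1)$, so Stone--Weierstrass gives nothing there---again as you concede. So the crux of the lemma is identified but left unproved; this is a genuine gap.

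For comparison, the paper makes the identical reduction (compactness for $C_0(\R)$ plus one function with unequal limits at $\pm\infty$) and then closes the gap by quoting an external result, Theorem~7.3 of \cite{Push2}, which asserts that under part (B) of Assumption~\ref{as1} the difference $\tan^{-1}(H)-\tan^{-1}(H_0)$ is compact. That theorem is precisely the ``taming of the jump at infinity'' that your sketch is missing, and it is not a soft consequence of the compactness of the resolvent difference: it uses the form-compactness structure $V=G^*JG$ of \eqref{a3} in an essential way. To repair your proof you would either have to cite such a result or supply a genuine proof of the compactness of $u(H)-u(H_0)$ (for instance by a regularized/principal-value treatment of your integral with quantitative almost-orthogonality of the spectral bands, or by a double operator integral argument showing that $\tan^{-1}$, resp.\ your $u$, is operator Lipschitz in the appropriate sandwiched sense).
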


\begin{proof}
As is well known (and can easily  be deduced from the compactness of $R(z)-R_{0}(z)$ for $\Im z\neq 0$),
the operator $\psi(H)-\psi(H_0)$ 
is compact for any function $\psi\in C(\R)$ 
such that $\psi(x)\to0$ as $\abs{x}\to\infty$. 
Therefore, it suffices to prove that $\psi(H)-\psi(H_0)$
is compact for at least one function $\psi\in C(\R)$ 
such that $\lim_{x\to\infty}\psi(x)\not=\lim_{x\to-\infty}\psi(x)$
and both limits exist. 
The latter fact is provided by  \cite[Theorem~7.3]{Push2} where 
it has been proven that if part (B) of Assumption~\ref{as1}
holds true then the difference $\tan^{-1}(H)-\tan^{-1}(H_0)$ is compact. 
\end{proof}

\subsection{Proofs of Lemmas~\ref{lma.b4}, \ref{lma.b5} and \ref{lma.b6}}

\begin{proof}[Proof of Lemma~\ref{lma.b4}]
1. Comparing \eqref{a5} and \eqref{M2}, we see that $M_+ - M_2 =X_\gamma K X_\gamma$, where 
$$
K=X_\gamma^{-1}E_0(\R_+) E((-\infty,-a))E_0(\R_+) X_\gamma^{-1}.
$$
It suffices to prove that 
the operator  
\begin{multline*}
X_\gamma^{-1}E_0(\R_+)E((-\infty,-a))
\\ =
X_\gamma^{-1}E_0((0,a))E((-\infty,-a))
+
X_\gamma^{-1}E_0((a,\infty))E((-\infty,-a))
\end{multline*}
is compact. 
We will prove the compactness of the two terms
in the r.h.s.  separately.

2. Consider the first term. Let $\varphi\in C_0^\infty(\R)$ be
such that $\varphi(x)=1$ for $x\in[0,a]$
and $\varphi(x)=0$ for $x\leq -a$. 
Then 
\[
X_\gamma^{-1}E_0((0,a))E((-\infty,-a))
\\
=X_\gamma^{-1}E_0((0,a))(\varphi(H_0)-\varphi(H))E((-\infty,-a)).
\]
Since the operators $E_0((0,a))$ and $X_\gamma^{-1}$ commute, the r.h.s. is compact by Lemma~\ref{lma.d2}. 

3. Consider the second term. 
Let $\psi\in C(\R)$ be such that 
$\psi(x)=1$ for $x\geq a$ and $\psi(x)=0$
for $x\leq -a$. Then,   using \eqref{omega}, we find that
$$
X_\gamma^{-1} E_0((a,\infty))E((-\infty,-a))
=
E_0((a,\infty))(\psi(H_0)-\psi(H))E((-\infty,-a)),
$$
and the r.h.s. is compact by Lemma~\ref{lma.d4}.
\end{proof}

\begin{proof}[Proof of Lemma~\ref{lma.b5}]

1. 
First we need to obtain an integral representation for $M_2$ 
similar to   \eqref{b1b}. 
By using  Stone's formula
(see e.g. \cite[Theorem~VII.13]{RS1}) 
and the fact 
that the spectra of $H_0$ and $H$ on $[-a,a]$ are 
purely a.c.,
we obtain
for any $f\in\calH$:
\begin{multline*}
(M_2f,f)
=
\bigl((E((-a,0))-E_0((-a,0)))E_0(\R_+)f,E_0(\R_+)f\bigr)
\\
=\frac1\pi  \int_{-a}^0 \lim_{\varepsilon\to+0} 
\Im  ((R(\lambda+i\varepsilon)-R_0(\lambda+i\varepsilon))E_0(\R_+)f,E_0(\R_+)f)
d\lambda.
\end{multline*}
Substituting the resolvent identity \eqref{a7} into this formula and 
using the notation $Y(\lambda)=J(I+T(\lambda+i0)J)^{-1}$, we obtain:
\begin{equation}
(M_2f,f)
=
-\frac1\pi \int_{-a}^0
\bigl((\Im Y(\lambda)) GR_0(\lambda)E_0(\R_+)f,GR_0(\lambda)E_0(\R_+)f\bigr)
d\lambda.
\label{d12}
\end{equation}

2. 
Comparing \eqref{M2} and \eqref{M3} and taking into account \eqref{omega}, 
we find that $ M_2-M_3 =X_\gamma K X_\gamma$, where 
\begin{equation}
K
=
 E_0((a,\infty)) M_2X_\gamma^{-1}
+
X_\gamma^{-1}M_2 E_0((a,\infty)) 
+
 E_0((a,\infty)) M_2 E_0((a,\infty)) .
\label{d8}
\end{equation}
Since $X_\gamma$ is a bounded operator, 
it suffices to check the compactness of the first    operator in the r.h.s.
By \eqref{d12}, it can be represented as 
\begin{equation}
-\frac1\pi \int_{-a}^0 
(G R_0(\lambda) E_0((a,\infty)))^*
\Im Y(\lambda) 
G R_0(\lambda) E_0(\R_+) X_\gamma^{-1} d\lambda,
\label{D8}\end{equation}
where \emph{a priori} the integral converges weakly on the dense set $\mathcal{D}$. 
Applying  Lemma~\ref{lma.d1a}, we see that the norm of integrand in \eqref{D8} is bounded by
\[
\| G R_0(\lambda) E_0((a,\infty)) \|\,
\|\Im Y(\lambda) \| \,
\|G R_0(\lambda) E_0(\R_+) X_\gamma^{-1}\|\leq C \abs{\lambda}^{-1/2}\abs{\log\abs{\lambda}}^\gamma. 
\]
Hence the integral in \eqref{D8} converges
 actually  in the operator norm. 
By Lemma~\ref{lma.d1}, the integrand is compact 
for all $\lambda<0$. 
Thus, the above integral is compact, as required. 
\end{proof}


\begin{proof}[Proof of Lemma~\ref{lma.b6}]
Similarly to  \eqref{d12}, 
we have the representation
$$
(M_3f,f)=
-\frac1\pi \int_{-a}^0
\bigl((\Im Y(\lambda)) G R_0(\lambda) E_0((0,a))f, 
G R_0(\lambda) E_0((0,a))f\bigr) \, d\lambda.
$$
Thus, recalling the definition \eqref{b1b} of $M_4$
and setting $\widetilde Y(\lambda)=\Im (Y(\lambda)-Y(0))$, we get
$M_3-M_4=X_\gamma K X_\gamma$, where
\begin{multline}
(Kf,f)
\\
=-\frac1\pi \int_{-a}^0 
(\widetilde Y(\lambda)
G R_0(\lambda) E_0((0,a))X_\gamma^{-1}f,  G R_0(\lambda) E_0((0,a))X_\gamma^{-1}f)
\, d\lambda,\quad f\in \mathcal D.
\label{d3}
\end{multline}
Since $Y(\lambda)$ is H\"older 
continuous, we have
$
\norm{\widetilde Y(\lambda) }\leq C\abs{\lambda}^\beta$, 
$\beta>0$.
Combining this with the estimate of Lemma~\ref{lma.d1a}(ii), we 
see that 
$$
\int_{-a}^0 
\norm{\widetilde Y(\lambda)}\norm{G R_0(\lambda) E_0((0,a))X_\gamma^{-1}}^2
\, d\lambda
<\infty.
$$
Recalling Lemma~\ref{lma.d1}, 
we obtain that the operator $K$ is compact. 
This result also shows that the operator $M_4$ is bounded. 
\end{proof}

\subsection{Proof of Lemma~\ref{lma.b7}}
First we need the following simple auxiliary statement. 
\begin{lemma}\label{lma.d6}
Let $p>q>0$. Then the operator $K$ in $L^2(0,a)$
with the integral kernel
$$
K(x,y)
=
(1+\abs{\log x})^{-p} (x +y)^{-1}  (1+\abs{\log y})^{q}
$$
is compact.
\end{lemma}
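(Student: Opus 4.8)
The plan is to realize $K$ as a norm limit of Hilbert--Schmidt operators, the obstruction being concentrated at the origin. Note first that $K$ is \emph{not} itself Hilbert--Schmidt: the factor $(x+y)^{-1}$ is not square--integrable near $(0,0)$ and the logarithmic weights are too weak to cure this. Writing $K=\pi A\calC_a B$, where $A$ and $B$ are the operators of multiplication by $(1+\abs{\log x})^{-p}$ and $(1+\abs{\log y})^{q}$ and $\calC_a$ is the half-Carleman operator, we see that $\calC_a$ is bounded but not compact while $B$ is unbounded; hence compactness can only come from the interplay of the two weights near $x=0$, where the assumption $p>q$ produces a net decay.

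Fix $\varepsilon\in(0,\min(a,1))$ and let $P_\varepsilon$, $Q_\varepsilon=I-P_\varepsilon$ be multiplication by the characteristic functions of $(0,\varepsilon)$ and $(\varepsilon,a)$. I decompose
\[
K=P_\varepsilon K P_\varepsilon+\bigl(P_\varepsilon K Q_\varepsilon+Q_\varepsilon K P_\varepsilon+Q_\varepsilon K Q_\varepsilon\bigr).
\]
In each of the last three terms at least one variable stays above $\varepsilon$, so $x+y\geq\varepsilon$ and the factor $(x+y)^{-1}$ is bounded; the only unbounded weight, $(1+\abs{\log y})^{q}$ as $y\to0$, lies in $L^2(0,\varepsilon)$. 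Therefore each of these three kernels is square--integrable on $(0,a)^2$, so the three operators are Hilbert--Schmidt and their sum $K_\varepsilon$ is compact.

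It remains to control the corner $P_\varepsilon K P_\varepsilon$ and to show that $\norm{P_\varepsilon K P_\varepsilon}\to0$ as $\varepsilon\to0$; then $K=\lim_{\varepsilon\to0}K_\varepsilon$ in operator norm and the compactness of $K$ follows. I pass to the logarithmic variable through the unitary operator $U\colon L^2(0,\varepsilon)\to L^2(\sigma,\infty)$, $\sigma=\abs{\log\varepsilon}$, defined by $(Uf)(s)=e^{-s/2}f(e^{-s})$. Using the identity $e^{-(s+t)/2}(e^{-s}+e^{-t})^{-1}=\bigl(2\cosh((s-t)/2)\bigr)^{-1}$, a direct computation shows that $U(P_\varepsilon K P_\varepsilon)U^{-1}$ is the integral operator on $L^2(\sigma,\infty)$ with the nonnegative kernel
\[
\wt K(s,t)=\frac{(1+s)^{-p}(1+t)^{q}}{2\cosh\bigl((s-t)/2\bigr)},
\]
which has a convolution structure in $s-t$ with an exponentially decaying factor.

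Finally I apply Schur's test with weight $1$. The elementary inequality $(1+s)/(1+s')\leq 1+\abs{s-s'}$ for $s,s'\geq0$ together with $\int_{-\infty}^{\infty}(1+\abs{u})^{r}\bigl(2\cosh(u/2)\bigr)^{-1}\,du<\infty$ for every $r$ yields
\[
\sup_{s\geq\sigma}\int_\sigma^\infty\wt K(s,t)\,dt\leq C_q\,(1+\sigma)^{q-p},
\qquad
\sup_{t\geq\sigma}\int_\sigma^\infty\wt K(s,t)\,ds\leq C_p\,(1+\sigma)^{q-p},
\]
the decay being a direct consequence of $q-p<0$. Hence $\norm{P_\varepsilon K P_\varepsilon}=\norm{U(P_\varepsilon K P_\varepsilon)U^{-1}}\leq\sqrt{C_pC_q}\,(1+\sigma)^{q-p}\to0$ as $\varepsilon\to0$, which completes the argument. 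The main obstacle is precisely this corner estimate: because $K$ is not Hilbert--Schmidt, the singularity at the origin cannot be handled crudely, and one must exploit the convolution structure exposed by the logarithmic change of variables in order to see that $p>q$ forces the norm of the corner to tend to zero.
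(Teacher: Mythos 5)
Your proof is correct, and its overall strategy coincides with the paper's: approximate $K$ in operator norm by Hilbert--Schmidt operators supported away from the origin, and show that the residual piece near $x=y=0$ has norm $O\bigl((1+\abs{\log\delta})^{q-p}\bigr)\to0$ by a Schur-type test that exploits $p>q$. The difference is in the execution of that last estimate. The paper cuts only in the $y$-variable (keeping the kernel on the strip $\{y<\delta\}$, with $x$ ranging over all of $(0,a)$) and runs a weighted Cauchy--Schwarz argument directly in the original variables, inserting the factors $\sqrt[4]{x/y}\,\sqrt[4]{y/x}$ --- i.e.\ a Schur test with the power weight $x^{1/4}$, which is needed there to make $\int_0^\infty(x+1)^{-1}x^{-1/2}\,dx$ converge. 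You instead cut in both variables and pass to logarithmic coordinates, under which the corner $P_\varepsilon K P_\varepsilon$ becomes the kernel $(1+s)^{-p}(1+t)^{q}\bigl(2\cosh((s-t)/2)\bigr)^{-1}$ on $L^2(\sigma,\infty)$; the exponential decay of $1/\cosh$ then lets you run Schur with weight $1$, and the inequality $(1+t)/(1+s)\leq 1+\abs{s-t}$ cleanly isolates the decisive factor $(1+\sigma)^{q-p}$. This repackaging makes the convolution structure and the role of $p>q$ more transparent, at the cost of verifying the unitary conjugation identity; the paper's version avoids the change of variables but hides the same mechanism inside the choice of Schur weight. One minor inaccuracy in your preamble: the blanket claim that $K$ is never Hilbert--Schmidt is false --- a computation along the diagonal shows the kernel is square-integrable precisely when $p-q>1/2$ --- but this is only a motivational aside and does not affect the argument, which works uniformly for all $p>q>0$.
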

The proof is given in the Appendix.

\begin{proof}[Proof of Lemma~\ref{lma.b7}]
1. First recall the definitions \eqref{c8} of $M $ and $X_\gamma$ and \eqref{b1b} of $M_{4}$. 
Next, note that both $M $ and $M_4$ vanish on $\calH_a^\bot$. 
Thus, applying a  unitary transformation $\calF_a$,  it suffices 
to prove that the operator 
$M_1-\calF_a M_4\calF_a^*$ in $L^2((0,a),\calN)$ 
can be represented as $X_\gamma^{(1)} K X_\gamma^{(1)}$
with a compact operator $K$. 

2. Consider  $M_1$ and $\calF_a M_4\calF_a^*$ as integral operators 
in $L^2((0,a),\calN)$. Set $Q=-\pi \Im Y(0)$. It follows from  the representations
\eqref{a6}, \eqref{uzuz1} that
$$
\Gamma=
-\pi Z(0)\Im Y(0) Z(0)^*
=
Z(0)QZ(0)^*.
$$ 
Therefore formula \eqref{uzu} shows that
the  integral kernel of $M_1$ can be represented 
as 
$$
M_1 (x,y)=\calC_a^2(x,y) Z(0)Q Z(0)^*
$$
where $\calC_a^2(x,y)$ is defined by \eqref{uzu}.
Next, it follows from \eqref{a3B} that
$$
G R_0(\lambda)E_0((0,a))\calF_a^*  f
=
\int_0^a \frac{1}{x-\lambda}Z(x)^* f(x)dx, 
\quad \lambda<0.
$$
From here and the definition \eqref{b1b} of $M_4$ it is clear that 
the integral kernel of 
$\calF_a M_4 \calF_a^*$ is
\begin{align*}
(\calF_a M_4 \calF_a^*)(x,y)
&=
\frac1{\pi^2} \int_{-a}^0 Z(x) \frac1{x-\lambda}Q \frac1{y-\lambda} Z(y)^*\, d\lambda
\\
&=
\calC_a^2(x,y) Z(x) Q Z(y)^*.
\end{align*}
Using the definition \eqref{c12}, \eqref{uzuz} of $X_\gamma^{(1)}$,
let us represent the integral kernel of the difference
$$
(X_\gamma^{(1)})^{-1}(M_1 -\calF_a M_4 \calF_a^*)(X_\gamma^{(1)})^{-1}
$$
as
\begin{multline*}
\calC_a^2(x,y)(\omega_\gamma(x)\omega_\gamma(y))^{-1}(Z(x)-Z(0)) Q Z(y)^*
\\
+
\calC_a^2(x,y)(\omega_\gamma(x)\omega_\gamma(y))^{-1} Z(0) Q (Z(y)^*-Z(0)^*)
\end{multline*}
where $\omega_\gamma(x)= (1+\abs{\log x})^{-\gamma}$.

3. Let us prove that the first kernel represents a compact operator; 
the second kernel is considered in the same way. 
 We have
\begin{multline}
\calC_a^2(x,y)(\omega_\gamma(x)\omega_\gamma(y))^{-1}(Z(x)-Z(0)) Q Z(y)^*
\\
=
\frac1{\pi^2}\int_0^a \omega_\gamma(x)^{-1}(Z(x)-Z(0))\frac1{x+t} Q\frac1{t+y} Z(y)^* \omega_\gamma(y)^{-1} dt.
\label{d4}
\end{multline}
Choose $\sigma>\gamma$. 
The above  formula defines a factorization of the operator with integral kernel \eqref{d4}
as $K_1K_2$, where 
\begin{align*}
K_1:L^2((0,a),\calK)& \to L^2((0,a),\calN),
\\
K_2:L^2((0,a),\calN)& \to L^2((0,a),\calK)
\end{align*}
are the integral operators with the kernels
\begin{align*}
K_1(x,t)&=\frac1{\pi } \omega_\gamma(x)^{-1}(Z(x)-Z(0)) \frac1{x+t} \omega_\sigma(t)^{-1} Q,
\\
K_2(t,y)&= \frac1{\pi } \omega_\sigma(t) \frac1{t+y} Z(y)^* \omega_\gamma(y)^{-1}.
\end{align*}
Using Lemma~\ref{lma.d6} and the fact that $Z(y)^*$ is compact for all $y$, 
we see that the operator $K_2$ is compact. 
  Since $\norm{Z(x)-Z(0)}<C\abs{x}^\alpha$,   similar arguments yield
  the compactness of $K_1$. 
\end{proof}

\appendix
\section{}

Here we prove three elementary statements.

\begin{lemma}\label{lma.app}
Let Assumption~\ref{as1} hold true. 
Then the spectrum of $H$ on $\Omega$
is purely a.c.
\end{lemma}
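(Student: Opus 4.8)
The plan is to prove the sharper statement that the singular subspace $\calH^{(s)}=\Ran(I-P^{(a)})$ of $H$ satisfies $E(\Omega)\calH^{(s)}=\{0\}$; since the point spectrum is part of the singular spectrum, this gives purely a.c. spectrum on $\Omega$ in one stroke. The engine is the limiting absorption principle for $H$ together with the fact that $H$-smooth operators kill singular vectors. First I would record the sandwiched resolvent. Applying $G$ to the resolvent identity \eqref{res} and using $G(GR_0(\overline z))^*=GR_0(z)G^*=T(z)$ gives $(I+T(z)J)GR(z)=GR_0(z)$, hence
\begin{equation}
GR(z)G^*=(I+T(z)J)^{-1}T(z).
\label{eq.lap}
\end{equation}
By Proposition~\ref{pr1}, for $\lambda\in\Omega$ the operator $(I+T(\lambda+i0)J)^{-1}$ is bounded and $T(z)$ is H\"older continuous up to the real axis on ${\rm int}(\Delta)$; therefore the right-hand side of \eqref{eq.lap} extends continuously to $\Im z\geq 0$, $\Re z\in\Omega$. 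By the standard correspondence between the limiting absorption principle and Kato smoothness (see \cite[Chapter~4]{Yafaev}), this shows that $G$ is $H$-smooth on every compact subinterval $\Lambda\subset\Omega$.

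Next I would use smoothness to locate the singular subspace. Local $H$-smoothness of $G$ on $\Lambda$ yields $GE(\Lambda)(I-P^{(a)})=0$, that is $GE(\Lambda)f=0$ for every $f\in\calH^{(s)}$; exhausting $\Omega$ by such $\Lambda$ gives $E(\Omega)\calH^{(s)}\subset\Ker G$. Now fix $f\in\calH^{(s)}$ with $E(\Lambda)f=f$ for some compact $\Lambda\subset\Omega$, and let $\calM$ be the cyclic subspace generated by $f$ under $H$, i.e. the smallest closed $H$-invariant subspace containing $f$. Since every bounded Borel function $\varphi(H)$ commutes with $E(\Lambda)$ and preserves $\calH^{(s)}$, each vector $\varphi(H)f$ lies in $E(\Lambda)\calH^{(s)}$; hence $\calM\subset\Ker G$, and in particular $V=G^*JG$ vanishes on $\calM$.

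The decisive step is to show $H$ and $H_0$ coincide on $\calM$. As $\calM$ is invariant under $R(z)$ and $\calM\subset\Ker G$, we have $GR(z)g=0$ for all $g\in\calM$, so the resolvent identity \eqref{res} collapses to $R(z)g=R_0(z)g$ for $g\in\calM$, $\Im z\neq 0$. Consequently $R_0(z)$ preserves $\calM$ and $H|_\calM=H_0|_\calM$; in particular the spectral measures of $f$ relative to $H$ and to $H_0$ coincide. This common measure is supported in $\overline\Lambda\subset\Delta$, where $H_0$ is purely a.c. by Assumption~\ref{as1}(A), so it is absolutely continuous. On the other hand, $f\in\calH^{(s)}$ forces it to be singular. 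A measure that is simultaneously absolutely continuous and singular vanishes, so $f=0$; thus $E(\Omega)\calH^{(s)}=\{0\}$, as required.

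I expect the main obstacle to be the rigorous passage from the boundary-value bound \eqref{eq.lap} to genuine $H$-smoothness of $G$ on $\Lambda$, and the local form of the implication ``smooth $\Rightarrow$ annihilates singular vectors''; both are standard but require quoting the precise smoothness calculus of \cite{Yafaev}. Once these are in place, the collapse of the resolvent identity on the cyclic subspace $\calM$ and the absolutely-continuous-versus-singular dichotomy are routine, and notably the argument nowhere uses $\Ker G=\{0\}$.
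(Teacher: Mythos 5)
Your argument is correct, but it takes a genuinely different route from the paper's. The paper's proof in the Appendix is direct: for each component interval $\Delta_n$ of $\Omega$ it exhibits a dense set of vectors (those in $\Ran E_0(\R\setminus\Delta_n)$ together with those $f$ with $\calF f\in C_0^\infty(\Delta_n,\calN)$) for which $(R(z)f,f)$, expanded via \eqref{a7}, stays bounded on rectangles abutting the real axis; the analytic input is Privalov's theorem applied to the H\"older continuous densities supplied by the strong smoothness bound \eqref{a3a}, and neither the smoothness calculus for $H$ nor any discussion of $\Ker G$ is needed. You instead run the standard route --- uniform boundedness of $(I+T(z)J)^{-1}T(z)$ up to the real axis gives local $H$-smoothness of $G$, hence $GE(\Lambda)(I-P^{(a)})=0$ --- which is exactly the argument the paper says is ``usually formulated under the additional assumption $\Ker G=\{0\}$'', and you then remove that assumption by your cyclic-subspace device: on the $H$-cyclic subspace generated by a singular vector supported in a compact $\Lambda\subset\Omega$, the resolvent identity \eqref{res} collapses to $R(z)=R_0(z)$, so $H$ coincides with $H_0$ there and the purely a.c.\ character of $H_0$ on $\Delta$ forces the vector to vanish. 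This reduction is sound; the only point worth writing out explicitly is that $GE(\Lambda)$ is a bounded operator (which follows from \eqref{a9} and the boundedness of $\Lambda$), so that membership in $\Ker G$ survives the closure defining the cyclic subspace, and that both boundary values $T(\lambda\pm i0)$ enter the smoothness criterion (invertibility of $I+T(\lambda-i0)J$ for $\lambda\in\Omega$ follows from that of $I+T(\lambda+i0)J$ by taking adjoints). As for what each approach buys: the paper's is shorter and self-contained, needing only Privalov and the explicit dense set; yours is more modular and conceptual, isolating precisely why a nontrivial $\Ker G$ is harmless --- the perturbation is literally invisible on the offending subspace --- and the same device would remove the kernel assumption from other statements of this kind.
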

\begin{proof}
Let  $\Delta_{n } = (a_{n}, b_{n})$ be one of the component intervals of the open set $\Omega$. 
It suffices to prove that for every $\varepsilon>0$ and  a dense set of elements $f\in\calH$, 
the function $(R(z)f,f)$ is  bounded on the set  
$$
\Pi_{n,\varepsilon}:=\{z\in\C\mid \Re z\in[ a_{n}+\varepsilon, b_{n}-\varepsilon],\  \Im z\in(0,1)\}.
$$
Using \eqref{a7}, write 
\begin{equation}
(R(z)f,f)=(R_0(z)f,f)
+
(J(I+T(z)J)^{-1} GR_0(z)f,GR_0(z)f).
\label{A1}
\end{equation}
Evidently,  the norms of $(I+T(z)J)^{-1}$ are uniformly 
bounded 
for all $z\in\Pi_{n,\varepsilon}$. 
For $f\in\Ran E_0(\R\setminus\Delta_{n})$, 
it is obvious that $R_0(z)f$ and hence the r.h.s. of \eqref{A1} 
is   bounded
for $z\in\Pi_{n,\varepsilon}$. 
Next, let $L_{n}\subset \Ran E_0(\Delta_{n})$
be the set of elements $f$ such that 
$\calF f\in C_{0}^\infty (\Delta_{n},\mathcal{N})$ 
(recall that $\calF$ is defined in \eqref{a8}).  
It is clear that $L_{n}$ is dense  in $\Ran E_0(\Delta_{n})$. 
It follows from \eqref{a8}, \eqref{a3b} that for all $f\in L$ and $g\in \mathcal{H}$
\begin{equation}
(R_0(z)f,f)
=
\int_{{\Delta}_{n}}
\frac{\| (\calF f)(\lambda)\|^2_{\mathcal{N}}}{\lambda-z}d \lambda,
\label{A1a}
\end{equation}
and
\begin{equation}
(GR_0(z)f,g)
=
\int_{{\Delta}_{n}}
\frac{((\calF f)(\lambda), Z(\lambda) g)_{\mathcal{N}} }{\lambda-z}d \lambda.
\label{A1b}
\end{equation}
According to \eqref{a3a},  $((\calF f)(\lambda), Z(\lambda) g)_{\mathcal{N}} $ is a
H\"older continuous function of $\lambda\in {\Delta}_{n}$; 
moreover, the corresponding constant in the definition of H\"older continuity 
is bounded by $C\|g\|$. 
 Therefore, by the Privalov theorem, integral \eqref{A1b} is bounded by $C\|g\|$   
for all $z\in\Pi_{n,\varepsilon}$. Hence the function $\|GR_0(z)f\|$ is     bounded
on $\Pi_{n,\varepsilon}$. Integral \eqref{A1a} is considered in a similar but simpler way.
Thus,  the r.h.s. of \eqref{A1} 
is   bounded on $\Pi_{n,\varepsilon}$.  This proves the required statement.
\end{proof}

\begin{proof}[Proof of Lemma~\ref{lma2}]
We recall that the Legendre function can be expressed in terms of the 
hypergeometric function as 
\begin{equation}
P_\nu(x)=F(-\nu,\nu+1;1;\tfrac{1-x}{2}), \quad \abs{x-1}<2.
\label{c2}
\end{equation}
The hypergeometric function $F(a,b;c; z)$   is defined by the hypergeometric series
\begin{equation}
F(a,b;c;z)=
\sum_{n=0}^\infty \frac{(a)_n (b)_n}{(c)_n} \frac{z^n}{n!},
\quad (a)_n=a(a+1)\cdots(a+n-1).
\label{c9}
\end{equation}
For $\abs{z}<1$, this series is absolutely convergent and
analytic in $a$, $b$, $c$, $z$. 
For $x>1$, formulas (9) and (23) of Section 3.2 of  \cite{BE}  yield the representation
\begin{multline}
P_{-\frac12+it}(x)
=
\frac{\Gamma(-it)}{\sqrt{\pi}\Gamma(\frac12-it)}(2x)^{-\frac12-it}
F(\tfrac14+i\tfrac{t}{2},\tfrac34+i\tfrac{t}{2}; 1+it; x^{-2})
\\
+\frac{\Gamma(it)}{\sqrt{\pi}\Gamma(\frac12+it)}(2x)^{-\frac12+it}
F(\tfrac14-i\tfrac{t}{2},\tfrac34-i\tfrac{t}{2}; 1-it; x^{-2}).
\label{c10}
\end{multline}
Let us split the interval $[1,\infty)$ into $[1,2)$ and $[2,\infty)$. 
For $x\in[1,2)$, we can use \eqref{c2};  then 
$\abs{\frac{1-x}2}<1/2$ and so the hypergeometric
series converges uniformly which shows that 
the estimates \eqref{c6}, \eqref{c7} are trivially true in this range
of $x$. 

For $x\in[2,\infty)$, we can use \eqref{c10} and expand the hypergeometric
function in the r.h.s. in the hypergeometric series. 
The series converges uniformly in $x\in[2,\infty)$. 
Observing that $F(a,b;c;0)=1$ and using  the elementary estimate
$$
\abs{x^{it_1}-x^{it_2}}\leq C(\delta) \abs{t_2-t_1}^\delta (\log x)^\delta, 
$$
we obtain the estimates \eqref{c6}, \eqref{c7} for $x\geq 2$. 
\end{proof}

\begin{proof}[Proof of Lemma~\ref{lma.d6}]
1. For $\delta\in(0,a)$, let $\chi_\delta$ be the characteristic
function of the interval $(0,\delta)$ and let 
$\wt \chi_\delta=1-\chi_\delta$. 
Along with $K$, consider the integral operator $\wt K_\delta$
with the integral kernel $\wt K_\delta(x,y)=K(x,y)\wt \chi_\delta(y)$. 
A direct inspection shows that the kernel $\wt K_\delta(x,y)$ is 
uniformly bounded in $(x,y)\in[0,a]\times[0,a]$ and therefore
the operator $\wt K_\delta$ is in the Hilbert-Schmidt class. 
Thus, it suffices to show that 
\begin{equation}
\norm{K-\wt K_\delta}\to0
\quad \text{as $\delta\to0$.}
\label{d9}
\end{equation}

2. 
Let $K_\delta=K-\wt K_\delta$ and $f,g\in L^2(0,a)$. 
Using Cauchy-Schwartz, we have
\begin{multline}
\abs{(K_\delta f,g)}
\leq
\frac1\pi
\int_0^a dx \int_0^\delta dy
\frac1{x+y}(1+\abs{\log x})^{-p}(1+\abs{\log y})^{q}
\sqrt[4]{\frac{x}{y}}\sqrt[4]{\frac{y}{x}}
\abs{f(y)}\abs{g(x)}
\\
\leq
\frac1\pi
\left(\int_0^a dx \int_0^\delta dy \frac1{x+y} \sqrt{\frac{y}{x}}
\abs{f(y)}^2 \right)^{1/2}
\\
\times
\left(\int_0^a dx \int_0^\delta dy \frac1{x+y} \sqrt{\frac{x}{y}}
\frac{(1+\abs{\log y})^{2q}}{(1+\abs{\log x})^{2p}} \abs{g(x)}^2\right)^{1/2}.
\label{d10}
\end{multline}
Next, we have
$$
\sup_{0<y<a} \int_0^a \frac1{x+y}\sqrt{\frac{y}{x}}dx
\leq
\int_0^\infty \frac1{x+1} \frac1{\sqrt{x}}dx=C<\infty, 
$$
and therefore   the first term in the r.h.s. of \eqref{d10} is  bounded by
$ C\norm{f}$.
In order to estimate the second term in the r.h.s. of \eqref{d10}, 
we first note the elementary estimate
$$
(1+\abs{\log(xy)})
\leq
(1+\abs{\log x})(1+\abs{\log y}).
$$
Using this, we have:
\begin{multline*}
\sup_{0<x<\delta}
\int_0^\delta \frac1{x+y} \sqrt{\frac{x}{y}}
\frac{(1+\abs{\log y})^{2q}}{(1+\abs{\log x})^{2p}} dy
\\
\leq
(1+\abs{\log \delta})^{2q-2p}
\sup_{0<x<\delta}
\int_0^\delta \frac1{x+y} \sqrt{\frac{x}{y}}
\frac{(1+\abs{\log y})^{2q}}{(1+\abs{\log x})^{2q}} dy
\\
=
(1+\abs{\log \delta})^{2q-2p}
\sup_{0<x<\delta}
\int_0^{\delta/x}\frac1{1+t}\frac1{\sqrt{t}}
\frac{(1+\abs{\log ( xt)})^{2q}}{(1+\abs{\log x})^{2q}}dt
\\
\leq
(1+\abs{\log\delta})^{2q-2p}
\int_0^\infty
\frac1{1+t}\frac1{\sqrt{t}}
(1+\abs{\log t})^{2q} dt
=
C(1+\abs{\log\delta})^{2q-2p}.
\end{multline*}
From here we get the estimate for the second term 
in the r.h.s. of \eqref{d10} by $C(1+\abs{\log\delta})^{q-p} \norm{g}$.
Thus, we have
$$
\abs{(K_\delta f,g)}\leq C(1+\abs{\log\delta})^{q-p} \norm{f}\norm{g},
$$
and \eqref{d9} follows.
\end{proof}

\section*{Acknowledgements}
Our collaboration has become possible through the hospitality and
financial support of the Departments of Mathematics of the University of Rennes 1 
and  of King's College London.
The financial support by the European Group of Research SPECT 
is also gratefully acknowledged.

\end{document}